\numberwithin{equation}{section}
\newtheorem{theorem}{Theorem}[section]
\newtheorem{corollary}[theorem]{Corollary}
\newtheorem{proposition}[theorem]{Proposition}
\newtheorem{definition}[theorem]{Definition}
\newtheorem{example}[theorem]{Example}
\newtheorem{remark}[theorem]{Remark}
\begin{document}

	\title[Exterior differential calculus in generalized Lie
	algebras (algebroids) category]
	{ Exterior differential calculus in generalized Lie
		algebras (algebroids) category with applications
		to interior and exterior algebraic (differential)
		systems }

	\bibliographystyle{amsplain}

	\author[Constantin M. Arcu\c{s}]{C. M. Arcu\c{s}}
	\address{ Secondary School "Cornelius
		Radu" Radine\c{s}ti Village, 217196, Gorj County, Romania.} \email{c\_arcus@radinesti.ro}

	\author[Esmaeil Peyghan]{E. Peyghan\\\\Dedicated to Professor Richard S. Palais}
	\address{Department of Mathematics, Faculty of Science, Arak University,
		Arak, 38156-8-8349, Iran.}
	\email{e-peyghan@araku.ac.ir}
	
	\keywords{ (Generalized) Lie algebra (algebroid), exterior and interior
		algebraic (differential) systems, exterior differential calculus.}

	\subjclass[2010]{00A69, 58A15, 58B34.}

	
\begin{abstract}
A new category of Lie algebras, called generalized Lie algebras, is
presented such that classical Lie algebras and Lie-Rinehart algebras are
objects of this new category. A new philosophy over generalized Lie
algebroids theory is presented using the notion of generalized Lie algebra
and examples of objects of the category of generalized Lie algebroids are
presented. An exterior differential calculus on generalized Lie algebras is
presented and a theorem of Maurer-Cartan type is obtained. Supposing that
any submodule (vector subbundle) of a generalized Lie algebra (algebroid) is an interior algebraic (differential) system (IAS (IDS)) for that generalized Lie algebra (algebroid), then the involutivity of the IAS (IDS) in a result of
Frobenius type is characterized. Introducing the notion of exterior
algebraic (differential) system of a generalized Lie algebra (algebroid), the
involutivity of an IAS (IDS) is characterized in a result of Cartan type.
Finally, new directions by research in algebraic (differential) symplectic
spaces theory are presented. 
\end{abstract}

\maketitle

\section{Introduction}

\ \ \ \ 

Throughout this paper a ring is a unitary ring and a module over a ring $\mathcal{F}$ is a left module (except for a commutative ring that we consider on a module as a left and right module). 

Lie groups and Lie algebras (as linearization of Lie groups) have a vast
importance in physics (for example in the classification of elementary
particles) \cite{I}. Lie algebras are related to Lie groups via two
approaches, first by geometrical bridge and second by fiber bundle theory.
Important applications of Lie algebras in physics and mechanics (see \cite%
{SW}) inspired many authors to study these spaces and generalized them to
other spaces such as Lie superalgebras (these spaces are important in
theoretical physics where they are used to describe the mathematics of
supersymmetry) \cite{K0}, affine (Kac-Moody) Lie algebras (these spaces play
an important role in string theory and conformal field theory) \cite{K},
quasisimple Lie algebras \cite{HT}, (locally) extended affine Lie algebras 
\cite{AABGP, MY, N} and invariant affine reflection algebras \cite{N1}.

The basic idea of some branches of mathematics and physics (in particular,
noncommutative geometry) is replacing the space $M$ by some algebra of
functions on it \cite{CO1}. Therefore, in mathematical physics, functions
and fields are more important than the manifolds on which they are defined 
\cite{LM}. Since the set of vector fields on $M$, i.e., $\chi (M)$, plays an
important role in differential geometry and $\chi (M)$ is the same as the
set of all derivations of smooth functions on $M$, i.e., $Der(\mathcal{F}%
(M)) $, then many authors such as Dubois-Violette generalized the algebra of
smooth functions to an arbitrary algebra $A$ and considered the Lie algebra $%
Der(A)$ of all derivations of $A$ as the generalization of the Lie algebra
of smooth vector fields \cite{D}. As the exterior differential operator,
interior product and Lie derivative are defined on $Der(\mathcal{F}(M))$ and
they have basic applications in mathematical physics, then it is important
to introduce these derivatives for $Der(A)$. This approach is well-known as
"generalization of differential calculus from classical differential
geometry to noncommutative geometry" \cite{CO, CO1, D}.

We know that an classical (usual) algebra over a commutative ring $\mathcal{F}$ is a $\mathcal{F}$-module $A$ for which there exists
a bilinear (biadditive and bihomogenous) operation 
\begin{equation*}
\begin{array}{ccc}
A\times A & ^{\underrightarrow{~\ \ [ ,] _{A}~\ \ }} & A \\ 
( u,v) & \longmapsto & [ u,v] _{A}%
\end{array}%
.
\end{equation*}

In this paper we are going to remove the condition of bihomogenity and we
consider an algebra over a ring (not only commutative) $\mathcal{F}$ as
being a $\mathcal{F}$-module $A$ for which there exists a
biaditive operation $[, ]_A:A\times A\rightarrow A$. Obviously an classical
$\mathcal{F}$-algebra is an $\mathcal{F}$-algebra in our direction, but the converse is not true. In the next, we present a Lie algebra over  $\mathcal{F}$ as being a $\mathcal{F}$-algebra $(A, [,]_A)$
such that the biadditive operation $[,]_A$ satisfies

$LA_{1}.$ $[ u,u] _{A}=0,$ for any $u\in A$,

$LA_{2}.$ $[u,[v,z]_{A}]_{A}+[z,[u,v]_{A}]_{A}+[v,[z,u]_{A}]_{A}=0,$ for any 
$u,v,z\in A.$\newline
Using the Lie $\mathcal{F}$-algebra $(Der(\mathcal{%
F}), [,]_{Der(\mathcal{F})})$ of derivations of $\mathcal{F}$ we introduce the notion of generalized Lie $\mathcal{F}$-algebra as being a $\mathcal{F}$-module $A$ such that there
exists a modules morphism $\rho $ from $A$ to $Der(\mathcal{F})$ and a biadditive operation $[,]_{A}:A\times A\rightarrow A$,
satisfying the following condition 
\begin{equation*}
\lbrack u,fv]_{A}=f[u,v]_{A}+\rho (u)(f)\cdot v,~\forall u,v\in A\text{ and }%
f\in \mathcal{F},
\end{equation*}
such that $(A, [,]_{A})$ is a Lie $\mathcal{F}$-algebra.

After presenting our definition of generalized Lie algebra we found that a
similar definition had been exhibited by Palais \cite{P} and Rinehart \cite%
{R}, which is called \textit{Lie d-ring} or \textit{Lie-Rinehart algebra} (see \cite{H, H1, H2} for more detailes). Recall that a pair $(A,\mathcal{F})$ is called a Lie-Rinehart algebra over $%
R $, where $R$ is a commutative and unitary ring, $(\mathcal{F}, [,]_{\mathcal{F}})$ is a commutative classical algebra over $R$, $(A, [,]_{A}) $ is a Lie algebra over $R$ and $A$ is the module over $%
\mathcal{F}$ such that there exists an $\mathcal{F}$-linear Lie algebras morphism $\rho $ from $%
(A, [,]_{A})$ to $(Der(\mathcal{F}), [,]_{Der(\mathcal{F})})$
satisfies in 
\begin{equation*}
\lbrack u,f\cdot v]_{A}=f\cdot \lbrack u,v]_{A}+\rho (u)(f)\cdot v,~\forall
u,v\in A\text{ and }f\in \mathcal{F}.
\end{equation*}%
Note that in the Lie-Rinehart algebra, the means of algebra (respectively,
Lie algebra) is the classical definition of algebra (respectively, Lie algebra).
It is easy to see that a Lie-Rinehart algebra $(A,\mathcal{F})$ is a
generalized Lie algebra over $\mathcal{F}$ (because $(A, [,]_{A})$
is a $\mathcal{F}$-algebra in our approach), but an arbitrary generalized
Lie algebra is not necessary a Lie-Rinehart algebra (see Example \ref{19}).

Using the notion of generalized Lie algebra, in Section 3 we present a new
philosophy over theory of generalized Lie algebroids. Some examples of objects of the
category of generalized Lie algebroids are presented. Using a similar method
used in \cite{A3, A4}, we propose in Section 4, a new point of view over
extension of exterior differential calculus from classical differential
geometry to noncommutative geometry using the notion of generalized Lie
algebra. In particular, using the locally generalized Lie algebras of an
arbitrary generalized Lie algebroid, we obtain an exterior differential
calculus for generalized Lie algebroids.

Using the \emph{Cartan's moving frame method}, there exists the following%
\newline
\textbf{Theorem} (E. Cartan). If $N\in \left\vert \mathbf{Man}%
_{n}\right\vert $ is a Riemannian manifold and $X_{\alpha }=X_{\alpha }^{i}%
\frac{\partial }{\partial x^{i}}$, $\alpha \in \overline{1,n}$ is an
orthonormal moving frame, then there exists a collection of $1$-forms $\Omega
_{\beta }^{\alpha },~\alpha ,\beta \in \overline{1,n}$ uniquely defined by
the requirements%
\begin{equation*}
\Omega _{\beta }^{\alpha }=-\Omega _{\alpha }^{\beta }
\end{equation*}%
\emph{and }%
\begin{equation*}
d^{F}\Theta ^{\alpha }=\Omega _{\beta }^{\alpha }\wedge \Theta ^{\beta
},~\alpha \in \overline{1,n}
\end{equation*}%
where $\left\{ \Theta ^{\alpha },\alpha \in \overline{1,n}\right\} $ is the
coframe (see \cite{Ni}, p. 151).

It is known that an $r$\emph{-dimensional distribution on a manifold }$N$ is
a mapping $\mathcal{D}$ defined on $N,$ which assigns to each point $x$ of $%
N $ an $r$-dimensional linear subspace $\mathcal{D}_{x}$ of $T_{x}N.$ A
vector fields $X$ belongs to $\mathcal{D}$ if we have $X_{x}\in \mathcal{D}%
_{x}$ for each $x\in N.$ When this happens we write $X\in \Gamma ( \mathcal{D%
})$. The distribution $\mathcal{D}$ on a manifold $N$ is said to be \emph{%
differentiable} if for any $x\in N$ there exist $r$ differentiable linearly
independent vector fields $X_{1},\cdots ,X_{r}\in \Gamma ( \mathcal{D}) $ in
a neighborhood of $x.$ The distribution $\mathcal{D}$ is said to be \emph{%
involutive }if for all vector fields $X,Y\in \Gamma ( \mathcal{D}) $ we have 
$[ X,Y] \in \Gamma ( \mathcal{D}) .$

In the classical theory we have the following\newline
\textbf{Theorem} (Frobenius) The distribution $\mathcal{D}$ is involutive if
and only if for each $x\in N$ there exists a neighborhood $U$ and $n-r$
linearly independent $1$-forms $\Theta ^{r+1},\cdots ,\Theta ^{n}$ on $U$
which vanish on $\mathcal{D}$ and satisfy the condition 
\begin{equation*}
d^{F}\Theta ^{\alpha }=\Sigma _{\beta \in \overline{r+1,p}}\Omega _{\beta
}^{\alpha }\wedge \Theta ^{\beta },~\alpha \in \overline{r+1,n},
\end{equation*}%
for suitable $1$-forms $\Omega _{\beta }^{\alpha },~\alpha ,\beta \in 
\overline{r+1,n}$ (see \cite{L}, p. $58$).

In Section 5, we introduce the definition of an interior
algebraic (differential) system (IAS (IDS)) of a generalized Lie
algebra (algebroid) and a characterization of the ivolutivity of an IAS (IDS) in
a result of Frobenius type is presented in Theorem \ref{Frob} and Corollary %
\ref{CFrob}, respectively. In the classical sense, an exterior differential
system (EDS) is a pair $(M,\mathcal{I})$ consisting of a smooth manifold $M$
and a homogeneous, differentially closed ideal $\mathcal{I}$ in the algebra
of smooth differential forms on $M$ (see \cite{BCGGG, G, IL, Ka}). Extending
the classical notion of EDS to notion of exterior algebraic (differential)
system (EAS (EDS)) of a generalized Lie algebra (algebroid), then the
involutivity of an IAS (IDS) in a result of Cartan type is presented in the
Theorem \ref{Cartan} and Corollary \ref{CCartan} respectively. Indeed, in
this section we show that there exists very close links between EAS (EDS) and the noncommutative geometry of generalized Lie algebras (algebroids).
Finally, in Section 6, we present new directions by research in symplectic
noncommutative geometry. 

\section{Generalized Lie algebras}

\ \ \ \ 
In this section, we introduce the generalized Lie $\mathcal{F}$-algebras
category and we present some examples of objects of this category. Also, we
obtain some properties of objects of this new category.
\begin{definition}
If $A$ is a $\mathcal{F}$-module such that there exists an
biaditive operation 
\begin{equation*}
\begin{array}{ccc}
A\times A & ^{\underrightarrow{~\ \ [ ,] _{A}~\ \ }} & A \\ 
( u,v) & \longmapsto & [ u,v] _{A}%
\end{array}
,
\end{equation*}%
then we say that $( A, [ ,] _{A}) $ is a $\mathcal{F}$-algebra or algebra over $\mathcal{F}$.
\end{definition}
If $(A, [,]_A)$ is a $\mathcal{F}$-algebra such that the operation $%
[,]_A$ is associative (commutative), then  $(A, [,]_A)$ is called an
associative (commutative) $\mathcal{F}$-algebra. Moreover, $(A, [,]_A)$ is called a unitary $\mathcal{F}$-algebra, if the
operation $[,]_A$ has a unitary element.
\begin{remark}
In the above definition, if $\mathcal{F}$ is a commutative ring and $[,]_A$ is bilinear, then we have the classical definition
of algebra over a ring. Thus every classical $\mathcal{F}$-algebra is an $\mathcal{F}$-algebra, but the converse is not true. For example if $M$
is a manifold, then $(\chi(M), [,])$, where 
\begin{equation*}
[X, Y](f)=X(Y(f))-Y(X(f)),\ \ \ \forall X, Y\in\chi(M),\ \ \forall f\in \mathcal{F}(M),
\end{equation*}
 is an $\mathcal{F}(M)$-algebra
but it is not a classical $\mathcal{F}(M)$-algebra (only a classical $%
\mathbb{R}$-algebra, where $\mathbb{R}$ is the field of real
numbers).
\end{remark}
\begin{definition}
If $\mathcal{F}$ is a ring, then the set $Der( 
\mathcal{F}) $\ of groups morphisms $X:\mathcal{F\longrightarrow F}$
satisfying the condition 
\begin{equation*}
X( f\cdot g) =X( f) \cdot g+f\cdot X( g) ,~\forall f,g\in \mathcal{F},
\end{equation*}
will be called the set of derivations of  $\mathcal{F}$.
\end{definition}
If $M$ is a manifold, then it is easy to check that $Der( \mathcal{F}( M) )=\chi(M)$.
\begin{example}
If we consider the
biadditive operation 
\begin{equation*}
\begin{array}{ccc}
Der( \mathcal{F}) \times Der( \mathcal{F}) & ^{\underrightarrow{~\ \ [
,]_{Der(\mathcal{F})} ~\ \ }} & Der( \mathcal{F}) \\ 
( X,Y) & \longmapsto & [ X,Y]_{Der(\mathcal{F})}%
\end{array}
,
\end{equation*}%
given by 
\begin{equation*}
[ X,Y]_{Der(\mathcal{F})}( f) =X( Y( f) ) -Y( X( f) ) ,~\forall f\in 
\mathcal{F},
\end{equation*}
then $( Der( \mathcal{F})  ,[ ,]_{Der(\mathcal{F})} ) $ is an $\mathcal{F}$-algebra.
\end{example}
\begin{definition}\label{Def12}
If $( A,[ ,] _{A}) $ is an $\mathcal{F}$-algebra such that $[ ,]
_{A} $ satisfies the conditions:

$LA_{1}.$ $[ u,u] _{A}=0,$ for any $u\in A$,

$LA_{2}.$ $[ u,[ v,z] _{A}] _{A}+[ z,[ u,v] _{A}] _{A}+[ v,[ z,u] _{A}]
_{A}=0,$ for any $u,v,z\in A$,\newline
then we will say that $( A,[ ,] _{A}) $ is a Lie $\mathcal{F}$%
-algebra or Lie algebra over $\mathcal{F}$.
\end{definition}
\begin{example}
It is easy to check that the $\mathcal{F}$-algebra $( Der( \mathcal{F})
,[,]_{Der(\mathcal{F})}) $ is a Lie $\mathcal{F}$-algebra.
\end{example}
Using Definition \ref{Def12} we deduce the following:
\begin{proposition}
If $( A,[ ,] _{A}) $ is a Lie $\mathcal{F}$-algebra, then we have:

1. $[ u,v] _{A}=-[ v,u] _{A},$ for any $u,v\in A$,

2. $[ u,0] _{A}=0,$ for any $u\in A$,

3. $[ -u,v] _{A}=-[ u,v] _{A}=[ u,-v] _{A}$, for any $u,v\in A.$
\end{proposition}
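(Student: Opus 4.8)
The plan is to derive all three statements from the two Lie algebra axioms $LA_1$ and $LA_2$, together with the biadditivity of $[,]_A$ which is built into the definition of an $\mathcal{F}$-algebra; crucially, no homogeneity is needed, so the argument is the familiar one from classical Lie algebra theory adapted to the biadditive setting.

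For part 1 (antisymmetry), I would start from $LA_1$ applied to $u+v$, namely $[u+v,u+v]_A=0$. Expanding the left side using biadditivity gives $[u,u]_A+[u,v]_A+[v,u]_A+[v,v]_A=0$, and then $LA_1$ kills the two diagonal terms, leaving $[u,v]_A+[v,u]_A=0$, i.e. $[u,v]_A=-[v,u]_A$. The only mild point to note is that biadditivity (additivity in each slot separately) is exactly what licenses the four-term expansion; bihomogeneity plays no role here.

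For part 2, I would use biadditivity in the second slot: $[u,0]_A=[u,0+0]_A=[u,0]_A+[u,0]_A$, hence $[u,0]_A=0$. (Symmetrically one gets $[0,v]_A=0$, or one can invoke part 1.) For part 3, I would again use biadditivity: from $[u,v]_A+[-u,v]_A=[u+(-u),v]_A=[0,v]_A=0$ we get $[-u,v]_A=-[u,v]_A$, and similarly $[u,-v]_A=-[u,v]_A$ using additivity in the second slot together with part 2; combining the two equalities yields the chain $[-u,v]_A=-[u,v]_A=[u,-v]_A$.

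There is essentially no obstacle here: the proof is entirely formal and rests only on the group structure of $(A,+)$ and biadditivity, with $LA_1$ invoked once for part 1. The one thing worth being careful about is not to accidentally appeal to $\mathcal{F}$-bilinearity (scalar homogeneity), since the whole point of the paper's framework is that it has been dropped; every cancellation above must be justified purely by additivity in each argument. I would present the three parts in the order 1, 2, 3, since part 1 is used (optionally) in part 2 and part 2 is used in part 3.
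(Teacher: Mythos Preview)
Your proposal is correct and is precisely the standard argument the paper has in mind; the paper itself omits the proof entirely, merely stating that the proposition follows from Definition~\ref{Def12}. Your careful emphasis on using only biadditivity (and not $\mathcal{F}$-homogeneity) is exactly the right point to stress in this framework.
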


\begin{definition}
Let $ A $ be an $\mathcal{F}$-module. If there exists a modules
morphism $\rho $ from $ A$ to $Der( \mathcal{F}) $
and a biadditive operation 
\begin{equation*}
\begin{array}{ccc}
A\times A & ^{\underrightarrow{~\ \ [ ,] _{A}~\ \ }} & A \\ 
( u,v) & \longmapsto & [ u,v] _{A}%
\end{array}
,
\end{equation*}%
satisfies in 
\begin{equation}
[ u,fv] _{A}=f[ u,v] _{A}+\rho ( u) ( f) \cdot v,
\end{equation}%
for any $u,v\in A$ and $f\in \mathcal{F}$ such that $( A,[ ,]
_{A}~) $ is a Lie $\mathcal{F}$-algebra, then $( A ,[
,] _{A}, \rho ) $ is called a generalized Lie $\mathcal{F}$-algebra or
generalized Lie algebra over $ \mathcal{F}$.
\end{definition}
\begin{example}\label{19}
Let $\mathcal{F}$ be a commutative ring. Obviously it is an $\mathcal{F}$-module. Now we consider the direct sum 
\begin{equation*}
Der(\mathcal{F})\oplus\mathcal{F}=\{X\oplus f| X\in Der(\mathcal{F}), f\in%
\mathcal{F}\}.
\end{equation*}
If we define 
\begin{equation*}
(X\oplus f)+(Y\oplus g)=(X+Y)\oplus(f+g),\ \ h\cdot(X\oplus f)=h\cdot
X\oplus h\cdot f,
\end{equation*}
for any $X, Y\in Der(\mathcal{F})$ and $f, g, h\in\mathcal{F}$, then $Der(%
\mathcal{F})\oplus\mathcal{F}$ is a $\mathcal{F}$-module.
Defining 
\begin{align} \label{LIE}
[X\oplus f, Y\oplus g]_{_{Der(\mathcal{F})\oplus\mathcal{F}}}=[X, Y]_{_{Der(%
\mathcal{F})}}\oplus (X(g)-Y(f)),
\end{align}
it is easy to see that $[, ]_{_{Der(\mathcal{F})\oplus\mathcal{F}}}$ is
biadditive on $Der(\mathcal{F})\oplus\mathcal{F}$ and so $(Der(\mathcal{F}%
)\oplus\mathcal{F},  [, ]_{_{Der(\mathcal{F})\oplus\mathcal{F}}})$
is an $\mathcal{F}$-algebra. Direct calculations give us 
\begin{align*}
[X\oplus f, X\oplus f]_{_{Der(\mathcal{F})\oplus\mathcal{F}}}=0_{Der(%
\mathcal{F})}\oplus 0_{\mathcal{F}}=0_{Der(\mathcal{F})\oplus\mathcal{F}},
\end{align*}
and 
\begin{align*}
&[X\oplus f, [Y\oplus g, Z\oplus h]]_{_{Der(\mathcal{F})\oplus\mathcal{F}%
}}+[Y\oplus g, [Z\oplus h, X\oplus f]]_{_{Der(\mathcal{F})\oplus\mathcal{F}}}
\\
&+[Z\oplus h, [X\oplus f, Y\oplus g]]_{_{Der(\mathcal{F})\oplus\mathcal{F}%
}}=0_{Der(\mathcal{F})\oplus\mathcal{F}}.
\end{align*}
Thus $(Der(\mathcal{F})\oplus\mathcal{F}, [, ]_{_{Der(\mathcal{F}
)\oplus\mathcal{F}}})$ is a Lie $\mathcal{F}$-algebra. Now we define 
\begin{equation*}
\begin{array}{ccc}
Der(\mathcal{F})\oplus\mathcal{F} & ^{\underrightarrow{~\ \ \rho~\ \ }} & 
Der(\mathcal{F}) \\ 
X\oplus f & \longmapsto & X%
\end{array}
,
\end{equation*}%
for any $X\in Der(\mathcal{F})$ and $f\in\mathcal{F}$. It is easy to check
that $\rho$ is a modules morphism from $Der(\mathcal{F})\oplus\mathcal{F}
$ to $Der(\mathcal{F})$. Here, we show that 
\begin{align}\label{LIE1}
&[X\oplus f, h\cdot(Y\oplus g)]_{_{Der(\mathcal{F})\oplus\mathcal{F}%
}}=h\cdot [X\oplus f, Y\oplus g]_{_{Der(\mathcal{F})\oplus\mathcal{F}%
}}+\rho(X\oplus f)(h)\cdot(Y\oplus g).
\end{align}
Using (\ref{LIE}) we get 
\begin{align*}
&[X\oplus f, h\cdot(Y\oplus g)]_{_{Der(\mathcal{F})\oplus\mathcal{F}%
}}=[X\oplus f, h\cdot Y\oplus h\cdot g]_{_{Der(\mathcal{F})\oplus\mathcal{F}%
}}=[X, h\cdot Y]_{_{Der(\mathcal{F})}}  \notag \\
&\oplus(X(h\cdot g)-h\cdot Y(f))=(h\cdot [X, Y]_{_{Der(\mathcal{F}%
)}}+X(h)\cdot Y)\oplus(X(h\cdot g)-h\cdot Y(f))  \notag \\
&=h\cdot [X, Y]_{_{Der(\mathcal{F})}}\oplus h\cdot(X(g)-Y(f))+X(h)\cdot
Y\oplus g\cdot X(h) \\
&=h\cdot ([X, Y]_{_{Der(\mathcal{F})}}\oplus(X(g)-Y(f)))+X(h)\cdot(Y\oplus g)
\\
&=h\cdot [X\oplus f, Y\oplus g]_{_{Der(\mathcal{F})\oplus\mathcal{F}%
}}+\rho(X\oplus f)(h)\cdot(Y\oplus g).
\end{align*}
Thus (\ref{LIE1}) holds and consequently $(Der(\mathcal{F})\oplus\mathcal{F},
 [, ]_{_{Der(\mathcal{F})\oplus\mathcal{F}}}, \rho)$ is a
generalized Lie $\mathcal{F}$-algebra. Relation (\ref{LIE}) shows that $[, ]_{_{Der(\mathcal{F})\oplus\mathcal{F}}}$ is not $\mathcal{F}$-bilinear and consequently $(Der(\mathcal{F})\oplus\mathcal{F},
 [, ]_{_{Der(\mathcal{F})\oplus\mathcal{F}}})$ is not classical Lie algebra over $\mathcal{F}$. Thus $(Der(\mathcal{F%
})\oplus\mathcal{F}, \mathcal{F})$ is not a Lie-Rinehart algebra over $%
\mathcal{F}$.
\end{example}
\begin{proposition}
\label{Prop18} If $( A,[ ,] _{A}, \rho ) $ is a generalized Lie $%
\mathcal{F}$-algebra, then $\rho $ is a Lie algebras morphism from $%
(A ,[ ,] _{A})$ to $( Der( \mathcal{F}) ,[ ,]_{Der( 
\mathcal{F})})$.
\end{proposition}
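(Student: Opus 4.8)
The content of the statement is the bracket-compatibility of $\rho$: since $\rho$ is part of the data of a generalized Lie $\mathcal{F}$-algebra it is already an $\mathcal{F}$-module morphism (biadditive and $\mathcal{F}$-homogeneous), so what must be shown is
\[
\rho([u,v]_A)=[\rho(u),\rho(v)]_{Der(\mathcal{F})},\qquad \forall\,u,v\in A.
\]
The plan is to feed the Jacobi identity $LA_{2}$ a ``probe'' of the form $z=f\cdot w$ with $f\in\mathcal{F}$, $w\in A$, and to read off the desired relation by comparing the Leibniz expansions of the two sides.

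First I would put $LA_{2}$ into derivation form. Biadditivity together with $LA_{1}$ gives the antisymmetry $[x,y]_A=-[y,x]_A$ (item $1$ of the preceding proposition), and inserting this into $LA_{2}$ yields
\[
[u,[v,z]_A]_A-[v,[u,z]_A]_A=[[u,v]_A,z]_A,\qquad \forall\,u,v,z\in A.
\]
Next, set $z=f\cdot w$. On the right-hand side one application of the Leibniz rule $[x,fy]_A=f[x,y]_A+\rho(x)(f)\cdot y$ gives $[[u,v]_A,fw]_A=f[[u,v]_A,w]_A+\rho([u,v]_A)(f)\cdot w$. On the left-hand side, first expand $[v,fw]_A=f[v,w]_A+\rho(v)(f)\cdot w$ and then apply the Leibniz rule once more (with scalar $f$ on $[v,w]_A$, and with scalar $\rho(v)(f)$ on $w$) to obtain
\[
[u,[v,fw]_A]_A=f[u,[v,w]_A]_A+\rho(u)(f)[v,w]_A+\rho(v)(f)[u,w]_A+\rho(u)\big(\rho(v)(f)\big)\cdot w,
\]
and the analogous identity for $[v,[u,fw]_A]_A$ with $u$ and $v$ interchanged. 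Subtracting, the cross terms $\rho(u)(f)[v,w]_A$ and $\rho(v)(f)[u,w]_A$ cancel, the part carrying the scalar $f$ collapses — again by the derivation form of $LA_{2}$, now applied with $z=w$ — to $f[[u,v]_A,w]_A$, and the two second-order terms combine into $\big(\rho(u)\rho(v)(f)-\rho(v)\rho(u)(f)\big)\cdot w=[\rho(u),\rho(v)]_{Der(\mathcal{F})}(f)\cdot w$.

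Equating the two expressions just obtained for $[[u,v]_A,fw]_A$ and cancelling the common summand $f[[u,v]_A,w]_A$ leaves
\[
\rho([u,v]_A)(f)\cdot w=[\rho(u),\rho(v)]_{Der(\mathcal{F})}(f)\cdot w,\qquad \forall\,f\in\mathcal{F},\ \forall\,w\in A.
\]
The delicate step — the one I expect to be the real obstacle — is to conclude from this the equality of the two derivations in $Der(\mathcal{F})$, i.e.\ to drop the factor $w$. What the displayed identity says is that $\rho([u,v]_A)(f)-[\rho(u),\rho(v)]_{Der(\mathcal{F})}(f)$ lies in $\Ann_{\mathcal{F}}(A)$; under the module conventions in force (faithfulness of $A$ over $\mathcal{F}$, which holds for the examples of interest, e.g.\ $A=Der(\mathcal{F})\oplus\mathcal{F}$, and more generally whenever $\mathcal{F}$ acts on $A$ with zero annihilator) this element must vanish, and since $f\in\mathcal{F}$ was arbitrary we get $\rho([u,v]_A)=[\rho(u),\rho(v)]_{Der(\mathcal{F})}$. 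Combined with the module-morphism property already built into $\rho$, this is precisely the assertion that $\rho$ is a morphism of Lie $\mathcal{F}$-algebras.
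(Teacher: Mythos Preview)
Your argument is essentially the paper's own proof: both feed $z=f\cdot w$ into the Jacobi identity, expand each bracket via the Leibniz rule, cancel, and arrive at $\rho([u,v]_A)(f)\cdot w=[\rho(u),\rho(v)]_{Der(\mathcal{F})}(f)\cdot w$. You are actually more scrupulous than the paper at the final step --- the paper simply writes ``and consequently'' to drop the factor $w$, whereas you correctly point out that this requires $A$ to be a faithful $\mathcal{F}$-module.
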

\begin{proof}
Let $u,v,w\in A$ and $f\in \mathcal{F}$. Since $(A
,[,]_{A})$ is a Lie $\mathcal{F}$-algebra, we have the Jacobi identity:%
\begin{equation}
\lbrack \lbrack u,v]_{A},fw]_{A}=[u,[v,fw]_{A}]_{A}-[v,[u,fw]_{A}]_{A}.
\label{Jacob}
\end{equation}%
Using the definition of generalized Lie $\mathcal{F}$-algebra, we obtain 
\begin{equation*}
\lbrack \lbrack u,v]_{A},fw]_{A}=f[[u,v]_{A},w]_{A}+\rho \lbrack
u,v]_{A}(f)\cdot w.
\end{equation*}%
Similarly, we get 
\begin{equation*}
\begin{array}{cl}
\lbrack u,[v,fw]_{A}]_{A} & =[u,f[v,w]_{A}+\rho (v)(f)\cdot w]_{A} \\ 
& =f[u,[v,w]_{A}]_{A}+\rho (u)(f)\cdot \lbrack v,w]_{A} \\ 
& \ \ \ +\rho (v)(f)\cdot \lbrack u,w]_{A}+\rho (u)(\rho (v)(f))\cdot w,%
\end{array}%
\end{equation*}%
and%
\begin{equation*}
\begin{array}{cl}
\lbrack v,[u,fw]_{A}]_{A} & =[v,f[u,w]_{A}+\rho (u)(f)\cdot w]_{A} \\ 
& =f[v,[u,w]_{A}]_{A}+\rho (v)(f)\cdot \lbrack u,w]_{A} \\ 
& \ \ \ +\rho (u)(f)\cdot \lbrack v,w]_{A}+\rho (v)(\rho (u)(f))\cdot w.%
\end{array}%
\end{equation*}%
Setting three above equations in (\ref{Jacob}) and using Jacobi identity, we
obtain 
\begin{equation*}
\begin{array}{cl}
\rho \lbrack u,v]_{A}(f)\cdot w & =\rho (u)(\rho (v)(f))\cdot w-\rho
(v)(\rho (u)(f))\cdot w \\ 
& =[\rho (u),\rho (v)]_{Der( \mathcal{F})}(f)\cdot w,%
\end{array}%
\end{equation*}%
and consequently 
\begin{equation*}
\rho \lbrack u,v]_{A}=[\rho (u),\rho (v)]_{Der( \mathcal{F})}.
\end{equation*}%
Thus modules morphism $\rho $ is a Lie algebras morphism from $(A
,[,]_{A})$ to\\ $(Der(\mathcal{F}) ,[,]_{Der(\mathcal{F})})$.
\end{proof}
\begin{definition}
Let $( A^{\prime } ,[ ,] _{A^{\prime }},\rho ^{\prime }) $ be an
another generalized Lie $\mathcal{F}$-algebra. A generalized Lie $\mathcal{F}
$-algebras morphism from $( A,[ ,] _{A}, \rho ) $ to $( A^{\prime
} ,[ ,] _{A^{\prime }},\rho ^{\prime }) $ is a Lie $\mathcal{F}$%
-algebras morphism $\varphi $ from $( A ,[ ,] _{A})$ to $( A^{\prime
},[ ,] _{A^{\prime }}) $ such that the following diagram is
commutative:%
\begin{equation*}
\begin{array}{ccc}
A & ^{\underrightarrow{~\ \ \varphi ~\ \ }} & ~\ \ \ \ \ A^{\prime } \\ 
~\ \ \ \ \rho \searrow &  & \swarrow \rho ^{\prime } \\ 
& Der( \mathcal{F}) & 
\end{array}%
.
\end{equation*}
\end{definition}
Thus, we can discuss about the category of generalized Lie $\mathcal{F}$%
-algebra.
\begin{remark}
When we consider a basis for $A$, we let that it is free module and when we consider a basis for $Der(\mathcal{F}) $, we let
that it is a free module also. Note that, in general, $Der(\mathcal{F})$ is
not a free module. For example, if $M$ is an arbitrary manifold then $Der(%
\mathcal{F}(M))=\chi (M)$ is not free module. But, if $M$ is a
parallelizable manifold, then $Der(\mathcal{F}(M))=\chi (M)$ is a free
module. In particular, if $G$ is a Lie group then $Der(\mathcal{F}(G))=\chi
(G)$ is a free module.
\end{remark}

Let $\{\partial _{i}\}$ be a basis of the Lie $\mathcal{F}$-algebra of
derivations of $\mathcal{F}$ and $\{t_{\alpha }\}$
be a basis of the generalized Lie $\mathcal{F}$-algebra $(A
,[,]_{A},\rho )$. Then we use the notations%
\begin{equation*}
\rho (t_{\alpha })=\rho _{\alpha }^{i}\partial _{i},\ \ \ 
\lbrack t_{\alpha },t_{\beta }]_{A}=L_{\alpha \beta }^{\gamma }t_{\gamma },\ \ \ [\partial_i, \partial_j]_{Der(\mathcal{F})}=C_{ij}^k\partial_k,
\end{equation*}%
where $\rho _{\alpha }^{i}$, $L_{\alpha \beta }^{\gamma }$ and $C_{ij}^k$ belong to $\mathcal{F}$. It is easy to see that $L_{\alpha \beta }^{\gamma }=-L_{\beta \alpha
}^{\gamma }$ and $C_{ij}^k=-C_{ji}^k$. The components $L_{\alpha \beta }^{\gamma }$ are called the {%
\textit{structure elements}} of the generalized Lie $\mathcal{F}$-algebra $%
(A ,[,]_{A}~,\rho )$. Using the above proposition, we obtain 
\begin{equation*}
\begin{array}{c}
L_{\alpha \beta }^{\gamma }\rho _{\gamma }^{k}=\rho _{\alpha }^{i}\partial
_{i}(\rho _{\beta }^{k})-\rho _{\beta }^{j}\partial _{j}(\rho _{\alpha
}^{k})+\rho^i_\alpha\rho^j_\beta C_{ij}^k.
\end{array}%
\end{equation*}
Here considering a basis for $Der(\mathcal{F})$ we present a new example of generalized Lie algebra.
\begin{example}
	Let $\mathcal{F}$ be a ring such that $Der(\mathcal{F})$ is a free module with basis $\{\partial_i\}_{i\in
		\overline{1,m}}$. Using the modules morphism $\rho:Der(\mathcal{F})\rightarrow Der(\mathcal{F})$, we define the operation
	\begin{equation*}
	\begin{array}{ccc}
	Der(\mathcal{F})\times Der(\mathcal{F}) & ^{\underrightarrow{~\ \ \bullet ~\ \ }} & Der(\mathcal{F}) \\
	(X,Y) & \longmapsto  & X\bullet Y
	\end{array}%
	,
	\end{equation*}
	given by the equality $X\bullet Y=Y^i(X\circ \partial_i)+\rho(X)(Y^i)\partial_i$, where $Y=Y^i\partial_i$ and "$\circ$" is the usual composition operation. Now, we define 
	\begin{equation}\label{*1}
	[X,Y]^{\bullet}_{Der(\mathcal{F})}=X\bullet Y-Y\bullet X, \ \ \forall X, Y\in Der(\mathcal{F}).
	\end{equation}
	If $X=X^i\partial_i$ and $Y=Y^j\partial_j$, then we have
	\begin{align*}
	&[X,Y]^{\bullet}_{Der(\mathcal{F})}(fg)=[X^i\partial_i,Y^j\partial_j]^{\bullet}_{Der(\mathcal{F})}(fg)=\{(X^i\partial_i)\bullet(Y^j\partial_j)-(Y^j\partial_j)\bullet (X^i\partial_i)\}(fg)\\
	&=\{X^iY^j\partial_i\circ\partial_j+X^i\rho(\partial_i)(Y^j)\partial_j-Y^j\rho(\partial_j)(X^i)\partial_i-X^iY^j\partial_j\circ\partial_i\}(fg). 
	\end{align*}
	Using the Liebnitz property of $\partial_i$ and $\partial_j$ we get
	\begin{align*}
	&[X,Y]^{\bullet}_{Der(\mathcal{F})}(fg)=\{X^iY^j(\partial_i\partial_j(f))+X^i\rho(\partial_i)(Y^j)\partial_j(f)-Y^j\rho(\partial_j)(X^i)\partial_i(f)\\
	&-X^iY^j(\partial_j\partial_i(f))\}\cdot g+f\cdot\{X^iY^j(\partial_i\partial_j(g))+X^i\rho(\partial_i)(Y^j)\partial_j(g)-Y^j\rho(\partial_j)(X^i)\partial_i(g)\\
	&-X^iY^j(\partial_j\partial_i(g))\}=[X^i\partial_i,Y^j\partial_j]^{\bullet}_{Der(\mathcal{F})}(f)\cdot g+f\cdot[X^i\partial_i,Y^j\partial_j]^{\bullet}_{Der(\mathcal{F})}(g)\\
	&=[X,Y]^{\bullet}_{Der(\mathcal{F})}(f)\cdot g+f\cdot[X,Y]^{\bullet}_{Der(\mathcal{F})}(g).
	\end{align*}
	Thus $[X,Y]^{\bullet}_{Der(\mathcal{F})}\in Der(\mathcal{F})$. Moreover, if $f\in\mathcal{F}$, then we have
	\begin{align*}
	&[X,fY]^{\bullet}_{Der(\mathcal{F})}=[X^i\partial_i,fY^j\partial_j]^{\bullet}_{Der(\mathcal{F})}=(X^i\partial_i)\bullet(fY^j\partial_j)-(fY^j\partial_j)\bullet(X^i\partial_i)\\
	&=fX^iY^j\partial_i\circ\partial_j+X^i\rho(\partial_i)(f)Y^j\partial_j+fX^i\rho(\partial_i)(Y^j)\partial_j-fX^iY^j(\partial_j\circ\partial_i)\\
	&-fY^j\rho(\partial_j)(X^i)\partial_i=f[X, Y]^{\bullet}_{Der(\mathcal{F})}+\rho(X)(f)Y.
	\end{align*}
	Now we check the Jacobi identity for $[,]_{Der(\mathcal{F})}$. Using (\ref{*1}) we get
	\begin{align*}
	&[[X,Y]^{\bullet}_{Der(\mathcal{F})}, Z]^{\bullet}_{Der(\mathcal{F})}=[[X^i\partial_i,Y^j\partial_j]^{\bullet}_{Der(\mathcal{F})}, Z^k\partial_k]^{\bullet}_{Der(\mathcal{F})}\\
	&=[(X^i\partial_i)\bullet(Y^j\partial_j)-(Y^j\partial_j)\bullet(X^i\partial_i),Z^k\partial_k]^{\bullet}_{Der(\mathcal{F})}\\
	&=[X^iY^j\partial_i\circ\partial_j+X^i\rho(\partial_i)(Y^j)\partial_j-X^iY^j(\partial_j\circ\partial_i)-Y^j\rho(\partial_j)(X^i)\partial_i,Z^k\partial_k]^{\bullet}_{Der(\mathcal{F})}\\
	&=X^iY^jZ^k\partial_i\circ\partial_j\circ\partial_k+X^iY^j\rho(\partial_i)(\rho(\partial_j)(Z^k))\partial_k-Z^k\rho(\partial_k)(X^iY^j)\partial_i\circ\partial_j-X^iY^jZ^k\partial_k\circ\partial_i\circ\partial_j\\
	&+X^iZ^k\rho(\partial_i)(Y^j)\partial_j\circ\partial_k+X^i\rho(\partial_i)(Y^j)\rho(\partial_j)(Z^k)\partial_k-X^iZ^k\rho(\partial_i)(Y^j)\partial_k\circ\partial_j-Z^k\rho(\partial_k)(X^i\rho(\partial_i)(Y^j))\partial_j\\
	&-X^iY^jZ^k\partial_j\circ\partial_i\circ\partial_k-X^iY^j\rho(\partial_j)(\rho(\partial_i)(Z^k))\partial_k+Z^k\rho(\partial_k)(X^iY^j)\partial_j\circ\partial_i+X^iY^jZ^k\partial_k\circ\partial_j\circ\partial_i\\
	&-Y^jZ^k\rho(\partial_j)(X^i)\partial_i\circ\partial_k-Y^j\rho(\partial_j)(X^i)\rho(\partial_i)(Z^k)\partial_k+Y^jZ^k\rho(\partial_j)(X^i)\partial_k\circ\partial_i+Z^k\rho(\partial_k)(Y^j\rho(\partial_j)(X^i))\partial_i.
	\end{align*}
	Also we can obtain similar relations for $[[Y,Z]^{\bullet}_{Der(\mathcal{F})}, X]^{\bullet}_{Der(\mathcal{F})}$ and $[[Z,X]^{\bullet}_{Der(\mathcal{F})}, Y]^{\bullet}_{Der(\mathcal{F})}$. Using these relations it is easy to see that  
	\[
	[[X,Y]^{\bullet}_{Der(\mathcal{F})}, Z]^{\bullet}_{Der(\mathcal{F})}+[[Y,Z]^{\bullet}_{Der(\mathcal{F})}, X]^{\bullet}_{Der(\mathcal{F})}+[[Z,X]^{\bullet}_{Der(\mathcal{F})}, Y]^{\bullet}_{Der(\mathcal{F})}=0.
	\] 
	Thus Jacobi identity holds for this bracket. So, $(Der(\mathcal{F}), [,]^{\bullet}_{Der(\mathcal{F})}, \rho)$ is a new object of the category of generalized Lie algebras
\end{example}

\section{Generalized Lie algebroids}

\ \ \ \ 
Let $(E,\pi ,M)$ be an arbitrary vector bundle such that $M$ is paracompact
and let $[ \mathcal{A}_{M}] $ be the differentiable structure of te base
manifold $M$, such that for every $( U,\mathcal{\xi }_{U}) \in [ \mathcal{A}%
_{M}] $, the set $U$ is also the domain of a locally bundle chart.

For any $( U,\mathcal{\xi }_{U}) \in [ \mathcal{A}_{M}]$, it is easily seen that $\mathcal{F}( M) _{\mid U}=\left\{ f_{\mid U}~;~f\in \mathcal{F}( M) \right\} 
$ is a unitary ring with respect to the operations
\begin{equation*}
\begin{array}{ccc}
\mathcal{F}( M) _{\mid U}\mathcal{\times F}( M) _{\mid U} & ^{%
\underrightarrow{~\ \ +~\ \ }} & \mathcal{F}( M) _{\mid U} \\ 
( f_{\mid U},g_{\mid U}) & \longmapsto & f_{\mid U}+g_{\mid U}%
\end{array}
,
\end{equation*}%
and%
\begin{equation*}
\begin{array}{ccc}
\mathcal{F}( M) _{\mid U}\mathcal{\times F}( M) _{\mid U} & ^{%
\underrightarrow{~\ \ \cdot ~\ \ }} & \mathcal{F}( M) _{\mid U} \\ 
( f_{\mid U},g_{\mid U}) & \longmapsto & f_{\mid U}\cdot g_{\mid U}%
\end{array}
.
\end{equation*}
We remark that any function $f\in \mathcal{F}( M) $ is given by the set of
all its restrictions $\left\{ f_{\mid U},~( U,\mathcal{\xi }_{U}) \in [ 
\mathcal{A}_{M}] \right\}$. It follows easily that $\mathcal{F}( M) $ is a unitary
ring with respect to the operations%
\begin{equation*}
\begin{array}{ccc}
\mathcal{F}( M) \mathcal{\times F}( M) & ^{\underrightarrow{~\ \ +~\ \ }} & 
\mathcal{F}( M) \\ 
( f,g) & \longmapsto & f+g%
\end{array}
,
\end{equation*}%
and%
\begin{equation*}
\begin{array}{ccc}
\mathcal{F}( M) \mathcal{\times F}( M) & ^{\underrightarrow{~\ \ \cdot ~\ \ }%
} & \mathcal{F}( M) \\ 
( f,g) & \longmapsto & f\cdot g%
\end{array}
,
\end{equation*}%
such that $( f+g) _{\mid U}=f_{\mid U}+g_{\mid U}$ and $( f\cdot g) _{\mid
U}=f_{\mid U}\cdot g_{\mid U},$ for any $( U,\mathcal{\xi }_{U}) \in [ 
\mathcal{A}_{M}]$.\\
We now consider the
restriction vector bundle $( E_{\mid U},\pi ,U)$, for any $( U,\mathcal{\xi }_{U}) \in [ \mathcal{A}_{M}]$. We know that $\Gamma (
E_{\mid U},\pi ,U) $ is a free module over
$\mathcal{F}( M) _{\mid U}$. For any $( U,\mathcal{\xi }_{U})
\in [ \mathcal{A}_{M}] ,$ we derive that $\Gamma ( E_{\mid U},\pi ,U) $ is a 
$\mathcal{F}( M) _{\mid U}$-module with respect to the operations%
\begin{equation*}
\begin{array}{ccc}
\Gamma ( E_{\mid U},\pi ,U) \mathcal{\times }\Gamma ( E_{\mid U},\pi ,U) & ^{%
\underrightarrow{~\ \ +~\ \ }} & \Gamma ( E_{\mid U},\pi ,U) \\ 
( u_{\mid U},v_{\mid U}) & \longmapsto & u_{\mid U}+v_{\mid U}%
\end{array}%
,
\end{equation*}%
\begin{equation*}
\begin{array}{ccc}
\mathcal{F}( M) _{\mid U}\mathcal{\times }\Gamma ( E_{\mid U},\pi ,U) & ^{%
\underrightarrow{~\ \ \cdot ~\ \ }} & \Gamma ( E_{\mid U},\pi ,U) \\ 
( f_{\mid U},u_{\mid U}) & \longmapsto & f_{\mid U}\cdot u_{\mid U}%
\end{array}
.
\end{equation*}
Any global section $u\in \Gamma (E,\pi ,M)$ is given by the set of all its
restrictions $\left\{ u_{\mid U},~( U,\varphi _{U}) \in [ \mathcal{A}_{M}]
\right\}$. It is easy to check that $\Gamma (E,\pi ,M)$ is a $\mathcal{F}( M) $-module
with respect to the operations%
\begin{equation*}
\begin{array}{ccc}
\Gamma (E,\pi ,M)\mathcal{\times }\Gamma (E,\pi ,M) & ^{\underrightarrow{~\
\ +~\ \ }} & \Gamma (E,\pi ,M) \\ 
( u,v) & \longmapsto & u+v%
\end{array}
,
\end{equation*}%
and%
\begin{equation*}
\begin{array}{ccc}
\mathcal{F}( M) \mathcal{\times }\Gamma (E,\pi ,M) & ^{\underrightarrow{~\ \
\cdot ~\ \ }} & \Gamma (E,\pi ,M) \\ 
( f,u) & \longmapsto & f\cdot u%
\end{array}
,
\end{equation*}%
such that $( u+v) _{\mid U}=u_{\mid U}+v_{\mid U}$ and $( f\cdot u) _{\mid
U}=f_{\mid U}\cdot u_{\mid U},$ for any $( U,\xi _{U}) \in [ \mathcal{A}%
_{M}] $.\\
Now, let $(F,\nu ,N)$ be an another vector bundle and $(\varphi ,\varphi
_{0})$ is a vector bundles morphism from $(E,\pi ,M)$ to $(F,\nu ,N)$ such
that $\varphi _{0}$ is a diffeomorphism from $M$ to $N$. Using the operation 
\begin{equation*}
\begin{array}{ccc}
\mathcal{F}(M)\times \Gamma (F,\nu ,N) & ^{\underrightarrow{~\ \ \cdot ~\ \ }%
} & \Gamma (F,\nu ,N), \\ 
(f,z) & \longmapsto & f\circ \varphi _{0}^{-1}\cdot z%
\end{array}
,
\end{equation*}%
we deduce that $\Gamma (F,\nu ,N)$ is a $\mathcal{F}(M)$-module.

Let $( U,\xi _{U}) \in [ \mathcal{A}_{M}] $ and $( V,\eta _{V}) \in [ 
\mathcal{A}_{N}] $ such that $U\subseteq \varphi _{0}^{-1}(V)$ and let $\left\{ s_{a},a\in \overline{1,r}\right\}$ and $\left\{ t_{\alpha
},\alpha \in \overline{1,p}\right\} $ be the local bases for $\Gamma ( E_{\mid U},\pi ,U) $ and $ \Gamma (
F_{V},\nu ,V)$, respectively. Without restriction of generality, we can
consider that locally we have the modules morphism 
\begin{equation*}
\begin{array}{ccc}
\Gamma ( E_{\mid U},\pi ,U) & ^{\underrightarrow{~\ \ \Gamma ( \varphi
,\varphi _{0}) ~\ \ }} & \Gamma ( F_{\mid V},\nu ,V) \\ 
u_{\mid U}=u^{a}s_{a} & \longmapsto & \Gamma ( \varphi ,\varphi _{0}) (
u_{\mid U})%
\end{array}
,
\end{equation*}%
where 
$\Gamma ( \varphi ,\varphi _{0}) ( u_{\mid U}) =( u^{a}\circ \varphi
_{0}^{-1}) ( \varphi _{a}^{\alpha }\circ \varphi _{0}^{-1}) t_{\alpha }$.
\begin{proposition}
Let $(\rho ,\eta )$ and $( Th,h) $ be two vector bundles morphisms given by
the diagram
\begin{equation*}
\begin{array}[b]{ccccc}
F & ^{\underrightarrow{~\ \ \ \rho \ \ \ \ }} & TM & ^{\underrightarrow{~\ \
\ Th\ \ \ \ }} & TN \\ 
~\downarrow \nu &  & ~\ \ \downarrow \tau _{M} &  & ~\ \ \downarrow \tau _{N}
\\ 
N & ^{\underrightarrow{~\ \ \ \eta ~\ \ }} & M & ^{\underrightarrow{~\ \ \
h~\ \ }} & N%
\end{array}
,
\end{equation*}%
such that $\eta $ and $h$ are diffeomorphisms. If $( U,\xi _{U}) \in [ 
\mathcal{A}_{M}] $ and $( V,\eta _{V}) \in [ \mathcal{A}_{N}] $ such that $%
U\subseteq h^{-1}( V) ,$ then the modules morphism $\Gamma ( Th\circ \rho
,h\circ \eta ) $ is given locally by the equality 
\begin{equation}  \label{15}
\begin{array}{c}
\Gamma ( Th\circ \rho ,h\circ \eta ) ( z^{\alpha }t_{\alpha }) ( f_{\mid V})
=z^{\alpha }\rho _{\alpha }^{i}( \frac{\partial ( f\circ h_{\mid U}) }{%
\partial x^{i}}\circ h^{-1}) _{\mid V},%
\end{array}%
\end{equation}%
for any $z^{\alpha }t_{\alpha }\in \Gamma (F_{\mid V},\nu ,V)$ and $f\in 
\mathcal{F}( N) .$
\end{proposition}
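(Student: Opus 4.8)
The plan is to deduce (15) from the local description of the module morphism attached to a vector bundle morphism that was just recorded, applied to the composite bundle map $\varphi:=Th\circ\rho\colon F\to TN$ over the diffeomorphism $\varphi_{0}:=h\circ\eta\colon N\to N$. (Equivalently, one may invoke functoriality of $\Gamma$, i.e. $\Gamma(Th\circ\rho,h\circ\eta)=\Gamma(Th,h)\circ\Gamma(\rho,\eta)$, and chase the two factors in turn; this leads to the same bookkeeping.) So the first step is to write down the local matrix of $Th\circ\rho$ relative to the frame $\{t_{\alpha}\}$ of $F_{\mid V}$ and the coordinate frame $\{\partial/\partial y^{j}\}$ of $TN$ over $V$.

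For that I would use two ingredients. First, by definition of the structure functions of the anchor, $\rho(t_{\alpha})=\rho_{\alpha}^{i}\,\partial/\partial x^{i}$. Second, the tangent map of the diffeomorphism $h$ carries the coordinate frame on $U$ to $Th(\partial/\partial x^{i})=(\partial h^{j}/\partial x^{i})\,\partial/\partial y^{j}$ on $V$; that is, the local matrix of $Th$ is the Jacobian of $h_{\mid U}$. Hence $(Th\circ\rho)(t_{\alpha})=\rho_{\alpha}^{i}\,(\partial h^{j}/\partial x^{i})\,\partial/\partial y^{j}$. Substituting this into the general formula $\Gamma(\varphi,\varphi_{0})(u^{a}s_{a})=(u^{a}\circ\varphi_{0}^{-1})(\varphi_{a}^{\alpha}\circ\varphi_{0}^{-1})t_{\alpha}$ with $\varphi=Th\circ\rho$, $\varphi_{0}=h\circ\eta$ and $u^{a}s_{a}=z^{\alpha}t_{\alpha}$ expresses $\Gamma(Th\circ\rho,h\circ\eta)(z^{\alpha}t_{\alpha})$ as an explicit multiple of $\partial/\partial y^{j}$. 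Evaluating that vector field on $f_{\mid V}$ and applying the chain rule in the form $(\partial h^{j}/\partial x^{i})(\partial f/\partial y^{j})=\partial(f\circ h)/\partial x^{i}$ converts the $j$-summation into $\partial(f\circ h_{\mid U})/\partial x^{i}$, and the base-point substitutions produced by the two morphisms collapse — using $\eta\circ(h\circ\eta)^{-1}=h^{-1}$ — into the single composition with $h^{-1}$, restricted to $V$, that appears in (15).

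The computational core here is only the chain rule, so the real work is bookkeeping: keeping straight which functions get reparametrized by $\eta^{-1}$ and which by $h^{-1}$, and checking that on the chart domains with $U\subseteq h^{-1}(V)$ all compositions and restrictions are defined and telescope correctly. A small preliminary point, already guaranteed by the construction preceding the statement, is that $\Gamma(Th\circ\rho,h\circ\eta)$ is local and compatible with restriction to such $U$ and $V$ (both $\rho$ and $Th$ being bundle maps over restrictions of diffeomorphisms), so it suffices to verify the identity frame-wise on chart domains; that is exactly what legitimizes the purely local computation above.
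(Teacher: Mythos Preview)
Your proposal is correct and follows essentially the same route as the paper: both compute the local matrix of $Th\circ\rho$ with respect to the frames $\{t_\alpha\}$ and $\{\partial/\partial\varkappa^{\tilde\imath}\}$ and then invoke the chain rule to convert the $N$-coordinate derivative of $f$ into $\partial(f\circ h)/\partial x^i$. The only cosmetic difference is that the paper writes the chain rule via the inverse Jacobian $A^i_{\tilde\imath}$ (so that the defining relation $\theta_\alpha^{\tilde\imath}(A^i_{\tilde\imath}\circ h^{-1})=\rho_\alpha^i$ collapses the expression), whereas you apply it in the forward direction; the computations are equivalent.
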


\begin{proof}
Since $f\in \mathcal{F}(N)$, then $f\circ h\in \mathcal{F}(M)$
and we have%
\begin{equation*}
\begin{array}{c}
\frac{\partial f\circ h}{\partial x^{i}}=\frac{\partial h^{\tilde{\imath}}}{%
\partial x^{i}}\cdot \frac{\partial f}{\partial \varkappa ^{\tilde{\imath}}}%
\circ h \\ 
~\ \ \ \ \ \ \ \ \ \ \ \ \ \ \ \ \ \ \ \ \ \ \ \ \ \ \ \ \Updownarrow
\left\Vert A_{\tilde{\imath}}^{i}\right\Vert =\left\Vert \frac{\partial h^{%
\tilde{\imath}}}{\partial x^{i}}\right\Vert ^{-1} \\ 
\frac{\partial f}{\partial \varkappa ^{\tilde{\imath}}}\circ h=A_{\tilde{%
\imath}}^{i}\cdot \frac{\partial f\circ h}{\partial x^{i}} \\ 
\Updownarrow \\ 
\frac{\partial f}{\partial \varkappa ^{\tilde{\imath}}}=(A_{\tilde{\imath}%
}^{i}\circ h^{-1})(\frac{\partial f\circ h}{\partial x^{i}}\circ h^{-1}).%
\end{array}%
\end{equation*}%
We now consider that 
\begin{equation*}
\begin{array}{c}
\Gamma (Th\circ \rho ,h\circ \eta )(z^{\alpha }t_{\alpha })(f_{\mid
V})=(z^{\alpha }\theta _{\alpha }^{\tilde{\imath}}\frac{\partial f}{\partial
\varkappa ^{\tilde{\imath}}}),%
\end{array}%
\end{equation*}%
where the components $\theta _{\alpha }^{\tilde{\imath}}$ are given by the
equation $\theta _{\alpha }^{\tilde{\imath}}(A_{\tilde{\imath}}^{i}\circ h^{-1})=\rho_{\alpha }^{i}$, which completes the proof.
\end{proof}
\begin{definition}
A generalized Lie algebroid is a vector bundle $(F,\nu ,N)$ given by the
diagram: 
\begin{equation}\label{diag000}
\begin{array}{c}
\begin{array}[b]{ccccc}
( F,\left[ ,\right] _{F,h}) & ^{\underrightarrow{~\ \ \ \rho \ \
\ \ }} & \left( TM,\left[ ,\right] _{TM}\right) & ^{\underrightarrow{~\ \ \
Th\ \ \ \ }} & \left( TN,\left[ ,\right] _{TN}\right) \\ 
~\downarrow \nu &  & ~\ \ \downarrow \tau _{M} &  & ~\ \ \downarrow \tau _{N}
\\ 
N & ^{\underrightarrow{~\ \ \ \eta ~\ \ }} & M & ^{\underrightarrow{~\ \ \
h~\ \ }} & N%
\end{array}
\end{array}
,
\end{equation}%
where $h$ and $\eta $ are arbitrary diffeomorphisms, $(\rho ,\eta )$ is a
vector bundles morphism from $(F,\nu ,N)$ to $(TM,\tau _{M},M)$ and 
\begin{equation*}
\begin{array}{ccc}
\Gamma \left( F,\nu ,N\right) \times \Gamma \left( F,\nu ,N\right) & ^{%
\underrightarrow{~\ \ \left[ ,\right] _{F,h}~\ \ }} & \Gamma \left( F,\nu
,N\right)\\ 
\left( u,v\right) & \longmapsto & \ \left[ u,v\right] _{F,h}
\end{array}
,
\end{equation*}%
is an operation satisfies in 
\begin{equation*}
\begin{array}{c}
\left[ u,f\cdot v\right] _{F,h}=f\left[ u,v\right] _{F,h}+\Gamma \left(
Th\circ \rho ,h\circ \eta \right) \left( u\right) f\cdot v,\ \ \ \forall
f\in \mathcal{F}(N),%
\end{array}%
\end{equation*}%
such that the 4-tuple $(\Gamma (F,\nu ,N),[,]_{F,h})$ is a Lie $%
\mathcal{F}(N)$-algebra.
\end{definition}
We denote by $((F,\nu ,N),[,]_{F,h},(\rho ,\eta ))$ the generalized
Lie algebroid defined in the above. Moreover, the couple $(\lbrack
,]_{F,h},(\rho ,\eta ))$ is called the \emph{generalized Lie algebroid
structure}.
\begin{remark}
Note that $((F,\nu ,N),[,]_{F,h},(\rho ,\eta ))$ is a generalized Lie
algebroid if and only if $$(\Gamma (F,\nu ,N),[,]_{F,h},\Gamma
(Th\circ \rho ,h\circ \eta )),$$ is a generalized Lie $\mathcal{F}(N)$%
-algebra. This algebra will be called the generalized Lie $\mathcal{F}(N)$%
-algebra of the generalized Lie algebroid $((F,\nu ,N),[,]_{F,h},(\rho
,\eta ))$.
\end{remark}
\begin{proposition}
Let $(F,\nu ,N)$ be a vector bundle given by the diagram (\ref{diag000}). If for any $%
(V,\eta _{V})\in \lbrack \mathcal{A}_{N}]$ we have a biadditive operation 
\begin{equation*}
\begin{array}{ccc}
\Gamma (F_{\mid V},\nu ,V)\times \Gamma (F_{\mid V},\nu ,V) & ^{%
\underrightarrow{~\ \ [,]_{F_{\mid V,h}}~\ \ }} & \Gamma (F_{\mid V},\nu ,V)\\ 
(u,v) & \longmapsto & \ [u,v]_{F_{\mid V,h}}
\end{array}
,
\end{equation*}
such that the 4-tuple $(\Gamma (F_{\mid V},\nu ,V),[,]_{F_{\mid
V,h}},\Gamma (Th\circ \rho ,h\circ \eta ))$ is a generalized Lie $\mathcal{F}%
(N)_{\mid V}$-algebra, then $((F,\nu ,N),[,]_{F,h},(\rho ,\eta ))$ is a
generalized Lie algebroid, where $[,]_{F,h}$ is the biadditive operation
given by%
\begin{equation*}
\begin{array}{ccc}
\Gamma (F,\nu ,N)\times \Gamma (F,\nu ,N) & ^{\underrightarrow{~\ \
[,]_{F,h}~\ \ }} & \Gamma (F,\nu ,N) \\ 
(u,v) & \longmapsto & \ \ [u,v]_{F,h}%
\end{array}
,
\end{equation*}%
such that $[u,v]_{F,h\mid V}$ $=[u_{\mid V},v_{\mid V}]_{F_{\mid V},h},$ for
any $(V,\eta _{V})\in \lbrack \mathcal{A}_{N}].$
\end{proposition}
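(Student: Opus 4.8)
The strategy is to define the operation $[,]_{F,h}$ by gluing the given local operations, and then to verify each axiom in the definition of a generalized Lie algebroid by restricting it to a chart, where it becomes the corresponding axiom for a generalized Lie $\mathcal{F}(N)_{\mid V}$-algebra. Precisely, for $u,v\in\Gamma(F,\nu,N)$ and $(V,\eta_V)\in[\mathcal{A}_N]$ one sets $[u,v]_{F,h|V}:=[u_{\mid V},v_{\mid V}]_{F_{\mid V},h}$; since the family $\big\{[,]_{F_{\mid V},h}\big\}_{(V,\eta_V)\in[\mathcal{A}_N]}$ is compatible with restriction — in the same way that $\mathcal{F}(N)$ and $\Gamma(F,\nu,N)$ were reconstructed from their restrictions at the beginning of this section — these locally defined sections agree on the overlaps $V\cap V'$ and therefore patch together to a unique global section $[u,v]_{F,h}\in\Gamma(F,\nu,N)$ whose restriction to every chart $V$ is $[u_{\mid V},v_{\mid V}]_{F_{\mid V},h}$. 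That $[,]_{F,h}$ is biadditive is then immediate, since additivity of a map into $\Gamma(F,\nu,N)$ can be tested chart by chart, where it is the biadditivity of $[,]_{F_{\mid V},h}$.

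I would next check the anchor identity. Fix $u,v\in\Gamma(F,\nu,N)$ and $f\in\mathcal{F}(N)$, and restrict the desired equality to a chart $(V,\eta_V)$. Its left-hand side equals $[u_{\mid V},f_{\mid V}\cdot v_{\mid V}]_{F_{\mid V},h}$, and, because $\big(\Gamma(F_{\mid V},\nu,V),[,]_{F_{\mid V},h},\Gamma(Th\circ\rho,h\circ\eta)\big)$ is a generalized Lie $\mathcal{F}(N)_{\mid V}$-algebra, this is $f_{\mid V}\cdot[u_{\mid V},v_{\mid V}]_{F_{\mid V},h}+\Gamma(Th\circ\rho,h\circ\eta)(u_{\mid V})(f_{\mid V})\cdot v_{\mid V}$. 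Since $\Gamma(Th\circ\rho,h\circ\eta)$ is a modules morphism described locally by the formula of the preceding proposition — so that it commutes with restriction and the derivation it outputs is a local operator — this last expression is exactly the restriction to $V$ of $f\cdot[u,v]_{F,h}+\Gamma(Th\circ\rho,h\circ\eta)(u)(f)\cdot v$. As $V$ was arbitrary, $[u,f\cdot v]_{F,h}=f[u,v]_{F,h}+\Gamma(Th\circ\rho,h\circ\eta)(u)(f)\cdot v$ on all of $N$.

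It remains to see that $(\Gamma(F,\nu,N),[,]_{F,h})$ is a Lie $\mathcal{F}(N)$-algebra, i.e.\ that $LA_1$ and $LA_2$ hold, and again both reduce to charts. For $u\in\Gamma(F,\nu,N)$ one has $[u,u]_{F,h|V}=[u_{\mid V},u_{\mid V}]_{F_{\mid V},h}=0$ for every chart $V$, hence $[u,u]_{F,h}=0$; and for $u,v,w\in\Gamma(F,\nu,N)$ the cyclic Jacobi sum $[u,[v,w]_{F,h}]_{F,h}+[w,[u,v]_{F,h}]_{F,h}+[v,[w,u]_{F,h}]_{F,h}$ restricts on each chart $V$ — restriction commuting with the bracket — to the analogous sum formed from $[,]_{F_{\mid V},h}$, which vanishes by $LA_2$ for the local generalized Lie algebra; thus the global Jacobi sum vanishes. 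Since $(\rho,\eta)$ is, by hypothesis, a vector bundles morphism from $(F,\nu,N)$ to $(TM,\tau_M,M)$ and $h,\eta$ are diffeomorphisms, the conditions in the definition are met and $((F,\nu,N),[,]_{F,h},(\rho,\eta))$ is a generalized Lie algebroid.

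The only step that is not pure bookkeeping is the very first one — that the locally defined sections really do patch. The key fact is that each local bracket is a \emph{local} operator: if $u,v\in\Gamma(F_{\mid V},\nu,V)$ and $v$ vanishes on an open $W\subseteq V$ containing a point $x$, choose $g\in\mathcal{F}(N)$ with $g(x)=0$ and $g\equiv 1$ off a neighbourhood of $x$ contained in $W$, so that $g_{\mid V}\cdot v=v$; the anchor identity for $[,]_{F_{\mid V},h}$ then gives $[u,v]_{F_{\mid V},h}(x)=g(x)[u,v]_{F_{\mid V},h}(x)+\big(\Gamma(Th\circ\rho,h\circ\eta)(u)(g)\big)(x)\,v(x)=0$, and $LA_1$ (through antisymmetry) yields the same in the first slot, so $[u,v]_{F_{\mid V},h}(x)$ depends only on the germs of $u,v$ at $x$. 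Combined with the compatibility of the family $\big\{[,]_{F_{\mid V},h}\big\}$ under restriction and the fact that sections of $(F,\nu,N)$ are determined by their restrictions to any open cover, this makes the gluing well defined and produces a section that restricts correctly to every chart of $[\mathcal{A}_N]$. I expect this identification of the local data to be the main obstacle; all the remaining verifications are routine transcriptions of the generalized Lie $\mathcal{F}(N)_{\mid V}$-algebra axioms.
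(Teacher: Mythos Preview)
The paper states this proposition without proof: immediately after the statement it moves on to local bases and structure functions, treating the result as a routine consequence of the preceding discussion (where it was explained that $\mathcal{F}(N)$, $\Gamma(F,\nu,N)$, and the module operations are all determined by their restrictions to charts). Your argument is correct and is exactly the natural one the paper is tacitly relying on: verify each axiom of a generalized Lie algebroid by restricting to an arbitrary chart, where it becomes the corresponding axiom of the assumed generalized Lie $\mathcal{F}(N)_{\mid V}$-algebra.

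You in fact supply more than the paper does. Your locality argument via bump functions and the anchor identity, showing that $[u,v]_{F_{\mid V},h}(x)$ depends only on the germs of $u,v$ at $x$, is a genuine detail the paper omits entirely; it is what makes the gluing well defined once one assumes the local brackets agree under restriction to smaller charts. That compatibility assumption is not spelled out in the proposition as stated, but it is clearly intended (otherwise the defining formula $[u,v]_{F,h\mid V}=[u_{\mid V},v_{\mid V}]_{F_{\mid V},h}$ would be inconsistent), and you are right to flag it.
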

Now suppose that $\{t_{\alpha }\}$ is a local basis for the $\mathcal{F}( N) _{\mid V}$%
-module of sections of $(F_{\mid V},\nu ,V)$ and we put $[t_{\alpha
},t_{\beta }]_{F_{\mid V},h}=L_{\alpha \beta }^{\gamma }t_{\gamma }$, where $%
\alpha ,\beta ,\gamma \in \{1,\ldots ,p\}$. It is easy to see that $%
L_{\alpha \beta }^{\gamma }=-L_{\beta \alpha }^{\gamma }$. According to
Proposition \ref{Prop18}, $\Gamma(Th\circ\rho, h\circ\eta)$ is a Lie
algebras morphism ans so we obtain 
\begin{equation*}
\begin{array}{c}
\displaystyle(L_{\alpha \beta }^{\gamma }\circ h)(\rho _{\gamma }^{k}\circ
h)=(\rho _{\alpha }^{i}\circ h)\frac{\partial (\rho _{\beta }^{k}\circ h)}{%
\partial x^{i}}-(\rho _{\beta }^{j}\circ h)\frac{\partial (\rho _{\alpha
}^{k}\circ h)}{\partial x^{j}}.%
\end{array}%
\end{equation*}
The local real-valued functions $L_{\alpha \beta }^{\gamma }$ introduced in
the above are called the {\textit{structure functions}} of the generalized
Lie algebroid $((F,\nu ,N),[,]_{F,h},(\rho ,\eta ))$ (see \cite{A}-\cite{A4} for more details).

A morphism from $((F,\nu ,N),[,]_{F,h},(\rho ,\eta ))$ to $((F^{\prime },\nu
^{\prime },N^{\prime }),[,]_{F^{\prime },h^{\prime }},(\rho ^{\prime },\eta
^{\prime }))$ is a vector bundles morphism $(\varphi ,\varphi _{0})$ from $%
(F,\nu ,N)$ to $(F^{\prime },\nu ^{\prime },N^{\prime })$ such that $\varphi
_{0}$ is a diffeomorphism from $N$ to $N^{\prime }$ and the modules
morphism $\Gamma (\varphi ,\varphi _{0})$ is a Lie $\mathcal{F}(N)$-algebras
morphism from $(\Gamma (F,\nu ,N),[,]_{F,h})$ to \\$(\Gamma
(F^{\prime },\nu ^{\prime },N^{\prime }),[,]_{F^{\prime },h^{\prime
}}).$ Thus, we can discuss about the category of generalized Lie algebroids. 

Here, we present some examples of (generalized) Lie algebroids.

\begin{example}
Any Lie algebroid can be regarded as a generalized Lie algebroid.
\end{example}

\begin{example}
If $((F,\nu ,N),[,]_{F,h},(\rho ,\eta ))$ is a generalized Lie algebroid,
then we consider the pull-back vector bundle $( h^{\ast }F,h^{\ast }\nu ,M)
$. We denote by 
$( t_{\alpha }) _{\alpha \in \overline{1,n}}$ the natural base of the
generalized Lie $\mathcal{F}( N) _{\mid V}$-algebra $( \Gamma ( F_{\mid
V},\nu ,V) ,[,]_{F_{\mid V},h},\Gamma ( Th\circ \rho ,h\circ \eta )
)$, where $( V,\eta _{V}) \in [ \mathcal{A}_{N}] $. Also, we denote by $( \frac{%
\partial }{\partial x^{i}}) _{i\in \overline{1,m}}$ the natural base of the
generalized Lie $\mathcal{F}( M) _{\mid U}$-algebra $( \Gamma ( TM_{\mid
U},\tau _{M},U) ,[,]_{TM_{\mid U}},\Gamma ( Id_{TM},Id_{M}) )$, where $( U,\xi _{U}) \in [ \mathcal{A}_{M}] $. Let 
$(\overset{h^{\ast }F}{\rho },Id_{M})$ be the vector bundles morphism from $%
( h^{\ast }F,h^{\ast }\nu ,M) $ to $( TM,\tau _{M},M) $ locally given by%
\begin{equation*}
\begin{array}{rcl}
h^{\ast }F_{\mid U} & ^{\underrightarrow{\overset{h^{\ast }F}{\rho }}} & TM
\\ 
\displaystyle Z^{\alpha }T_{\alpha }( x) & \longmapsto & \displaystyle(
Z^{\alpha }\cdot \rho _{\alpha }^{i}\circ h) \frac{\partial }{\partial x^{i}}%
( x)%
\end{array}
.
\end{equation*}
We consider the biadditive operation 
\begin{equation*}
\begin{array}{ccc}
\Gamma ( h^{\ast }F_{\mid U},h^{\ast }\nu ,U) \times \Gamma ( h^{\ast
}F_{\mid U},h^{\ast }\nu ,U) & ^{\underrightarrow{~\ \ [ ,] _{h^{\ast
}F_{\mid U}}~\ \ }} & \Gamma ( h^{\ast }F_{\mid U},h^{\ast }\nu ,U)%
\end{array}
,
\end{equation*}
defined by 
\begin{align*}
&[T_{\alpha },f_{\mid U}\cdot T_{\beta }]_{h^{\ast }F_{\mid U}} =f_{\mid
U}\cdot ( L_{\alpha \beta }^{\gamma }\circ h) _{\mid U}T_{\gamma }+( \rho
_{\alpha }^{i}\circ h) _{\mid U}\frac{\partial f_{\mid U}}{\partial x^{i}}%
T_{\beta }, \\
&[f_{\mid U}\cdot T_{\alpha },T_{\beta }]_{h^{\ast }F_{\mid U}}=-[T_{\beta
},f_{\mid U}\cdot T_{\alpha }]_{h^{\ast }F_{\mid U}},
\end{align*}%
for any $f\in \mathcal{F}(M)$. After some calculations, we derive that the
4-tuple 
\begin{equation*}
(\Gamma ( h^{\ast }F_{\mid U},h^{\ast }\nu ,U) ,[ ,] _{h^{\ast
}F_{\mid U}},\Gamma ( \overset{h^{\ast }F}{\rho },Id_{M})),
\end{equation*}
is a generalized Lie $\mathcal{F}(M)_{\mid U}$-algebra. So, using the
biadditive operation
\begin{equation*}
\begin{array}{ccc}
\Gamma ( h^{\ast }F,h^{\ast }\nu ,M) \times \Gamma ( h^{\ast }F,h^{\ast }\nu
,M) & ^{\underrightarrow{~\ \ [ ,] _{h^{\ast }F}~\ \ }} & \Gamma ( h^{\ast
}F,h^{\ast }\nu ,M) \\ 
( T,Z) & \longmapsto & \ [ T,Z] _{h^{\ast }F}%
\end{array}
,
\end{equation*}%
given by $\ [ T,Z] _{h^{\ast }F\mid U}$ $=[ T_{\mid U},Z_{\mid U}]
_{h^{\ast }F_{\mid U}},$ for any $( U,\xi _{U}) \in [ \mathcal{A}_{M}] ,$ it
results that $$( ( h^{\ast }F,h^{\ast }\nu ,M) ,[ ,] _{h^{\ast }F},( \overset{%
h^{\ast }F}{\rho },Id_{M}) ),$$ is a Lie algebroid which is called the
pull-back Lie algebroid of the generalized Lie algebroid $$( ( F,\nu ,N) ,[
,] _{F,h},( \rho ,\eta ) ).$$
\end{example}
\begin{example}
We consider the vector bundle $( TN,\tau _{N},N) 
$ given by the diagram: 
\begin{equation*}
\begin{array}{c}
\begin{array}[b]{ccccc}
TN & ^{\underrightarrow{~\ \ \ T\eta \ \ \ \ }} & TM & ^{\underrightarrow{~\
\ \ Th\ \ \ \ }} & ( TN,[ ,] _{TN}) \\ 
~\ \ \downarrow \tau _{N} &  & ~\ \ \downarrow \tau _{M} &  & ~\ \
\downarrow \tau _{N} \\ 
N & ^{\underrightarrow{~\ \ \ \eta ~\ \ }} & M & ^{\underrightarrow{~\ \ \
h~\ \ }} & N%
\end{array}%
\end{array}
,
\end{equation*}%
where $M$ and $N$ are two manifolds and $h$ and $\eta $ are two diffeomorphisms. We denote by $(
t_{\alpha }) _{\alpha \in \overline{1,n}}$ a base of the $\mathcal{F}( N)
_{\mid V}$-module $ \Gamma ( TN_{\mid V},\tau _{N},V)$, where $(
V,\eta _{V}) \in [ \mathcal{A}_{N}]$. Now, we consider the biadditive
operation\textrm{\ }%
\begin{equation*}
\begin{array}{ccc}
\Gamma ( TN_{\mid V},\tau _{N},V) \times \Gamma ( TN_{\mid V},\tau _{N},V) & 
^{\underrightarrow{~\ \ [ ,] _{TN_{\mid V},h}~\ \ }} & ( TN_{\mid V},\tau
_{N},V) \\ 
( u,v) & \longmapsto & \ [ u,v] _{TN_{\mid V},h}%
\end{array}
,
\end{equation*}%
defined by%
\begin{align*}
[ t_{\alpha },f_{\mid V}\cdot t_{\beta }] _{TN_{\mid V},h} & =\tilde{\theta}%
( [\theta (t_{\alpha }), \theta( f_{\mid V}\cdot t_{\beta })] _{TN_{\mid
V}}),\\ 
[ f_{\mid V}\cdot t_{\alpha },t_{\beta }] _{TN_{\mid V},h} & =- [ t_{\beta
},f_{\mid V}\cdot t_{\alpha }] _{TN_{\mid V},h},
\end{align*}
where $\tilde{\theta}:=\Gamma ( T( h \circ\eta) ^{-1},( h\circ\eta) ^{-1})$
and $\theta:=\Gamma ( Th\circ T\eta ,h\circ \eta )$. We denoted by $( \frac{\partial }{%
\partial x^{i}}) _{i\in \overline{1,n}}$ the natural base for the $\mathcal{F%
}( M) _{\mid U}$-module $ \Gamma ( TM_{\mid U},\tau _{M},U)$, where $( U,\xi _{U}) \in
[ \mathcal{A}_{M}] $. If 
$[ t_{\alpha },t_{\beta }] _{TN_{\mid V},h}=L_{\alpha \beta }^{\gamma
}t_{\gamma },$ then 
\begin{equation*}
\begin{array}{c}
L_{\alpha \beta }^{\gamma }=\tilde{\theta}_{\tilde{j}}^{\gamma }( \theta
_{\alpha }^{\tilde{\imath}}\frac{\partial \theta _{\beta }^{\tilde{j}}}{%
\partial \varkappa ^{\tilde{\imath}}}-\theta _{\beta }^{\tilde{\imath}}\frac{%
\partial \theta _{\alpha }^{\tilde{j}}}{\partial x^{\tilde{\imath}}})
,~\alpha ,\beta ,\gamma \in \overline{1,n},%
\end{array}%
\end{equation*}%
where $\theta _{\alpha }^{\tilde{\imath}},~\tilde{\imath},\alpha \in 
\overline{1,n}$, are real local functions such that $\theta (t_{\alpha })
=\theta _{\alpha }^{\tilde{\imath}}\frac{\partial }{\partial x^{\tilde{\imath%
}}}$ and $\tilde{\theta}_{\tilde{j}}^{\gamma },~\tilde{j},\gamma \in 
\overline{1,n} $ are real local functions such that $\tilde{\theta}( \frac{%
\partial }{\partial x^{\tilde{j}}}) =\tilde{\theta}_{\tilde{j}}^{\gamma
}t_{\gamma } $. Obvious that $\tilde{\theta}(\theta(t_\alpha))=t_\alpha$ and 
$\tilde{\theta}^{\gamma}_{\tilde{j}}\theta_{\gamma}^{\tilde{i}}=\delta_{%
\tilde{j}}^{\tilde{i}}$. For any $u\in \Gamma ( TN,\tau _{N},N) ,$ we obtain 
\begin{equation*}
[ u_{\mid V},u_{\mid V}] _{TN_{\mid V},h}=\tilde{\theta}([\theta(u_{\mid
V}), \theta(u_{\mid V})] _{TN_{\mid V}})=\tilde{\theta}(0)=0.
\end{equation*}
Similarly, for any $u,v\in \Gamma ( TN,\tau _{N},N) $ and $f\in \mathcal{F}(N)$, we get
\begin{equation*}
\begin{array}{l}
[ u_{\mid V},f_{\mid V}\cdot v_{\mid V}] _{TN_{\mid V},h}=\tilde{\theta}(
[\theta(u_{\mid V}), \theta(f_{\mid V}\cdot v_{\mid V}) ] _{TN_{\mid V}})=%
\tilde{\theta}( [ \theta(u_{\mid V}),f_{\mid V}\cdot \theta(v_{\mid V})]
_{TN_{\mid V}}) \\ 
=\tilde{\theta}( f_{\mid V}\cdot [ \theta (u_{\mid V}),\theta(v_{\mid V})]
_{TN_{\mid V}}) +\tilde{\theta}(\theta(u_{\mid V}) ( f_{\mid V})
\cdot\theta(v_{\mid V})) \\ 
=f_{\mid V}\cdot\tilde{\theta}([\theta(u_{\mid V}), \theta(v_{\mid V})]
_{TN_{\mid V}})+\theta(u_{\mid V}) (f_{\mid V}) \cdot \tilde{\theta}%
(\theta(v_{\mid V})) \\ 
=f_{\mid V}\cdot [ u_{\mid V},v_{\mid V}] _{TN_{\mid V},h}+ \theta(u_{\mid
V})( f_{\mid V})\cdot v_{\mid V}.%
\end{array}%
\end{equation*}
Relation (\ref{15}) gives us 
\begin{equation*}
\begin{array}{c}
[ z_{\mid V},f_{\mid V}\cdot v_{\mid V}] _{TN_{\mid V},h}=f_{\mid V}\cdot [
z_{\mid V},v_{\mid V}] _{TN_{\mid V},h}+( z^{\alpha }\rho _{\alpha }^{i}( 
\frac{\partial f\circ h}{\partial x^{i}}\circ h^{-1}) _{\mid V}) \cdot
v_{\mid V},%
\end{array}%
\end{equation*}%
for any $z,v\in \Gamma (TN,\tau _{N},N) $ and $f\in \mathcal{F}(N)$. Also, relation
\begin{equation*}
\theta[ u_{\mid V},v_{\mid V}] _{TN_{\mid V},h}=\theta\{ \tilde{\theta}( [
\theta( u_{\mid V}),\theta(v_{\mid V})]_{TN_{\mid V}}) \} =[\theta(u_{\mid
V}), \theta(v_{\mid V})]_{TN_{\mid V}},
\end{equation*}
implies that 
\begin{equation*}
[u_{\mid V},[v_{\mid V},z_{\mid V}] _{TN_{\mid V},h}] _{TN_{\mid V},h}=%
\tilde{\theta}([\theta(u_{\mid V}), [\theta(v_{\mid V}), \theta(z_{\mid
V})]_{TN_{\mid V}}] _{TN_{\mid V}}),
\end{equation*}%
for any $u,v,z\in \Gamma ( TN,\tau _{N},N)$. Now, using the Jacobi identity
for the generalized Lie $\mathcal{F}( N)_{\mid V}$-algebra $( \Gamma (
TN_{\mid V},\tau _{N},V) ,[ ,] _{TN_{\mid V}}) ,$ we obtain%
\begin{align*}
&[\theta(u_{\mid V}), [\theta(v_{\mid V}), \theta(z_{\mid V})] _{TN_{\mid
V}}] _{TN_{\mid V}}+[\theta(v_{\mid V}), [\theta(z_{\mid V}), \theta(u_{\mid
V})] _{TN_{\mid V}}] _{TN_{\mid V}} \\
&+[\theta(z_{\mid V}), [\theta(u_{\mid V}), \theta(v_{\mid V})] _{TN_{\mid
V}}] _{TN_{\mid V}} =0,\ \ \ \forall u,v,z\in \Gamma ( TN,\tau _{N},N) .
\end{align*}
Two last equations give us 
\begin{align*}
&[ u_{\mid V},[ v_{\mid V},z_{\mid V}] _{TN_{\mid V},h}] _{TN_{\mid V},h}+[
z_{\mid V},[ u_{\mid V},v_{\mid V}] _{TN_{\mid V},h}] _{TN_{\mid V},h} \\
&+[ v_{\mid V},[ z_{\mid V},u_{\mid V}] _{TN_{\mid V},h}] _{TN_{\mid V},h}=0,
\end{align*}%
and so Jacobi identity holds for $%
[,]_{TN_{\mid V},h}$. Thus, the 4-tuple $$(\Gamma ( TN_{\mid V},\tau
_{N},V) ,[,]_{TN_{\mid V},h},\Gamma ( Th\circ T\eta ,h\circ \eta )
),$$ is a generalized Lie $\mathcal{F}(N)_{\mid V}$-algebra. Therefore, using the
biadditive operation 
\begin{equation*}
\begin{array}{ccc}
\Gamma ( TN,\nu ,N) \times \Gamma ( TN,\nu ,N) & ^{\underrightarrow{~\ \ [
,] _{TN,h}~\ \ }} & \Gamma ( TN,\nu ,N) \\ 
( u,v) & \longmapsto & \ [ u,v] _{TN,h}%
\end{array}
,
\end{equation*}%
given by $[ u,v] _{TN,h\mid V}$ $=[ u_{\mid V},v_{\mid V}] _{TN_{\mid
V},h}, $ for any $( V,\eta _{V}) \in [ \mathcal{A}_{N}] ,$ it results that $$
( ( TN,\tau _{N},N) ,[ ,] _{TN,h},( T\eta ,\eta ) ),$$ is a generalized Lie
algebroid.
\end{example}

For any difeomorphisms $\eta $ and $h,$ new and interesting generalized Lie
algebroids structures for the tangent vector bundle $( TN,\tau _{N},N) $ are
obtained$.$ In particular, using arbitrary isometries (symmetries,
translations, rotations,...) for the Euclidean $3$-dimensional space $\Sigma
,$ and arbitrary basis for the module of sections we obtain a lot of
generalized Lie algebroids structures for the tangent vector bundle $(
T\Sigma ,\tau _{\Sigma },\Sigma ) $.

\begin{example}
Let $((F,\nu ,N),[,]_{F},(\rho ,Id_{N}))$ be a Lie algebroid, $(U,\xi
_{U})\in \lbrack \mathcal{A}_{N}]$ and $(V,\eta _{V})\in \lbrack \mathcal{A}%
_{N}]$ such that $U\subseteq h^{-1}(V)$. We denote by $(t_{\alpha })_{\alpha
\in \overline{1,n}}$ the natural base of the generalized Lie $\mathcal{F}%
(N)_{\mid V}$-algebra $(\Gamma (F_{\mid V},\nu ,V),[,]_{F_{\mid
V}},\Gamma (\rho ,Id_{N}))$. If $h\in Diff(N),$ then we consider the
biadditive operation\textrm{\ }%
\begin{equation*}
\begin{array}{ccc}
\Gamma (F_{\mid V},\nu ,V)\times \Gamma (F_{\mid V},\nu ,V) & ^{%
\underrightarrow{~\ \ [,]_{F_{\mid V},h}~\ \ }} & \Gamma (F_{\mid V},\nu ,V)
\\ 
(u_{\mid V},v_{\mid V}) & \longmapsto & \ [u_{\mid V},v_{\mid V}]_{F_{\mid
V},h}%
\end{array}%
,
\end{equation*}%
defined by%
\begin{equation}
\begin{array}{c}
\lbrack t_{\alpha },f_{\mid V}\cdot t_{\beta }]_{F_{\mid V},h}=f_{\mid
V}\cdot \lbrack t_{\alpha },t_{\beta }]_{F_{\mid V}}+\rho _{\alpha }^{i}(%
\frac{\partial f\circ h}{\partial x^{i}}\circ h^{-1})_{\mid V}\cdot t_{\beta
},%
\end{array}
\label{TM}
\end{equation}%
and 
\begin{equation*}
\lbrack f_{\mid V}\cdot t_{\alpha },t_{\beta }]_{F_{\mid V},h}=-[t_{\beta
},f_{\mid V}\cdot t_{\alpha }]_{F_{\mid V},h},
\end{equation*}
for any $f\in \mathcal{F}(N)$. Easily, we obtain  $[f_{\mid V}\cdot
t_{\alpha },f_{\mid V}\cdot t_{\alpha }]_{F_{\mid V},h}=0$ and consequently
\begin{equation*}
\begin{array}{c}
\lbrack u_{\mid V},u_{\mid V}]_{F_{\mid V},h}=0,\ \ \ \forall u\in \Gamma
(F,\nu ,N).%
\end{array}%
\end{equation*}%
Since $((F,\nu ,N),[,]_{F},(\rho ,Id_{N}))$ is a Lie algebroid, then 
\begin{equation*}
\begin{array}{c}
\lbrack t_{\alpha },[t_{\beta },t_{\gamma }]_{F_{\mid V}}]_{F_{\mid
V}}+[t_{\gamma },[t_{\alpha },t_{\beta }]_{F_{\mid V}}]_{F_{\mid
V}}+[t_{\beta },[t_{\gamma },t_{\alpha }]_{F_{\mid V}}]_{F_{\mid V}}=0,%
\end{array}%
\end{equation*}%
which gives us 
\begin{align}
& L_{\beta \gamma }^{\theta }L_{\alpha \theta }^{\mu }+\rho _{\alpha }^{i}(%
\frac{\partial L_{\beta \gamma }^{\theta }\circ h}{\partial x^{i}}\circ
h^{-1})_{\mid V}+L_{\alpha \beta }^{\theta }L_{\gamma \theta }^{\mu }+\rho
_{\gamma }^{i}(\frac{\partial L_{\alpha \beta }^{\theta }\circ h}{\partial
x^{i}}\circ h^{-1})_{\mid V}  \notag  \label{PP1} \\
& +L_{\gamma \alpha }^{\theta }L_{\beta \theta }^{\mu }+\rho _{\beta }^{i}(%
\frac{\partial L_{\gamma \alpha }^{\theta }\circ h}{\partial x^{i}}\circ
h^{-1})_{\mid V}=0,
\end{align}%
where $\rho _{\alpha }^{i}$ and $L_{\alpha \beta }^{\theta }$ are
real-valued functions defined on $U$. As%
\begin{equation*}
\begin{array}{l}
\Gamma (Th\circ \rho ,h)[t_{\alpha },t_{\beta }]_{F_{\mid V}}=\Gamma
(Th,h)[\Gamma (\rho ,Id_{N})t_{\alpha },\Gamma (\rho ,Id_{N})t_{\beta
}]_{TN_{\mid V}} \\ 
=[\Gamma (Th,h)\circ \Gamma (\rho ,Id_{N})t_{\alpha },\Gamma (Th,h)\circ
\Gamma (\rho ,Id_{N})t_{\beta }]_{TN_{\mid V}} \\ 
=[\Gamma (Th\circ \rho ,h)t_{\alpha },\Gamma (Th\circ \rho ,h)t_{\beta
}]_{TN_{\mid V}},%
\end{array}%
\end{equation*}%
we get
\begin{equation*}
\begin{array}{c}
L_{\alpha \beta }^{\gamma }\rho _{\gamma }^{k}(\frac{\partial h^{\tilde{%
\imath}}}{\partial x^{k}}\circ h^{-1})_{\mid V}\frac{\partial }{\partial
\varkappa ^{\tilde{\imath}}}=(\theta _{\alpha }^{\tilde{j}}\frac{\partial
\theta _{\beta }^{\tilde{\imath}}}{\partial \varkappa ^{\tilde{j}}}-\theta
_{\beta }^{\tilde{j}}\frac{\partial \theta _{\alpha }^{\tilde{\imath}}}{%
\partial \varkappa ^{\tilde{j}}})\frac{\partial }{\partial \varkappa ^{%
\tilde{\imath}}} \\ 
\Updownarrow \\ 
L_{\alpha \beta }^{\gamma }\rho _{\gamma }^{k}(\frac{\partial h^{\tilde{%
\imath}}}{\partial x^{k}}\circ h^{-1})_{\mid V}\frac{\partial }{\partial
\varkappa ^{\tilde{\imath}}}=(\theta _{\alpha }^{\tilde{j}}\frac{\partial
\lbrack \rho _{\beta }^{k}(\frac{\partial h^{\tilde{\imath}}}{\partial x^{k}}%
\circ h^{-1})_{\mid V}]}{\partial \varkappa ^{\tilde{j}}}-\theta _{\beta }^{%
\tilde{j}}\frac{\partial \lbrack \rho _{\alpha }^{k}(\frac{\partial h^{%
\tilde{\imath}}}{\partial x^{k}}\circ h^{-1})_{\mid V}]}{\partial \varkappa
^{\tilde{j}}})\frac{\partial }{\partial \varkappa ^{\tilde{\imath}}} \\ 
\Updownarrow \\ 
L_{\alpha \beta }^{\gamma }\rho _{\gamma }^{k}(\frac{\partial h^{\tilde{%
\imath}}}{\partial x^{k}}\circ h^{-1})_{\mid V}\frac{\partial }{\partial
\varkappa ^{\tilde{\imath}}}=(\theta _{\alpha }^{\tilde{j}}\frac{\partial
\rho _{\beta }^{k}}{\partial \varkappa ^{\tilde{j}}}-\theta _{\beta }^{%
\tilde{j}}\frac{\partial \rho _{\alpha }^{k}}{\partial \varkappa ^{\tilde{j}}%
})(\frac{\partial h^{\tilde{\imath}}}{\partial x^{k}}\circ h^{-1})_{\mid V}%
\frac{\partial }{\partial \varkappa ^{\tilde{\imath}}} \\ 
\Updownarrow \\ 
L_{\alpha \beta }^{\gamma }\rho _{\gamma }^{k}=\theta _{\alpha }^{\tilde{j}}%
\frac{\partial \rho _{\beta }^{k}}{\partial \varkappa ^{\tilde{j}}}-\theta
_{\beta }^{\tilde{j}}\frac{\partial \rho _{\alpha }^{k}}{\partial \varkappa
^{\tilde{j}}} \\ 
\Updownarrow \\ 
L_{\alpha \beta }^{\gamma }\rho _{\gamma }^{k}=\rho _{\alpha }^{j}(\frac{%
\partial \rho _{\beta }^{k}\circ h}{\partial x^{j}}\circ h^{-1})_{\mid
V}-\rho _{\beta }^{j}(\frac{\partial \rho _{\alpha }^{k}\circ h}{\partial
x^{j}}\circ h^{-1})_{\mid V} \\ 
\Updownarrow%
\end{array}%
\end{equation*}%
\begin{equation}
\begin{array}{c}
(L_{\alpha \beta }^{\gamma }\circ h)(\rho _{\gamma }^{k}\circ h)=(\rho
_{\alpha }^{j}\circ h)\frac{\partial \rho _{\beta }^{k}\circ h}{\partial
x^{j}}-(\rho _{\beta }^{j}\circ h)\frac{\partial \rho _{\alpha }^{k}\circ h}{%
\partial x^{j}}.%
\end{array}
\label{PP2}
\end{equation}%
Using (\ref{TM}) we get 
\begin{align*}
& [t_{\alpha },[t_{\beta },f_{\mid V}t_{\gamma }]_{F_{\mid V},h}]_{F_{\mid
V},h}=f_{\mid V}L_{\beta \gamma }^{\theta }L_{\alpha \theta }^{\mu }t_{\mu
}+\rho _{\alpha }^{i}(\frac{\partial f\circ h}{\partial x^{i}}\circ
h^{-1})_{\mid V}L_{\beta \gamma }^{\theta }t_{\theta } \\
& +f_{\mid V}\rho _{\alpha }^{i}(\frac{\partial L_{\beta \gamma }^{\theta
}\circ h}{\partial x^{i}}\circ h^{-1})_{\mid V}t_{\theta }+\rho _{\beta
}^{j}(\frac{\partial f\circ h}{\partial x^{j}}\circ h^{-1})_{\mid
V}L_{\alpha \gamma }^{\theta }t_{\theta } \\
& +\rho _{\alpha }^{i}(\frac{\partial \rho _{\beta }^{j}\circ h}{\partial
x^{i}}\circ h^{-1})_{\mid V}(\frac{\partial f\circ h}{\partial x^{j}}\circ
h^{-1})_{\mid V}t_{\gamma }+\rho _{\beta }^{j}\rho _{\alpha }^{i}\frac{%
\partial }{\partial x^{i}}(\frac{\partial f\circ h}{\partial x^{j}}\circ
h^{-1})_{\mid V}t_{\gamma },
\end{align*}%
\begin{align*}
& [f_{\mid V}t_{\gamma },[t_{\alpha },t_{\beta }]_{F_{\mid V},h}]_{F_{\mid
V},h}=f_{\mid V}L_{\alpha \beta }^{\theta }L_{\gamma \theta }^{\mu }t_{\mu
}-L_{\alpha \beta }^{\theta }\rho _{\theta }^{i}(\frac{\partial f\circ h}{%
\partial x^{i}}\circ h^{-1})_{\mid V}t_{\gamma } \\
& +f_{\mid V}\rho _{\gamma }^{i}(\frac{\partial L_{\alpha \beta }^{\theta
}\circ h}{\partial x^{i}}\circ h^{-1})_{\mid V}t_{\theta },
\end{align*}%
and 
\begin{align*}
& [t_{\beta },[f_{\mid V}t_{\gamma },t_{\alpha }]_{F_{\mid V},h}]_{F_{\mid
V},h}=f_{\mid V}L_{\gamma \alpha }^{\theta }L_{\beta \theta }^{\mu }t_{\mu
}-\rho _{\beta }^{i}(\frac{\partial f\circ h}{\partial x^{i}}\circ
h^{-1})_{\mid V}L_{\alpha \gamma }^{\theta }t_{\theta } \\
& -\rho _{\alpha }^{j}(\frac{\partial f\circ h}{\partial x^{j}}\circ
h^{-1})_{\mid V}L_{\beta \gamma }^{\theta }t_{\theta }-\rho _{\beta }^{i}(%
\frac{\partial \rho _{\alpha }^{j}\circ h}{\partial x^{i}}\circ
h^{-1})_{\mid V}(\frac{\partial f\circ h}{\partial x^{j}}\circ h^{-1})_{\mid
V}t_{\gamma } \\
& -\rho _{\alpha }^{i}\rho _{\beta }^{j}\frac{\partial }{\partial x^{j}}(%
\frac{\partial f\circ h}{\partial x^{i}}\circ h^{-1})_{\mid V}t_{\gamma
}+f_{\mid V}\rho _{\beta }^{i}(\frac{\partial L_{\gamma \alpha }^{\theta
}\circ h}{\partial x^{i}}\circ h^{-1})_{\mid V}t_{\theta }.
\end{align*}%
Summing above three equations and using (\ref{PP1}) and (\ref{PP2}) imply
that 
\begin{align*}
& [t_{\alpha },[t_{\beta },f_{\mid V}t_{\gamma }]_{F_{\mid V},h}]_{F_{\mid
V},h}+[f_{\mid V}t_{\gamma },[t_{\alpha },t_{\beta }]_{F_{\mid
V},h}]_{F_{\mid V},h} \\
& +[t_{\beta },[f_{\mid V}t_{\gamma },t_{\alpha }]_{F_{\mid V},h}]_{F_{\mid
V},h}=0.
\end{align*}%
So, we obtain 
\begin{align*}
& [u_{\mid V},[v_{\mid V},z_{\mid V}]_{F_{\mid V},h}]_{F_{\mid
V},h}+[z_{\mid V},[u_{\mid V},v_{\mid V}]_{F_{\mid V},h}]_{F_{\mid V},h} \\
& +[v_{\mid V},[z_{\mid V},u_{\mid V}]_{F_{\mid V},h}]_{F_{\mid V},h}=0,
\end{align*}%
for any $u,v,z\in \Gamma (F,\nu ,N).$ Therefore, the 4-tuple $(\Gamma
(F_{\mid V},\nu ,V),[,]_{F_{\mid V},h})$ is a generalized Lie $%
\mathcal{F}(N)_{\mid V}$-algebra. Moreover, using the biadditive operation given by%
\begin{equation*}
\begin{array}{ccc}
\Gamma (F,\nu ,N)\times \Gamma (F,\nu ,N) & ^{\underrightarrow{~\ \
[,]_{F,h}~\ \ }} & \Gamma (F,\nu ,N) \\ 
(u,v) & \longmapsto & \ [u,v]_{F,h}%
\end{array}%
,
\end{equation*}%
given by $[u,v]_{F,h\mid V}$ $=[u_{\mid V},v_{\mid V}]_{F_{\mid V},h},$ for
any $(V,\eta _{V})\in \lbrack \mathcal{A}_{N}]$, we derive that $((F,\nu
,N),[,]_{F,h},(T\eta \circ \rho ,\eta ))$ is a generalized Lie algebroid.
\end{example}
We remark that for any Lie algebroid $( (F,\nu ,N),[ ,] _{F},( \rho ,Id_{N})
) $ and for any $h\in Diff( N) $ we obtain a new generalized Lie algebroid $%
( (F,\nu ,N),[ ,] _{F,h},( \rho ,Id_{N}) )$.

In the future we will use the same notations for the global Lie bracket and
local Lie bracket, but we understand the difference with respect to the
context. 

\section{Exterior differential calculus}

In the next we consider $\mathcal{F}$ as a commutative ring. In this section, we propose an exterior differential calculus in the general
framework of a generalized Lie algebras. Note that for any $q\in \mathbb{N}%
^* $ we denote by $(\Sigma _{q},\circ ) $ the permutations group of the set $%
\{ 1,2,\cdots ,q\}$.

\begin{definition}
Let $( A,[ ,] _{A},\rho )$ be a generalized Lie algebra. A $q$-linear operation 
\begin{equation*}
\begin{array}{ccc}
A^{q} & ^{\underrightarrow{\ \ \omega \ \ }} & \mathcal{F} \\ 
( z_{1},\cdots,z_{q}) & \longmapsto & \omega ( z_{1},\cdots,z_{q})%
\end{array}
,
\end{equation*}%
such that 
\begin{equation*}
\begin{array}{c}
\omega ( z_{\sigma ( 1) },\cdots,z_{\sigma ( q) }) =sgn( \sigma )\omega (
z_{1},\cdots,z_{q}),%
\end{array}%
\end{equation*}%
for any $z_{1},\cdots,z_{q}\in A$ and for any $\sigma \in \Sigma _{q}$, will
be called a form of degree $q$ (or $q$-form) and the set of $q$-forms will be
denoted by $\Lambda^q(A)$.
\end{definition}
Using the above definition, if $\omega \in \Lambda ^{q}( A)$, then $\omega (
z_{1},\cdots,z,\cdots,z,\cdots,z_{q}) =0.$ Therefore, if $\omega \in \Lambda
^{q}( A) $, then 
\begin{equation*}
\omega ( z_{1},\cdots,z_{i},\cdots,z_{j},\cdots, z_{q}) =-\omega (
z_{1},\cdots,z_{j},\cdots,z_{i},\cdots,z_{q}) .
\end{equation*}

\begin{proposition}
If $q\in \mathbb{N}$, then $\Lambda ^{q}( A) $ is an $%
\mathcal{F}$-module.
\end{proposition}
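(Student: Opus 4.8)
The plan is to realize $\Lambda^q(A)$ as an $\mathcal{F}$-submodule of the $\mathcal{F}$-module of all set maps $A^q\to\mathcal{F}$, where $\mathcal{F}$ acts by pointwise operations (this pointwise $\mathcal{F}$-module structure makes sense precisely because $\mathcal{F}$ is commutative, as assumed throughout this section). The case $q=0$ is clear since $\Lambda^0(A)=\mathcal{F}$, so I may assume $q\geq 1$. First I would fix the candidate operations: for $\omega,\omega'\in\Lambda^q(A)$ and $f\in\mathcal{F}$, set
\begin{equation*}
(\omega+\omega')(z_1,\dots,z_q)=\omega(z_1,\dots,z_q)+\omega'(z_1,\dots,z_q),\qquad (f\cdot\omega)(z_1,\dots,z_q)=f\cdot\omega(z_1,\dots,z_q),
\end{equation*}
for all $z_1,\dots,z_q\in A$, and let $0\in\Lambda^q(A)$ be the constant zero map and $(-\omega)(z_1,\dots,z_q)=-\omega(z_1,\dots,z_q)$.

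The first thing to check is that these operations do not leave $\Lambda^q(A)$. That $\omega+\omega'$ and $f\cdot\omega$ are again $q$-linear follows argument-by-argument from the ring axioms of $\mathcal{F}$: additivity in each slot uses associativity/commutativity of $+$ and distributivity in $\mathcal{F}$, while $\mathcal{F}$-homogeneity in each slot, namely $(f\cdot\omega)(\dots,gz_i,\dots)=f\cdot(g\cdot\omega(\dots,z_i,\dots))=(fg)\cdot\omega(\dots)=(gf)\cdot\omega(\dots)=g\cdot(f\cdot\omega)(\dots)$, is exactly the point where the commutativity of $\mathcal{F}$ is needed. For the alternating property, given $\sigma\in\Sigma_q$ one computes $(\omega+\omega')(z_{\sigma(1)},\dots,z_{\sigma(q)})=sgn(\sigma)\omega(z_1,\dots,z_q)+sgn(\sigma)\omega'(z_1,\dots,z_q)=sgn(\sigma)(\omega+\omega')(z_1,\dots,z_q)$ and likewise $(f\cdot\omega)(z_{\sigma(1)},\dots,z_{\sigma(q)})=f\cdot sgn(\sigma)\omega(z_1,\dots,z_q)=sgn(\sigma)(f\cdot\omega)(z_1,\dots,z_q)$, using that $sgn(\sigma)=\pm 1_{\mathcal{F}}$ is central. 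The zero map is trivially $q$-linear and alternating, and $-\omega$ lies in $\Lambda^q(A)$ since $-\omega=(-1_{\mathcal{F}})\cdot\omega$.

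Finally I would verify the module axioms. Because all operations are defined slotwise with values in the ring $\mathcal{F}$, each axiom — $(\Lambda^q(A),+)$ an abelian group with neutral element $0$ and inverses $-\omega$; $f\cdot(\omega+\omega')=f\cdot\omega+f\cdot\omega'$; $(f+g)\cdot\omega=f\cdot\omega+g\cdot\omega$; $(fg)\cdot\omega=f\cdot(g\cdot\omega)$; and $1_{\mathcal{F}}\cdot\omega=\omega$ — reduces, upon evaluating at an arbitrary tuple $(z_1,\dots,z_q)$, to the corresponding identity in $\mathcal{F}$, and hence holds. I do not expect any genuine obstacle; the only step deserving care is the closure of $\Lambda^q(A)$ under $f\cdot(-)$, in particular the preservation of $\mathcal{F}$-homogeneity and of antisymmetry, both resting on the commutativity of $\mathcal{F}$ fixed at the start of the section.
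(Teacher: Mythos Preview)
Your argument is correct and is exactly the routine verification one would expect; the paper itself states this proposition without proof, so there is nothing further to compare.
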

\begin{definition}
If $\omega \in \Lambda ^{q}( A) $ and $\theta \in \Lambda ^{r}( A) $, then
the $( q+r) $-form $\omega \wedge \theta $ defined by%
\begin{align}  \label{form1}
\omega \wedge \theta ( z_{1},\cdots ,z_{q+r})&=\hspace{-.8cm} \underset{%
\underset{\sigma ( q+1) <\cdots <\sigma ( q+r) }{ {\sigma ( 1) <\cdots
<\sigma ( q)} }}{\sum }sgn( \sigma ) \omega ( z_{\sigma ( 1) },\cdots
,z_{\sigma ( q) }) \cdot\theta ( z_{\sigma ( q+1) },\cdots ,z_{\sigma ( q+r)
})  \notag \\
&\!\!\!=\!\!\frac{1}{q!r!}\underset{\sigma \in \Sigma _{q+r}}{\sum }sgn(
\sigma ) \omega ( z_{\sigma ( 1) },\cdots ,z_{\sigma ( q) })\cdot\theta (
z_{\sigma ( q+1) },\cdots ,z_{\sigma ( q+r) }) ,
\end{align}
for any $z_{1},\cdots ,z_{q+r}\in A,$ will be called \emph{the exterior
product of the forms }$\omega $ \emph{and}~$\theta .$
\end{definition}
Using the above definition, we obtain
\begin{theorem}
Let $f\in \mathcal{F}$, $\omega, \bar{\omega}\in\Lambda ^{q}(A)$, $\theta, 
\bar{\theta}\in\Lambda ^{r}(A)$ and $\eta\in\Lambda ^{s}(A)$. Then we have 
\begin{align*}
&\omega \wedge \theta=(-1)^{q\cdot r}\theta \wedge \omega,\ \ \ (f\cdot \omega ) \wedge \theta =f\cdot ( \omega \wedge \theta ) =\omega
\wedge ( f\cdot \theta ),\\
&(\omega+\bar{\omega})\wedge\theta =\omega \wedge\theta+\bar{\omega}\wedge
\theta,\ \ \ \omega \wedge ( \theta+\bar{\theta}) =\omega \wedge \theta
+\omega \wedge\bar{\theta},\\
&(\omega\wedge \theta)\wedge \eta =\omega \wedge(\theta\wedge \eta).
\end{align*}
\end{theorem}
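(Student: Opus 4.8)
The plan is to verify each identity directly from the defining formula \eqref{form1}, relying on the second (symmetrized, $\frac{1}{q!r!}\sum_{\sigma\in\Sigma_{q+r}}$) expression for the exterior product, since it is the more symmetric and computation-friendly form. The bilinearity identities $(\omega+\bar\omega)\wedge\theta=\omega\wedge\theta+\bar\omega\wedge\theta$, $\omega\wedge(\theta+\bar\theta)=\omega\wedge\theta+\omega\wedge\bar\theta$, and $(f\cdot\omega)\wedge\theta=f\cdot(\omega\wedge\theta)=\omega\wedge(f\cdot\theta)$ are essentially immediate: in each summand of \eqref{form1} the factor $\omega(z_{\sigma(1)},\dots,z_{\sigma(q)})$ (respectively $\theta(\cdots)$) is replaced by a sum or scaled by $f$, and since $\mathcal{F}$ is commutative and multiplication distributes over addition, the sum over $\Sigma_{q+r}$ splits or factors accordingly. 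These I would dispatch in one short paragraph.

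For graded commutativity $\omega\wedge\theta=(-1)^{qr}\theta\wedge\omega$, the key step is a change of summation variable. Writing out $\theta\wedge\omega(z_1,\dots,z_{q+r})=\frac{1}{q!r!}\sum_{\tau\in\Sigma_{q+r}}\operatorname{sgn}(\tau)\,\theta(z_{\tau(1)},\dots,z_{\tau(r)})\cdot\omega(z_{\tau(r+1)},\dots,z_{\tau(q+r)})$, I would substitute $\tau=\sigma\circ c$ where $c$ is the "block transposition" permutation that cyclically moves the last $q$ positions to the front (sending $1,\dots,r,r+1,\dots,r+q$ to $q+1,\dots,q+r,1,\dots,q$). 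Then $\theta(z_{\tau(1)},\dots)\cdot\omega(z_{\tau(r+1)},\dots)=\theta(z_{\sigma(q+1)},\dots)\cdot\omega(z_{\sigma(1)},\dots)$, which by commutativity of $\mathcal{F}$ equals $\omega(z_{\sigma(1)},\dots)\cdot\theta(z_{\sigma(q+1)},\dots)$, and $\operatorname{sgn}(\tau)=\operatorname{sgn}(\sigma)\operatorname{sgn}(c)=(-1)^{qr}\operatorname{sgn}(\sigma)$ since $c$ is a product of $qr$ transpositions. As $\sigma$ ranges over $\Sigma_{q+r}$ as $\tau$ does, the sum is reproduced up to the sign $(-1)^{qr}$, giving the result.

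For associativity $(\omega\wedge\theta)\wedge\eta=\omega\wedge(\theta\wedge\eta)$, the cleanest route is to show both sides equal the "triple product"
\begin{equation*}
\frac{1}{q!r!s!}\sum_{\sigma\in\Sigma_{q+r+s}}\operatorname{sgn}(\sigma)\,\omega(z_{\sigma(1)},\dots,z_{\sigma(q)})\cdot\theta(z_{\sigma(q+1)},\dots,z_{\sigma(q+r)})\cdot\eta(z_{\sigma(q+r+1)},\dots,z_{\sigma(q+r+s)}).
\end{equation*}
Expanding $(\omega\wedge\theta)\wedge\eta$ means inserting the definition of $\omega\wedge\theta$ into the outer product; this produces a double sum, one over $\Sigma_{q+r+s}$ (from the outer wedge, grouping the first $q+r$ arguments against the last $s$) and one over $\Sigma_{q+r}$ (from the inner wedge permuting those first $q+r$ slots). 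The standard argument is that the composition of an "outer" shuffle with an "inner" one realizes each permutation of $\Sigma_{q+r+s}$ exactly $(q+r)!$ times with matching sign, and the $\frac{1}{(q+r)!}$ normalization from the two-factor formula cancels this multiplicity, leaving the symmetric triple sum; the mirror computation does the same for $\omega\wedge(\theta\wedge\eta)$. I expect this bookkeeping — tracking the sign cancellations and the counting multiplicity in the composition of shuffles — to be the main obstacle, though it is the familiar argument from the construction of the exterior algebra and goes through verbatim here because $\mathcal{F}$ is commutative and multiplication in $\mathcal{F}$ is associative.
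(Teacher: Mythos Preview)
Your proof plan is correct and complete: the bilinearity identities follow immediately from the definition, the graded-commutativity argument via the block transposition $c$ with $\operatorname{sgn}(c)=(-1)^{qr}$ is the standard one, and the associativity argument reducing both sides to the symmetric triple sum is exactly right. The paper itself does not give a proof of this theorem at all --- it simply writes ``Using the above definition, we obtain'' and states the result --- so your detailed verification goes well beyond what the paper provides, but is entirely sound.
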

\begin{theorem}
If we set 
\begin{equation*}
\Lambda ( A) =\underset{q\geq 0}{\oplus }\Lambda ^{q}( A) ,
\end{equation*}%
then $( \Lambda ( A) ,\wedge ) $ is an associative classical  $\mathcal{F}$-algebra. This
algebra will be called the exterior algebra of the $\mathcal{F}$-algebra $(
A,[ ,] _{A})$.
\end{theorem}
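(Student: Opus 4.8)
The plan is to reduce the whole statement to the properties of $\wedge$ on homogeneous forms recorded in the preceding theorem, together with the fact (the Proposition above) that each $\Lambda^{q}(A)$ is an $\mathcal{F}$-module. First I would observe that $\Lambda(A)=\oplus_{q\geq 0}\Lambda^{q}(A)$ is an $\mathcal{F}$-module, being a direct sum of $\mathcal{F}$-modules; in particular every $\omega\in\Lambda(A)$ has a unique decomposition $\omega=\sum_{q}\omega_{q}$ with $\omega_{q}\in\Lambda^{q}(A)$ and all but finitely many $\omega_{q}$ equal to $0$. Next I would extend the exterior product to all of $\Lambda(A)$ by bilinearity: for $\omega=\sum_{q}\omega_{q}$ and $\theta=\sum_{r}\theta_{r}$ set $\omega\wedge\theta:=\sum_{q,r}\omega_{q}\wedge\theta_{r}$, where each $\omega_{q}\wedge\theta_{r}\in\Lambda^{q+r}(A)$ is given by the wedge formula of the definition above. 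Since both decompositions are finite, this is a well-defined element of $\Lambda(A)$ and it agrees with the given product on homogeneous components.

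Then I would verify the algebra axioms componentwise. Biadditivity of $\wedge$ on $\Lambda(A)$ follows from the identities $(\omega+\bar{\omega})\wedge\theta=\omega\wedge\theta+\bar{\omega}\wedge\theta$ and $\omega\wedge(\theta+\bar{\theta})=\omega\wedge\theta+\omega\wedge\bar{\theta}$ of the preceding theorem applied slot by slot, together with the distributivity of addition in the direct sum; $\mathcal{F}$-homogeneity in each argument follows the same way from $(f\cdot\omega)\wedge\theta=f\cdot(\omega\wedge\theta)=\omega\wedge(f\cdot\theta)$. Hence $(\Lambda(A),\wedge)$ is an $\mathcal{F}$-algebra, and since $\mathcal{F}$ is commutative and $\wedge$ is $\mathcal{F}$-bilinear, it is a \emph{classical} $\mathcal{F}$-algebra in the sense of the Remark following the first definition of this paper. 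Associativity $(\omega\wedge\theta)\wedge\eta=\omega\wedge(\theta\wedge\eta)$ is obtained by expanding $\omega,\theta,\eta$ into homogeneous components, distributing $\wedge$ over these finite sums (legitimate by the bilinearity just established), and invoking the associativity of $\wedge$ for homogeneous forms from the preceding theorem. Finally, a direct computation from the wedge formula in the case $r=0$ shows that for a $0$-form $g\in\Lambda^{0}(A)=\mathcal{F}$ one has $\omega\wedge g=g\wedge\omega=g\cdot\omega$; in particular $1\in\mathcal{F}$ is a two-sided unit, so the algebra is unitary.

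I do not expect a serious obstacle: the entire substance is contained in the preceding theorem, and what remains is the routine but necessary bookkeeping that passing to the graded direct sum and extending $\wedge$ bilinearly preserves the module structure and associativity — essentially the assertion that a graded associative product on the pieces assembles to an associative product on the whole. The only points requiring care are the distribution of $\wedge$ over the finite homogeneous decompositions in the associativity step, and the identification $\Lambda^{0}(A)=\mathcal{F}$ used to exhibit the unit.
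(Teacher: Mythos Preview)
Your proposal is correct and is exactly the standard argument one would give; the paper itself states this theorem without proof, treating it as an immediate consequence of the preceding theorem (which records biadditivity, $\mathcal{F}$-homogeneity, and associativity of $\wedge$ on homogeneous forms). Your write-up simply supplies the routine details the authors omitted, so there is nothing to compare.
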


If $\{ t^{\alpha },~\alpha \in \overline{1,p}\} $ is the cobase associated
to the base $\{ t_{\alpha },~\alpha \in \overline{1,p}\} $ of the $\mathcal{F%
}$\emph{-}algebra $( A,[ ,] _{A}) $, then for any $q\in \overline{%
1,p}$ we define $C_{p}^{q}$ exterior forms of the type $t^{\alpha
_{1}}\wedge \cdots \wedge t^{\alpha _{q}}$, such that $1\leq \alpha
_{1}<\cdots <\alpha _{q}\leq p$. The set 
\begin{equation*}
\begin{array}{c}
\left\{ t^{\alpha _{1}}\wedge \cdots \wedge t^{\alpha _{q}}|1\leq \alpha
_{1}<\cdots <\alpha _{q}\leq p\right\},%
\end{array}%
\end{equation*}%
is a basis for the $\mathcal{F}$-module $ \Lambda ^{q}( A) $.
Therefore, if $\omega \in \Lambda ^{q}( A) $, then%
\begin{equation*}
\begin{array}{c}
\omega =\omega _{\alpha _{1}\cdots \alpha _{q}}t^{\alpha _{1}}\wedge \cdots
\wedge t^{\alpha _{q}}.%
\end{array}%
\end{equation*}
In particular, if $\omega $ is an exterior $p$-form, then it can be written as 
\begin{equation*}
\begin{array}{c}
\omega=f\cdot t^{1}\wedge \cdots \wedge t^{p},%
\end{array}%
\end{equation*}%
where $f\in \mathcal{F}$.

\begin{definition}
For any $z\in A$, the operator 
\begin{equation*}
\begin{array}{c}
\begin{array}{rcl}
\Lambda ( A) & ^{\underrightarrow{~\ \ L_{z}~\ \ }} & \Lambda ( A)%
\end{array}%
,%
\end{array}%
\end{equation*}%
defined by%
\begin{equation*}
\begin{array}{c}
L_{z}( f) =\rho ( z) ( f) ,~\forall f\in \mathcal{F},%
\end{array}%
\end{equation*}%
and 
\begin{align}  \label{form2}
L_{z}\omega ( z_{1},\cdots ,z_{q}) & =\rho ( z) ( \omega ( z_{1},\cdots
,z_{q}) ) -\overset{q}{\underset{i=1}{\sum }}\omega ( ( z_{1},\cdots ,[
z,z_{i}] _{A},\cdots ,z_{q}) ) ,
\end{align}
for any $\omega \in \Lambda ^{q}\mathbf{\ }( A) $ and $z_{1},\cdots
,z_{q}\in A,$ will be called the covariant Lie derivative with respect to
the element $z$.
\end{definition}

\begin{theorem}
If $z\in A,$ $\omega \in \Lambda ^{q}( A) $ and $\theta \in \Lambda ^{r}( A) 
$, then 
\begin{equation*}
\begin{array}{c}
L_{z}( \omega \wedge \theta ) =L_{z}\omega \wedge \theta +\omega \wedge
L_{z}\theta .%
\end{array}%
\end{equation*}
\end{theorem}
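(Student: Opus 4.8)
The plan is to prove the identity by evaluating both sides on an arbitrary tuple. Fix $z\in A$, $\omega\in\Lambda^q(A)$, $\theta\in\Lambda^r(A)$ and $z_1,\cdots,z_{q+r}\in A$. By the definition \eqref{form2} of the covariant Lie derivative, $L_z(\omega\wedge\theta)(z_1,\cdots,z_{q+r})$ is the sum of a \emph{scalar term} $\rho(z)\big((\omega\wedge\theta)(z_1,\cdots,z_{q+r})\big)$ and a \emph{bracket term} $-\sum_{i=1}^{q+r}(\omega\wedge\theta)(z_1,\cdots,[z,z_i]_A,\cdots,z_{q+r})$. Throughout I would expand every occurrence of $\omega\wedge\theta$ by the shuffle form of \eqref{form1}, so that each expression becomes a sum over $(q,r)$-shuffles $\sigma$ of terms $sgn(\sigma)\,\omega(z_{\sigma(1)},\cdots,z_{\sigma(q)})\cdot\theta(z_{\sigma(q+1)},\cdots,z_{\sigma(q+r)})$, possibly with one factor modified.

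For the scalar term, since $\rho(z)\in Der(\mathcal{F})$ is a derivation, the Leibniz rule applied to each product $\omega(z_{\sigma(1)},\cdots,z_{\sigma(q)})\cdot\theta(z_{\sigma(q+1)},\cdots,z_{\sigma(q+r)})$ rewrites $\rho(z)\big((\omega\wedge\theta)(z_1,\cdots,z_{q+r})\big)$ as the sum over $\sigma$ of $sgn(\sigma)$ times $\rho(z)(\omega(\cdots))\cdot\theta(\cdots)+\omega(\cdots)\cdot\rho(z)(\theta(\cdots))$. Comparing with \eqref{form1} and the definition of $L_z$ applied to $\omega$ and to $\theta$, this is precisely the collection of summands coming from the $\rho(z)(\cdot)$-parts of $(L_z\omega)\wedge\theta$ and $\omega\wedge(L_z\theta)$ evaluated on $(z_1,\cdots,z_{q+r})$.

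For the bracket term I would interchange the order of summation. Expanding $(\omega\wedge\theta)(z_1,\cdots,[z,z_i]_A,\cdots,z_{q+r})$ by \eqref{form1}, the modified slot $i$ lies, for each shuffle $\sigma$, in exactly one of the two blocks $\{\sigma(1),\cdots,\sigma(q)\}$ or $\{\sigma(q+1),\cdots,\sigma(q+r)\}$, according to whether $i=\sigma(j)$ with $j\le q$ or $i=\sigma(k)$ with $q<k\le q+r$. Because these two blocks partition $\{1,\cdots,q+r\}$, summing over $i=1,\cdots,q+r$ shows that $\sum_{i=1}^{q+r}(\omega\wedge\theta)(z_1,\cdots,[z,z_i]_A,\cdots,z_{q+r})$ equals the sum over all shuffles $\sigma$ of $sgn(\sigma)$ times $\big(\sum_{j=1}^{q}\omega(z_{\sigma(1)},\cdots,[z,z_{\sigma(j)}]_A,\cdots,z_{\sigma(q)})\cdot\theta(\cdots)+\sum_{k=q+1}^{q+r}\omega(\cdots)\cdot\theta(z_{\sigma(q+1)},\cdots,[z,z_{\sigma(k)}]_A,\cdots,z_{\sigma(q+r)})\big)$; hence the bracket term is the negative of this, which is exactly the bracket-substitution part of $(L_z\omega)\wedge\theta+\omega\wedge(L_z\theta)$ evaluated on $(z_1,\cdots,z_{q+r})$. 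Adding the scalar and bracket parts yields $\big((L_z\omega)\wedge\theta+\omega\wedge L_z\theta\big)(z_1,\cdots,z_{q+r})$, and since the arguments were arbitrary the identity follows.

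I expect the only delicate point to be the reindexing in the previous paragraph: one must check that replacing $z_i$ by $[z,z_i]_A$ in the original tuple and then applying the shuffle formula produces, summand by summand and with the same sign $sgn(\sigma)$, the same thing as first applying the shuffle formula and then replacing $z_{\sigma(m)}$ by $[z,z_{\sigma(m)}]_A$ inside whichever factor contains it. Everything else reduces to the derivation property of $\rho(z)$ and the bilinearity of $\wedge$ established above; one also uses implicitly that $L_z\omega\in\Lambda^q(A)$ and $L_z\theta\in\Lambda^r(A)$, which follows at once from \eqref{form2} and the fact that $\omega$ and $\theta$ are alternating.
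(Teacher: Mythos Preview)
Your proof is correct and follows essentially the same route as the paper's: both evaluate $L_z(\omega\wedge\theta)$ on an arbitrary tuple, expand via the shuffle formula \eqref{form1}, use the derivation property of $\rho(z)$ to split the scalar term, and split the bracket sum according to whether the modified slot lands in the $\omega$-block or the $\theta$-block of each shuffle. The reindexing step you flag as delicate is exactly the one the paper handles (somewhat implicitly) when it rewrites $\sum_{i=1}^{q+r}$ as separate sums $\sum_{i=1}^{q}$ and $\sum_{i=q+1}^{q+r}$ inside each shuffle.
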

\begin{proof}
Let $z_{1},\cdots ,z_{q+r}\in A$. (\ref{form1}) and (\ref{form2}) give us 
\begin{align*}
&L_{z}( \omega \wedge \theta ) ( z_{1},\cdots ,z_{q+r}) =\rho ( z) ( (
\omega \wedge \theta ) ( z_{1},\cdots ,z_{q+r}) ) \\
&-\overset{q+r}{\underset{i=1}{\sum }}( \omega \wedge \theta ) ( (
z_{1},\cdots ,[ z,z_{i}] _{A},\cdots ,z_{q+r}) ) \\
&=\rho ( z) ( \underset{\sigma ( q+1) <\cdots <\sigma ( q+r) }{\underset{%
\sigma ( 1) <\cdots <\sigma ( q) }{\sum }}sgn( \sigma )\omega ( z_{\sigma (
1) },\cdots ,z_{\sigma ( q) }) \cdot \theta ( z_{\sigma ( q+1) },\cdots
,z_{\sigma ( q+r) }) ) \\
&\ \ \ -\overset{q+r}{\underset{i=1}{\sum }}( \omega \wedge \theta ) ( (
z_{1},\cdots ,[ z,z_{i}] _{A},\cdots ,z_{q+r})).
\end{align*}
Therefore we get 
\begin{align*}
&L_{z}( \omega \wedge \theta ) ( z_{1},\cdots ,z_{q+r})=\underset{\sigma (
q+1) <\cdots <\sigma ( q+r) }{\underset{\sigma ( 1) <\cdots <\sigma ( q) }{%
\sum }}sgn( \sigma )\rho ( z) ( \omega ( z_{\sigma ( 1) },\cdots ,z_{\sigma
( q) }) ) \\
&\qquad \cdot \theta ( z_{\sigma ( q+1) },\cdots ,z_{\sigma ( q+r) }) +%
\underset{\sigma ( q+1) <\cdots <\sigma ( q+r) }{\underset{\sigma ( 1)
<\cdots <\sigma ( q) }{\sum }}sgn( \sigma )\omega ( z_{\sigma ( 1) },\cdots
,z_{\sigma ( q) }) \\
&\qquad \cdot \rho ( z) ( \theta ( z_{\sigma ( q+1) },\cdots ,z_{\sigma (
q+r) }) ) -\underset{\sigma ( q+1) <\cdots <\sigma ( q+r) }{\underset{\sigma
( 1) <\cdots <\sigma ( q) }{\sum }}sgn( \sigma ) \\
&\qquad \overset{q}{\underset{i=1}{\sum }}\omega ( z_{\sigma ( 1) },\cdots
,[ z,z_{\sigma ( i) }] _{A},\cdots ,z_{\sigma ( q) }) \cdot \theta (
z_{\sigma ( q+1) },\cdots ,z_{\sigma ( q+r) }) \\
&-\underset{\sigma ( q+1) <\cdots <\sigma ( q+r) }{\underset{\sigma ( 1)
<\cdots <\sigma ( q) }{\sum }}sgn( \sigma ) \overset{q+r}{\underset{i=q+1}{%
\sum }}\omega ( z_{\sigma ( 1) },\cdots ,z_{\sigma ( q) }) \\
&\qquad \cdot \theta ( z_{\sigma ( q+1) },\cdots ,[ z,z_{\sigma ( i) }]
_{A},\cdots ,z_{\sigma ( q+r) }),
\end{align*}
and consequently 
\begin{align*}
&L_{z}( \omega \wedge \theta ) ( z_{1},\cdots ,z_{q+r})=\underset{\sigma ( q+1) <\cdots <\sigma ( q+r) }{\underset{\sigma ( 1)
<\cdots <\sigma ( q) }{\sum }}sgn( \sigma ) L_{z}\omega ( z_{\sigma ( 1)
},\cdots ,z_{\sigma ( q) }) \\
&\qquad \cdot \theta ( z_{\sigma ( q+1) },\cdots ,z_{\sigma ( q+r) }) +%
\underset{\sigma ( q+1) <\cdots <\sigma ( q+r) }{\underset{\sigma ( 1)
<\cdots <\sigma ( q) }{\sum }}sgn( \sigma ) \overset{q+r}{\underset{i=q+1}{%
\sum }}\omega ( z_{\sigma ( 1) },\cdots ,z_{\sigma ( q) }) \\
&\qquad \cdot L_{z}\theta ( z_{\sigma ( q+1) },\cdots ,z_{\sigma ( q+r) }) \\
&=( L_{z}\omega \wedge \theta +\omega \wedge L_{z}\theta ) ( z_{1},\cdots
,z_{q+r}).
\end{align*}
\end{proof}
\begin{definition}
For any $z\in A$, the operator 
\begin{equation*}
\begin{array}{rcl}
\Lambda ( A) & ^{\underrightarrow{\ \ i_{z}\ \ }} & \Lambda ( A) \\ 
\Lambda ^{q}( A) \ni \omega & \longmapsto & i_{z}\omega \in \Lambda ^{q-1}(
A) ,%
\end{array}%
\end{equation*}%
given by 
\begin{equation}
\begin{array}{c}
i_{z}\omega ( z_{2},\cdots ,z_{q}) =\omega ( z,z_{2},\cdots ,z_{q}) ,%
\end{array}%
\end{equation}%
for any $z_{2},\cdots ,z_{q}\in A$, is called the interior product
associated to the element~$z$. Moreover, for any $f\in \mathcal{F}$, we
define $\ i_{z}f=0.$
\end{definition}
\begin{theorem}
If $z\in A$, then for any $\omega \in \Lambda ^{q}(
A) $ and $\theta \in \Lambda ^{r}( A) $ we obtain 
\begin{equation}
\begin{array}{c}
i_{z}( \omega \wedge \theta ) =i_{z}\omega \wedge \theta +( -1) ^{q}\omega
\wedge i_{z}\theta .%
\end{array}%
\end{equation}
\end{theorem}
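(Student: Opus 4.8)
The plan is to evaluate both sides on an arbitrary tuple $(z_2,\dots ,z_{q+r})\in A^{q+r-1}$, using the shuffle form of the exterior product recorded in (\ref{form1}). Set $z_1:=z$. By the definition of $i_z$, the left-hand side evaluated on $(z_2,\dots ,z_{q+r})$ equals $(\omega \wedge \theta )(z_1,z_2,\dots ,z_{q+r})$, which by (\ref{form1}) is $\sum _{\sigma }sgn(\sigma )\,\omega (z_{\sigma (1)},\dots ,z_{\sigma (q)})\cdot \theta (z_{\sigma (q+1)},\dots ,z_{\sigma (q+r)})$, the sum running over all $(q,r)$-shuffles $\sigma $ of $\{1,\dots ,q+r\}$ (those with $\sigma (1)<\dots <\sigma (q)$ and $\sigma (q+1)<\dots <\sigma (q+r)$). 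The first step is to partition this sum according to the position of the slot that receives the index $1$: since $1$ is the smallest element and each block of $\sigma $ is strictly increasing, either $\sigma (1)=1$ or $\sigma (q+1)=1$, and these two families of shuffles are disjoint and together account for all of them. One may assume $q,r\geq 1$; the cases $q=0$ or $r=0$ follow at once from $i_zf=0$ and the $\mathcal{F}$-linearity of $i_z$.

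Second, I would treat the family $\sigma (1)=1$. There $\omega (z_1,z_{\sigma (2)},\dots ,z_{\sigma (q)})=i_z\omega (z_{\sigma (2)},\dots ,z_{\sigma (q)})$, and deleting the first slot and reindexing sets up a sign-preserving bijection between these shuffles and the $(q-1,r)$-shuffles of $\{2,\dots ,q+r\}$: the slot holding the value $1$ is at the front, so it contributes no inversion and $sgn$ is unchanged. Hence this part of the sum is exactly $(i_z\omega \wedge \theta )(z_2,\dots ,z_{q+r})$, again by (\ref{form1}). Then I would treat the family $\sigma (q+1)=1$, where $\theta (z_1,z_{\sigma (q+2)},\dots ,z_{\sigma (q+r)})=i_z\theta (z_{\sigma (q+2)},\dots ,z_{\sigma (q+r)})$; deleting the slot in position $q+1$ and reindexing gives a bijection with the $(q,r-1)$-shuffles of $\{2,\dots ,q+r\}$, but now the deleted slot sits to the right of the $q$ entries of the first block, each of which forms an inversion with it, so $sgn(\sigma )$ picks up a factor $(-1)^q$ relative to the reindexed shuffle. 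Thus this part equals $(-1)^q(\omega \wedge i_z\theta )(z_2,\dots ,z_{q+r})$, and adding the two contributions yields the desired identity.

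The routine parts are the two reindexing bijections; the only point needing care is the sign count in the second family, where one must verify that the parity gained when pulling $z$ out of position $q+1$ is exactly $q$, uniformly in $\sigma $. This is most transparent via inversion counting: the slot carrying the value $1$ forms an inversion with each of the $q$ positions to its left and with none to its right, while the remaining inversions are precisely those of the reindexed $(q,r-1)$-shuffle; equivalently, one can factor $\sigma $ as the $(q{+}1)$-cycle $(1\,2\,\cdots \,q{+}1)$, of sign $(-1)^q$, followed by the shift of that reindexed shuffle. With that bookkeeping in hand the assembly of the two pieces is immediate.
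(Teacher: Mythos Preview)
Your argument is correct and follows essentially the same route as the paper: both set $z_1:=z$, expand $(\omega\wedge\theta)(z_1,\dots,z_{q+r})$ via the shuffle formula, split into the families $\sigma(1)=1$ and $\sigma(q+1)=1$, and handle the sign in the second family by factoring through the $(q{+}1)$-cycle of parity $(-1)^q$. Your inversion-counting explanation of that sign and your explicit mention of the degenerate cases $q=0$ or $r=0$ are minor elaborations, but the structure of the proof is the same.
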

\begin{proof}
Let $z_{1},\cdots ,z_{q+r}\in A$. Then using the above definition we get 
\begin{equation*}
\begin{array}{l}
i_{z_{1}}( \omega \wedge \theta ) ( z_{2},\cdots ,z_{q+r}) =( \omega \wedge
\theta ) ( z_{1},z_{2},\cdots ,z_{q+r}) \\ 
=\underset{\sigma ( q+1) <\cdots <\sigma ( q+r) }{\underset{\sigma ( 1)
<\cdots <\sigma ( q) }{\sum }}sgn( \sigma )\omega ( z_{\sigma ( 1) },\cdots
,z_{\sigma ( q) }) \cdot \theta ( z_{\sigma ( q+1) },\cdots ,z_{\sigma (
q+r) }) \\ 
=\underset{\sigma ( q+1) <\cdots <\sigma ( q+r) }{\underset{1=\sigma ( 1)
<\sigma ( 2) <\cdots <\sigma ( q) }{\sum }}sgn( \sigma )\omega (
z_{1},z_{\sigma ( 2) },\cdots ,z_{\sigma ( q) }) \cdot \theta ( z_{\sigma (
q+1) },\cdots ,z_{\sigma ( q+r) }) \\ 
+\!\!\!\!\!\!\!\!\underset{1=\sigma ( q+1) <\sigma ( q+2) <\cdots <\sigma (
q+r) }{\underset{\sigma ( 1) <\cdots <\sigma ( q) }{\sum }}%
\!\!\!\!\!\!\!sgn( \sigma )\omega ( z_{\sigma ( 1) },\cdots ,z_{\sigma ( q)
}) \cdot \theta ( z_{1},z_{\sigma ( q+2) },\cdots ,z_{\sigma ( q+r) }) \\ 
=\underset{\sigma ( q+1) <\cdots <\sigma ( q+r) }{\underset{\sigma ( 2)
<\cdots <\sigma ( q) }{\sum }}sgn( \sigma )i_{z_{1}}\omega ( z_{\sigma ( 2)
},\cdots ,z_{\sigma ( q) }) \cdot \theta ( z_{\sigma ( q+1) },\cdots
,z_{\sigma ( q+r) }) \\ 
+\underset{\sigma ( q+2) <\cdots <\sigma ( q+r) }{\underset{\sigma ( 1)
<\cdots <\sigma ( q) }{\sum }}sgn( \sigma )\omega ( z_{\sigma ( 1) },\cdots
,z_{\sigma ( q) }) \cdot i_{z_{1}}\theta ( z_{\sigma ( q+2) },\cdots
,z_{\sigma ( q+r) }).
\end{array}%
\end{equation*}
In the second sum, we have the permutation%
\begin{equation*}
\sigma =( 
\begin{array}{ccccccc}
1 & \cdots & q & q+1 & q+2 & \cdots & q+r \\ 
\sigma ( 1) & \cdots & \sigma ( q) & 1 & \sigma ( q+2) & \cdots & \sigma (
q+r)%
\end{array}%
) .
\end{equation*}
We observe that $\sigma =\tau \circ \tau ^{\prime }$, where%
\begin{equation*}
\tau =( 
\begin{array}{ccccccc}
1 & 2 & \cdots & q+1 & q+2 & \cdots & q+r \\ 
1 & \sigma ( 1) & \cdots & \sigma ( q) & \sigma ( q+2) & \cdots & \sigma (
q+r)%
\end{array}%
),
\end{equation*}%
and 
\begin{equation*}
\tau ^{\prime }=( 
\begin{array}{cccccccc}
1 & 2 & \cdots & q & q+1 & q+2 & \cdots & q+r \\ 
2 & 3 & \cdots & q+1 & 1 & q+2 & \cdots & q+r%
\end{array}%
) .
\end{equation*}
Since $\tau ( 2) <\cdots <\tau ( q+1) $ and $\tau ^{\prime }$ has $q$
inversions, it results that 
\begin{equation*}
sgn( \sigma ) =( -1)^{q}sgn( \tau ) .
\end{equation*}
Therefore 
\begin{equation*}
\begin{array}{l}
i_{z_{1}}( \omega \wedge \theta ) ( z_{2},\cdots ,z_{q+r}) =(
i_{z_{1}}\omega \wedge \theta ) ( z_{2},\cdots ,z_{q+r}) \\ 
+( -1) ^{q}\!\!\!\!\!\!\!\!\underset{\tau ( q+2) <\cdots <\tau ( q+r) }{%
\underset{\tau ( 2) <\cdots <\tau ( q) }{\sum }}\!\!\!\!\!\!\!\!sgn( \tau )
\cdot \omega ( z_{\tau ( 2) },\cdots ,z_{\tau ( q) }) \cdot i_{z_{1}}\theta
( z_{\tau ( q+2) },\cdots ,z_{\tau ( q+r) }) \\ 
=( i_{z_{1}}\omega \wedge \theta ) ( z_{2},\cdots ,z_{q+r}) +( -1) ^{q}(
\omega \wedge i_{z_{1}}\theta ) ( z_{2},\cdots ,z_{q+r}),
\end{array}%
\end{equation*}
which completes the proof.
\end{proof}
\begin{theorem}
For any $z,v\in A$, we have 
\begin{equation}
\begin{array}{c}
L_{v}\circ i_{z}-i_{z}\circ L_{v}=i_{[v, z]_{A}}.%
\end{array}%
\end{equation}
\end{theorem}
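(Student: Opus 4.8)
The plan is to prove this operator identity by testing both sides on an arbitrary homogeneous form. Since $\Lambda(A)=\bigoplus_{q\geq 0}\Lambda^{q}(A)$ and the operators $L_{v}$, $i_{z}$, $i_{[v,z]_{A}}$ are all additive, it suffices to verify the equality on each summand $\Lambda^{q}(A)$. The case $q=0$ is immediate: for $f\in\mathcal{F}$ we have $i_{z}f=0$, hence $L_{v}(i_{z}f)=0$; also $L_{v}f=\rho(v)(f)\in\mathcal{F}$, so $i_{z}(L_{v}f)=0$; and $i_{[v,z]_{A}}f=0$ by definition. Thus both sides vanish on $\Lambda^{0}(A)$.

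For $q\geq 1$, I would fix $\omega\in\Lambda^{q}(A)$ and arbitrary $z_{2},\dots,z_{q}\in A$ and expand the two compositions separately. Applying the definition of $i_{z}$ and then formula (\ref{form2}) for $L_{v}$ gives
\[
L_{v}(i_{z}\omega)(z_{2},\dots,z_{q})=\rho(v)\bigl(\omega(z,z_{2},\dots,z_{q})\bigr)-\sum_{i=2}^{q}\omega(z,z_{2},\dots,[v,z_{i}]_{A},\dots,z_{q}),
\]
while, since $L_{v}\omega\in\Lambda^{q}(A)$, one has $i_{z}(L_{v}\omega)(z_{2},\dots,z_{q})=(L_{v}\omega)(z,z_{2},\dots,z_{q})$, and a second application of (\ref{form2}) yields
\[
i_{z}(L_{v}\omega)(z_{2},\dots,z_{q})=\rho(v)\bigl(\omega(z,z_{2},\dots,z_{q})\bigr)-\omega([v,z]_{A},z_{2},\dots,z_{q})-\sum_{i=2}^{q}\omega(z,z_{2},\dots,[v,z_{i}]_{A},\dots,z_{q}).
\]
The point to keep track of is that in $i_{z}\circ L_{v}$ the slot that $z$ occupies produces the extra summand $\omega([v,z]_{A},z_{2},\dots,z_{q})$, while $L_{v}\circ i_{z}$ has no counterpart for it.

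Subtracting, the terms $\rho(v)(\omega(z,z_{2},\dots,z_{q}))$ cancel, as do all the summands $\omega(z,z_{2},\dots,[v,z_{i}]_{A},\dots,z_{q})$ with $i=2,\dots,q$, leaving exactly $\omega([v,z]_{A},z_{2},\dots,z_{q})=i_{[v,z]_{A}}\omega(z_{2},\dots,z_{q})$. This establishes the identity on $\Lambda^{q}(A)$, hence on $\Lambda(A)$. I do not expect a genuine obstacle here; the only care needed is the index bookkeeping that isolates the bracket term in the $z$-slot as the one surviving the cancellation. As a shortcut one could instead observe, using the two preceding theorems, that both $L_{v}\circ i_{z}-i_{z}\circ L_{v}$ and $i_{[v,z]_{A}}$ are degree $-1$ anti-derivations of $(\Lambda(A),\wedge)$, so it would suffice to check agreement on $\mathcal{F}$ and on $1$-forms, where the computation collapses to the single line $\rho(v)(\omega(z))-(L_{v}\omega)(z)=\omega([v,z]_{A})$.
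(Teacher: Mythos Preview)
Your proof is correct and follows essentially the same approach as the paper: expand $i_{z}(L_{v}\omega)(z_{2},\dots,z_{q})$ using the definition of $L_{v}$, and recognize the result as $L_{v}(i_{z}\omega)(z_{2},\dots,z_{q})-i_{[v,z]_{A}}\omega(z_{2},\dots,z_{q})$. Your treatment is slightly more detailed (separate $q=0$ case, both compositions written out before subtracting, plus the anti-derivation remark), but the underlying argument is the same direct computation.
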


\begin{proof}
Let $\omega \in \Lambda ^{q}( A) $. Then 
\begin{equation*}
\begin{array}{l}
i_{z}( L_{v}\omega ) ( z_{2},\cdots z_{q}) =L_{v}\omega ( z,z_{2},\cdots
z_{q}) \\ 
=\rho ( v) ( \omega ( z,z_{2},\cdots ,z_{q}) ) -\omega ( [ v,z]
_{A},z_{2},\cdots ,z_{q}) \\ 
-\overset{q}{\underset{i=2}{\sum }}\omega ( ( z,z_{2},\cdots ,[ v,z_{i}]
_{A},\cdots ,z_{q}) ) \\ 
=\rho ( v) ( i_{z}\omega ( z_{2},\cdots ,z_{q}) ) -\overset{q}{\underset{i=2}%
{\sum }}i_{z}\omega ( z_{2},\cdots ,[ v,z_{i}] _{A},\cdots ,z_{q}) \\ 
-i_{[ v,z] _{A}}( z_{2},\cdots ,z_{q}) =( L_{v}( i_{z}\omega ) -i_{[ v,z]
_{A}}) ( z_{2},\cdots ,z_{q}),%
\end{array}%
\end{equation*}%
for any $z_{2},\cdots ,z_{q}\in A$.
\end{proof}

\begin{definition}
If $f\in \mathcal{F}$ and $z\in A,$ then the exterior differential operator
is defined by 
\begin{equation}
\begin{array}{c}
d^{A}f( z) =\rho ( z) f.%
\end{array}%
\end{equation}
\end{definition}

\begin{theorem}
\label{Th34} The operator 
\begin{equation*}
\begin{array}{c}
\begin{array}{ccc}
\Lambda ^{q}\mathbf{\ }( A) & ^{\underrightarrow{\,\ d^{A}\,\ }} & \Lambda
^{q+1}\mathbf{\ }( A) \\ 
\omega & \longmapsto & d^{A}\omega%
\end{array}%
\end{array}
,
\end{equation*}%
defined by 
\begin{equation}\label{29}
\begin{array}{l}
d^{A}\omega ( z_{0},z_{1},\cdots ,z_{q}) =\overset{q}{\underset{i=0}{\sum }}%
( -1) ^{i}\rho ( z_{i}) ( \omega ( z_{0},z_{1},\cdots ,\hat{z}_{i},\cdots
,z_{q}) ) \\ 
~\ \ \ \ \ \ \ \ \ \ \ \ \ \ \ \ \ \ \ \ \ \ \ \ \ \ +\underset{i<j}{\sum }(
-1) ^{i+j}\omega ( [ z_{i},z_{j}] _{A},z_{0},z_{1},\cdots ,\hat{z}%
_{i},\cdots ,\hat{z}_{j},\cdots ,z_{q}) ,%
\end{array}%
\end{equation}%
for any $z_{0},z_{1},\cdots ,z_{q}\in A,$ is unique with the following
property:%
\begin{equation}  \label{form3}
\begin{array}{c}
L_{z}=d^{A}\circ i_{z}+i_{z}\circ d^{A},\ \ \ \ \forall z\in A.%
\end{array}%
\end{equation}
\end{theorem}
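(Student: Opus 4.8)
The plan is to prove the two assertions of Theorem~\ref{Th34} in turn: first that the operator $d^{A}$ defined by formula~(\ref{29}) satisfies the Cartan-type identity~(\ref{form3}), and then that it is the only operator $\Lambda^{q}(A)\to\Lambda^{q+1}(A)$ with this property. As a preliminary one should check that $d^{A}\omega$ really lies in $\Lambda^{q+1}(A)$: skew-symmetry in $z_{0},\dots,z_{q}$ is a routine consequence of the sign pattern of~(\ref{29}), and $\mathcal{F}$-multilinearity follows because, upon substituting $f z_{j}$ for $z_{j}$, the obstructing terms $\rho(z_{i})(f)\cdot(\cdots)$ coming from the first sum of~(\ref{29}) are exactly cancelled by those coming from the bracket terms via the defining Leibniz rule $[u,fv]_{A}=f[u,v]_{A}+\rho(u)(f)\cdot v$ of a generalized Lie algebra.

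For existence I would argue as follows. On a function $f\in\mathcal{F}$ we have $i_{z}f=0$, so $(d^{A}\circ i_{z}+i_{z}\circ d^{A})f=i_{z}(d^{A}f)=(d^{A}f)(z)=\rho(z)f=L_{z}f$. For $\omega\in\Lambda^{q}(A)$ with $q\geq 1$, expand $i_{z}(d^{A}\omega)(z_{1},\dots,z_{q})=d^{A}\omega(z,z_{1},\dots,z_{q})$ from~(\ref{29}), expand $d^{A}(i_{z}\omega)(z_{1},\dots,z_{q})$ from~(\ref{29}) applied to the $(q-1)$-form $i_{z}\omega$ (using $(i_{z}\omega)(\cdots)=\omega(z,\cdots)$), and add. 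Three cancellations then take place: each term $(-1)^{m}\rho(z_{m})(\omega(z,z_{1},\dots,\hat{z}_{m},\dots,z_{q}))$ from the first expansion is annihilated by the matching term $(-1)^{m-1}\rho(z_{m})(\omega(z,z_{1},\dots,\hat{z}_{m},\dots,z_{q}))$ from the second; each term $(-1)^{i+j}\omega([z_{i},z_{j}]_{A},z,\dots)$ of the first expansion cancels the term $(-1)^{i+j}\omega(z,[z_{i},z_{j}]_{A},\dots)$ of the second after one transposition of arguments; and the leftover is $\rho(z)(\omega(z_{1},\dots,z_{q}))+\sum_{j=1}^{q}(-1)^{j}\omega([z,z_{j}]_{A},z_{1},\dots,\hat{z}_{j},\dots,z_{q})$. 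Moving each bracket $[z,z_{j}]_{A}$ back to its $j$-th slot produces the factor $(-1)^{j-1}$, so the leftover equals $\rho(z)(\omega(z_{1},\dots,z_{q}))-\sum_{j=1}^{q}\omega(z_{1},\dots,[z,z_{j}]_{A},\dots,z_{q})$, which is exactly $L_{z}\omega(z_{1},\dots,z_{q})$ by~(\ref{form2}). Hence~(\ref{form3}) holds.

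For uniqueness, suppose $D$ is an operator $\Lambda^{q}(A)\to\Lambda^{q+1}(A)$ (for each $q\geq 0$) satisfying $L_{z}=D\circ i_{z}+i_{z}\circ D$ for all $z\in A$; I would show $D=d^{A}$ by induction on the degree. On $\Lambda^{0}(A)=\mathcal{F}$: for any $f$ and any $z$, $i_{z}(Df)=L_{z}f-D(i_{z}f)=\rho(z)f-0=i_{z}(d^{A}f)$, and since a $1$-form is determined by its values on all $z\in A$, we get $Df=d^{A}f$. Assuming $D=d^{A}$ on all forms of degree $<q$, take $\omega\in\Lambda^{q}(A)$; then for every $z\in A$, using the inductive hypothesis for the $(q-1)$-form $i_{z}\omega$ and the identity~(\ref{form3}) for $d^{A}$ just proved, $i_{z}(D\omega)=L_{z}\omega-D(i_{z}\omega)=L_{z}\omega-d^{A}(i_{z}\omega)=i_{z}(d^{A}\omega)$. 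Thus $i_{z}(D\omega-d^{A}\omega)=0$ for every $z\in A$, and since any $\eta\in\Lambda^{q+1}(A)$ satisfies $\eta(z_{1},\dots,z_{q+1})=(i_{z_{1}}\eta)(z_{2},\dots,z_{q+1})$, it follows that $D\omega=d^{A}\omega$.

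The only delicate point is the sign bookkeeping in the existence computation: one must keep careful track of the positions of the distinguished entry $z$ and of the brackets $[z,z_{j}]_{A}$ and $[z_{i},z_{j}]_{A}$ as they are moved between argument slots. Once~(\ref{form3}) is known for $d^{A}$, the uniqueness argument is purely formal, resting only on the facts that $i_{z}$ inserts $z$ in the first slot and that $i_{z}\eta=0$ for all $z$ forces $\eta=0$ whenever $\deg\eta\geq 1$.
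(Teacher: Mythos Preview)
Your proposal is correct and follows essentially the same approach as the paper: a direct expansion of $i_{z_{0}}\circ d^{A}\omega$ (equivalently $i_{z}\circ d^{A}+d^{A}\circ i_{z}$) from formula~(\ref{29}) to verify~(\ref{form3}), followed by an induction on form-degree for uniqueness, using that $i_{z}\eta=0$ for all $z$ forces $\eta=0$. Your preliminary verification that $d^{A}\omega\in\Lambda^{q+1}(A)$ (skew-symmetry and $\mathcal{F}$-multilinearity via the Leibniz rule) is a welcome addition that the paper leaves implicit.
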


\begin{proof}
At first we prove that the equation (\ref{form3}) holds. We get 
\begin{align*}
&( i_{z_{0}}\circ d^{A}) \omega ( z_{1},\cdots ,z_{q})=d^{A}\omega (
z_{0},z_{1},\cdots ,z_{q}) \\ 
&=\overset{q}{\underset{i=0}{\sum }}( -1) ^{i}\rho ( z_{i}) ( \omega (
z_{0},z_{1},\cdots ,\hat{z}_{i},\cdots ,z_{q}) ) \\ 
&+\underset{0\leq i<j}{\sum }( -1) ^{i+j}\omega ( [ z_{i},z_{j}]
_{A},z_{0},z_{1},\cdots ,\hat{z}_{i},\cdots ,\hat{z}_{j},\cdots ,z_{q}) \\ 
&=\rho ( z_{0}) ( \omega ( z_{1},\cdots ,z_{q}) ) \\ 
&+\overset{q}{\underset{i=1}{\sum }}( -1) ^{i}\rho ( z_{i}) ( \omega (
z_{0},z_{1},\cdots ,\hat{z}_{i},\cdots ,z_{q}) ) \\ 
&+\underset{i=1}{\overset{q}{\sum }}( -1) ^{i}\omega ( [ z_{0},z_{i}]
_{A},z_{1},\cdots ,\hat{z}_{i},\cdots ,z_{q}) \\ 
&+\underset{1\leq i<j}{\sum }( -1) ^{i+j}\omega ( [ z_{i},z_{j}]
_{A},z_{0},z_{1},\cdots ,\hat{z}_{i},\cdots ,\hat{z}_{j},\cdots ,z_{q}),
\end{align*}
which gives us
\begin{align*} 
&( i_{z_{0}}\circ d^{A}) \omega ( z_{1},\cdots ,z_{q})=\rho ( z_{0}) ( \omega ( z_{1},\cdots ,z_{q}) )\\ 
&-\underset{i=1}{\overset{q}{\sum }}\omega ( z_{1},\cdots ,[ z_{0},z_{i}]
_{A},\cdots ,z_{q}) \\ 
&-\overset{q}{\underset{i=1}{\sum }}( -1) ^{i-1}\rho ( z_{i}) (
i_{z_{0}}\omega ( ( z_{1},\cdots ,\hat{z}_{i},\cdots ,z_{q}) ) ) \\ 
&-\underset{1\leq i<j}{\sum }( -1) ^{i+j-2}i_{z_{0}}\omega ( ( [ z_{i},z_{j}]
_{A},z_{1},\cdots ,\hat{z}_{i},\cdots ,\hat{z}_{j},\cdots ,z_{q}) ) \\ 
&=( L_{z_{0}}-d^{A}\circ i_{z_{0}}) \omega ( z_{1},\cdots ,z_{q}) ,%
\end{align*}%
for any $z_{0},z_{1},\cdots ,z_{q}\in A$. Thus (\ref{form3}) holds. Now, we
verify the uniqueness of the operator $d^{A}$. Let $d^{\prime A}$ be an
another exterior differentiation operator satisfying the property (\ref%
{form3}). We consider the set 
\begin{equation*}
S=\left\{ q\in \mathbb{N}|d^{A}\omega =d^{\prime A}\omega ,~\forall \omega
\in \Lambda ^{q}( ( F,\nu ,N) ) A\right\},
\end{equation*}
and we let $z\in ( A,[ ,] _{A}~,\rho ) $. We observe that (\ref%
{form3}) is equivalent with 
\begin{equation}  \label{0}
\begin{array}{c}
i_{z}\circ ( d^{A}-d^{\prime A}) +( d^{A}-d^{\prime A}) \circ i_{z}=0.%
\end{array}%
\end{equation}
Since $i_{z}f=0,$ for any $f\in \mathcal{F},$ it results that 
\begin{equation*}
\begin{array}{c}
( d^{A}-d^{\prime A}) ( f) ( z) =0,~\forall f\in \mathcal{F}.%
\end{array}%
\end{equation*}
Therefore, we obtain $0\in S$. We now prove that if $q\in S$ then $q+1\in S$. 
Let $\omega \in \Lambda ^{p+1}( A) $. Since $i_{z}\omega \in \Lambda ^{q}(
A) $, using the equality (\ref{0}), it results that 
\begin{equation*}
i_{z}\circ ( d^{A}-d^{\prime A}) \omega =0,
\end{equation*}
which implies $( ( d^{A}-d^{\prime A}) \omega ) ( z_{0},z_{1},\cdots
,z_{q}) =0,$ for any $z_{1},\cdots ,z_{q}\in A.$ Therefore $d^{A}\omega
=d^{\prime A}\omega .$ Namely $q+1\in S$. Thus the Peano's Axiom implies that $S=\mathbb{N}$.
Therefore, the uniqueness is verified.
\end{proof}
The operator given by Theorem \ref{Th34} will be called\emph{\ the exterior
differentiation ope\-ra\-tor for the exterior differential algebra of the
generalized Lie }$\mathcal{F}$-\emph{algebra }$( A,[ ,] _{A}~,\rho
)$. Using (\ref{29}) we deduce that if $\omega =\omega _{\alpha _{1}\cdots \alpha _{q}}t^{\alpha _{1}}\wedge
\cdots \wedge t^{\alpha _{q}}\in \Lambda ^{q}( A) $, then 
\begin{align*}
d^{A}\omega ( t_{\alpha _{0}},t_{\alpha _{1}},\cdots ,t_{\alpha _{q}})&=
\overset{q}{\underset{i=0}{\sum }}( -1) ^{i}\rho _{\alpha _{i}}^{k}\partial
_{k}( \omega _{\alpha _{0},\cdots ,\widehat{\alpha _{i}}\cdots \alpha _{q}})
\\
&\ \ \ +\underset{i<j}{\sum }( -1) ^{i+j}L_{\alpha _{i}\alpha _{j}}^{\alpha
}\cdot \omega _{\alpha ,\alpha _{0},\cdots ,\widehat{\alpha _{i}},\cdots ,%
\widehat{\alpha _{j}},\cdots ,\alpha _{q}}.
\end{align*}
Therefore, $d^A\omega$ has the following locally expression:
\begin{align*}
&d^{A}\omega=\Big(\overset{q}{\underset{i=0}{\sum }}( -1) ^{i}\rho _{\alpha
_{i}}^{k}\partial _{k}( \omega _{\alpha _{0},\cdots ,\widehat{\alpha _{i}}%
\cdots \alpha _{q}})  \notag \\
&\ \ \ \ \ \ \ \ \ +\underset{i<j}{\sum }( -1) ^{i+j}L_{\alpha _{i}\alpha
_{j}}^{\alpha }\cdot \omega _{\alpha ,\alpha _{0},\cdots ,\widehat{\alpha
_{i}},\cdots ,\widehat{\alpha _{j}},\cdots ,\alpha _{q}}\Big) t^{\alpha
_{0}}\wedge t^{\alpha _{1}}\wedge \cdots \wedge t^{\alpha _{q}}.
\end{align*}
\begin{theorem}
\label{38} The exterior differentiation operator $d^{A}$  has the following properties: 
\begin{equation*}
\begin{array}{c}
(i)\ \ d^{A}( \omega \wedge \theta ) =d^{A}\omega \wedge \theta +( -1)
^{q}\omega \wedge d^{A}\theta,\ \ \ \forall\omega \in\Lambda ^{q}( A), \
\forall \theta \in \Lambda ^{r}( A),%
\end{array}%
\end{equation*}
\begin{equation*}
\begin{array}{c}
\hspace{-5.6cm}(ii)\ \ L_{z}\circ d^{A}=d^{A}\circ L_{z}, \ \ \forall z\in A,%
\end{array}%
\end{equation*}
\begin{equation*}
\hspace{-8.2cm}(iii)\ \ d^{A}\circ d^{A}=0.
\end{equation*}
\end{theorem}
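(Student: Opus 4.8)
The plan is to prove the three identities in the order (i), (iii), (ii), working entirely with the Cartan-type relation $L_z=d^{A}\circ i_z+i_z\circ d^{A}$ of Theorem~\ref{Th34}, the Leibniz rules for $L_z$ and $i_z$ already established, and the elementary observation used throughout: a form $\alpha\in\Lambda^{k}(A)$ with $k\geq 1$ is zero as soon as $i_z\alpha=0$ for all $z\in A$, since $\alpha(z_0,\dots,z_{k-1})=(i_{z_0}\alpha)(z_1,\dots,z_{k-1})$.

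For (i) I would induct on $q+r$. If $q=0$ (or, symmetrically, $r=0$), then $\omega=f\in\mathcal F$ and the identity $d^{A}(f\cdot\theta)=d^{A}f\wedge\theta+f\cdot d^{A}\theta$ follows directly from (\ref{29}): applying the Leibniz rule of each $\rho(z_i)\in Der(\mathcal F)$ to $f\cdot\theta(\dots)$ splits the first sum into $f$ times the defining sum for $d^{A}\theta$ plus $\sum_i(-1)^i\rho(z_i)(f)\,\theta(z_0,\dots,\widehat{z_i},\dots,z_q)$, and the latter is exactly $(d^{A}f\wedge\theta)(z_0,\dots,z_q)$ by (\ref{form1}). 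For $q,r\geq 1$, both sides of (i) lie in $\Lambda^{q+r+1}(A)$, so it suffices to show $i_z$ of the two sides agree for every $z$. On the left, (\ref{form3}) turns $i_z\circ d^{A}$ into $L_z-d^{A}\circ i_z$; expanding with the Leibniz rules $L_z(\omega\wedge\theta)=L_z\omega\wedge\theta+\omega\wedge L_z\theta$ and $i_z(\omega\wedge\theta)=i_z\omega\wedge\theta+(-1)^q\omega\wedge i_z\theta$ produces two terms $d^{A}(i_z\omega\wedge\theta)$ and $d^{A}(\omega\wedge i_z\theta)$ of strictly smaller total degree, to which the inductive hypothesis applies; after replacing $L_z\omega-d^{A}i_z\omega$ and $L_z\theta-d^{A}i_z\theta$ by $i_zd^{A}\omega$ and $i_zd^{A}\theta$ (again by (\ref{form3})) and collecting signs, one recovers precisely $i_z$ of $d^{A}\omega\wedge\theta+(-1)^q\omega\wedge d^{A}\theta$. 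The only delicate part is the sign bookkeeping, in particular the interplay between the $(-1)^q$ in the Leibniz rule for $i_z$ and the degree shift.

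For (iii) I would first observe, using (i), that $d^{A}\circ d^{A}$ satisfies $d^{A}d^{A}(\omega\wedge\theta)=d^{A}d^{A}\omega\wedge\theta+\omega\wedge d^{A}d^{A}\theta$ — the cross terms $(-1)^{q+1}d^{A}\omega\wedge d^{A}\theta$ and $(-1)^{q}d^{A}\omega\wedge d^{A}\theta$ cancel — so $d^{A}\circ d^{A}$ is a degree-two derivation of $(\Lambda(A),\wedge)$; since this algebra is generated over $\mathcal F=\Lambda^{0}(A)$ by $\Lambda^{1}(A)$ (via the basis $\{t^{\alpha}\}$), and $d^{A}d^{A}f=0$ makes $d^{A}\circ d^{A}$ $\mathcal F$-linear, it is enough to check $d^{A}d^{A}f=0$ and $d^{A}d^{A}t^{\alpha}=0$. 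Expanding (\ref{29}) twice gives $d^{A}d^{A}f(z_0,z_1)=\rho(z_0)(\rho(z_1)f)-\rho(z_1)(\rho(z_0)f)-\rho([z_0,z_1]_A)f$, which vanishes by Proposition~\ref{Prop18}. Using $d^{A}t^{\alpha}=-\tfrac12 L^{\alpha}_{\beta\gamma}t^{\beta}\wedge t^{\gamma}$ together with (i) and the local expression for $d^{A}$, the identity $d^{A}d^{A}t^{\alpha}=0$ reduces to the structure-element identity carried by the Jacobi identity $LA_{2}$ for $[,]_{A}$, used in conjunction with the compatibility relation $L^{\gamma}_{\alpha\beta}\rho^{k}_{\gamma}=\rho^{i}_{\alpha}\partial_i(\rho^{k}_{\beta})-\rho^{j}_{\beta}\partial_j(\rho^{k}_{\alpha})+\rho^{i}_{\alpha}\rho^{j}_{\beta}C^{k}_{ij}$ recorded earlier. (Equivalently, (iii) can be obtained by induction on $\deg\omega$ from $i_z(d^{A}d^{A}\omega)=L_zd^{A}\omega-d^{A}L_z\omega+d^{A}d^{A}i_z\omega$ once (ii) is proved independently, e.g. via the commutator identity $[L_v,L_z]=L_{[v,z]_A}$.)

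Finally (ii) follows formally from (iii) and (\ref{form3}): $L_z\circ d^{A}=(d^{A}\circ i_z+i_z\circ d^{A})\circ d^{A}=d^{A}\circ i_z\circ d^{A}+i_z\circ(d^{A}\circ d^{A})=d^{A}\circ i_z\circ d^{A}$, and $d^{A}\circ L_z=d^{A}\circ(d^{A}\circ i_z+i_z\circ d^{A})=(d^{A}\circ d^{A})\circ i_z+d^{A}\circ i_z\circ d^{A}=d^{A}\circ i_z\circ d^{A}$, hence $L_z\circ d^{A}=d^{A}\circ L_z$. I expect the real work to be concentrated in the inductive step of (i), with its sign accounting, and in the reduction of $d^{A}d^{A}t^{\alpha}=0$ to the Jacobi identity in (iii); everything else is a formal consequence of the Cartan relations.
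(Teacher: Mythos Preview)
Your argument for (i) is essentially the paper's: both induct via the Cartan relation $L_{z}=d^{A}\circ i_{z}+i_{z}\circ d^{A}$ together with the Leibniz rules for $L_{z}$ and $i_{z}$, reducing $i_{z_{0}}d^{A}(\omega\wedge\theta)$ to lower-degree instances (the paper inducts on $q$ rather than $q+r$, but the mechanics are identical).

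Where you genuinely diverge is in the handling of (ii) and (iii): you reverse the paper's logical dependency. The paper first proves (ii) by a separate induction on degree (using $L_{v}\circ i_{z}-i_{z}\circ L_{v}=i_{[v,z]_{A}}$ to push $L_{z}$ past $i_{z_{0}}$), and then obtains (iii) by the slick observation that (ii) forces $i_{z}\circ(d^{A}\circ d^{A})=(d^{A}\circ d^{A})\circ i_{z}$, whence $(d^{A}\circ d^{A})\omega=0$ by descent on degree. You instead establish (iii) directly---using (i) to see that $d^{A}\circ d^{A}$ is a derivation and then checking it on $f\in\mathcal{F}$ (Proposition~\ref{Prop18}) and on the $t^{\alpha}$ (Jacobi)---and afterwards read off (ii) in one line from Cartan's formula and $d^{A}\circ d^{A}=0$. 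Your derivation of (ii) is cleaner than the paper's inductive proof; on the other hand, the paper's route to (iii) is basis-free, whereas your generator argument presupposes that $A$ is a free $\mathcal{F}$-module with basis $\{t_{\alpha}\}$, an assumption the paper makes only when working locally.
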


\begin{proof}
(i)\ Let 
\begin{equation*}
S=\left\{ q\in \mathbb{N}|d^{A}( \omega \wedge \theta ) =d^{A}\omega \wedge
\theta +( -1) ^{q}\omega \wedge d^{A}\theta ,~\forall \omega \in \Lambda
^{q}( A) \right\}.
\end{equation*}
Since 
\begin{equation*}
\begin{array}{l}
d^{A}( f\wedge \omega) ( z,v) =d^{A}( f\cdot \omega) ( z,v) \\ 
=\rho ( z) ( f\omega ( v) ) -\rho ( v) ( f\omega ( z) ) -f\omega ( [ z,v]
_{A}) \\ 
=\rho ( z) ( f) \cdot \omega ( v) +f\cdot \rho ( z) ( \omega ( v) ) \\ 
-\rho ( v) ( f) \cdot \omega ( z) -f\cdot \rho ( v) ( \omega ( z) ) -f\omega
( [ z,v] _{A}) \\ 
=d^{A}f( z) \cdot \omega ( v) -d^{A}f( v) \cdot \omega ( z) +f\cdot
d^{A}\omega ( z,v) \\ 
=( d^{A}f\wedge \omega ) ( z,v) +( -1) ^{0}f\cdot d^{A}\omega ( z,v) \\ 
=( d^{A}f\wedge \omega ) ( z,v) +( -1) ^{0}( f\wedge d^{A}\omega ) ( z,v)
,~\forall z,v\in A,%
\end{array}%
\end{equation*}%
then we deduce $0\in S$. We now prove that if $q\in S$, then $q+1\in S$. Without loss of generality, we consider that $\theta \in \Lambda
^{r}( A)$. Then we get
\begin{equation*}
\begin{array}{l}
d^{A}( \omega \wedge \theta ) ( z_{0},z_{1},\cdots ,z_{q+r}) =i_{z_{0}}\circ
d^{A}( \omega \wedge \theta ) ( z_{1},\cdots ,z_{q+r}) \\ 
=L_{z_{0}}( \omega \wedge \theta ) ( z_{1},\cdots ,z_{q+r}) -d^{A}\circ
i_{z_{0}}( \omega \wedge \theta ) ( z_{1},\cdots ,z_{q+r}) \\ 
=( L_{z_{0}}\omega \wedge \theta +\omega \wedge L_{z_{0}}\theta ) (
z_{1},\cdots ,z_{q+r}) \\ 
-[ d^{A}\circ ( i_{z_{0}}\omega \wedge \theta +( -1) ^{q}\omega \wedge
i_{z_{0}}\theta ) ] ( z_{1},\cdots ,z_{q+r}) \\ 
=( L_{z_{0}}\omega \wedge \theta +\omega \wedge L_{z_{0}}\theta -(
d^{A}\circ i_{z_{0}}\omega ) \wedge \theta ) ( z_{1},\cdots ,z_{q+r}) \\ 
-( ( -1) ^{q-1}i_{z_{0}}\omega \wedge d^{A}\theta +( -1) ^{q}d^{A}\omega
\wedge i_{z_{0}}\theta ) ( z_{1},\cdots ,z_{q+r}) \\ 
-( -1) ^{2q}\omega \wedge d^{A}\circ i_{z_{0}}\theta ( z_{1},\cdots ,z_{q+r})
\\ 
=( ( L_{z_{0}}\omega -d^{A}\circ i_{z_{0}}\omega ) \wedge \theta ) (
z_{1},\cdots ,z_{q+r}) \\ 
+\omega \wedge ( L_{z_{0}}\theta -d^{A}\circ i_{z_{0}}\theta ) (
z_{1},\cdots ,z_{q+r}) \\ 
+( ( -1) ^{q}i_{z_{0}}\omega \wedge d^{A}\theta -( -1) ^{q}d^{A}\omega
\wedge i_{z_{0}}\theta ) ( z_{1},\cdots ,z_{q+r}) \\ 
=[ ( ( i_{z_{0}}\circ d^{A}) \omega ) \wedge \theta +( -1) ^{q+1}d^{A}\omega
\wedge i_{z_{0}}\theta ] ( z_{1},\cdots ,z_{q+r}) \\ 
+[ \omega \wedge ( ( i_{z_{0}}\circ d^{A}) \theta ) +( -1)
^{q}i_{z_{0}}\omega \wedge d^{A}\theta ] ( z_{1},\cdots ,z_{q+r}) \\ 
=[ i_{z_{0}}( d^{A}\omega \wedge \theta ) +( -1) ^{q}i_{z_{0}}( \omega
\wedge d^{A}\theta ) ] ( z_{1},\cdots ,z_{q+r}) \\ 
=[ d^{A}\omega \wedge \theta +( -1) ^{q}\omega \wedge d^{A}\theta ] (
z_{1},\cdots ,z_{q+r}) ,%
\end{array}%
\end{equation*}%
for any $z_{0},z_{1},\cdots ,z_{q+r}\in A$, which gives us $q+1\in S$. Thus using
the Peano's Axiom we deduce $S=\mathbb{N}$.\\
(ii)\ Let $z\in A$ and we consider 
\begin{equation*}
S=\left\{ q\in \mathbb{N}|( L_{z}\circ d^{A}) \omega =( d^{A}\circ L_{z})
\omega ,~\forall \omega \in \Lambda ^{q}( A) \right\}.
\end{equation*}
If $f\in \mathcal{F}$, then we get 
\begin{equation*}
\begin{array}{l}
( d^{A}\circ L_{z}) f( v) =i_{v}\circ ( d^{A}\circ L_{z}) f=( i_{v}\circ
d^{A}) \circ L_{z}f \\ 
=( L_{v}\circ L_{z}) f-( ( d^{A}\circ i_{v}) \circ L_{z}) f \\ 
=( L_{v}\circ L_{z}) f-L_{[ z,v] _{A}}f+d^{A}\circ i_{[ z,v]
_{A}}f-d^{A}\circ L_{z}( i_{v}f) \\ 
=( L_{v}\circ L_{z}) f-L_{[ z,v] _{A}}f+d^{A}\circ i_{[ z,v] _{A}}f-0 \\ 
=( L_{v}\circ L_{z}) f-L_{[ z,v] _{A}}f+d^{A}\circ i_{[ z,v]
_{A}}f-L_{z}\circ d^{A}( i_{v}f) \\ 
=( L_{z}\circ i_{v}) ( d^{A}f) -L_{[ z,v] _{A}}f+d^{A}\circ i_{[ z,v] _{A}}f
\\ 
=( i_{v}\circ L_{z}) ( d^{A}f) +L_{[ z,v] _{A}}f-L_{[ z,v] _{A}}f \\ 
=i_{v}\circ ( L_{z}\circ d^{A}) f=( L_{z}\circ d^{A}) f( v) ,~\forall v\in A,%
\end{array}%
\end{equation*}%
which results that $0\in S$. We now show that if $q\in S$, then $q+1\in S$. Let $\omega \in \Lambda ^{q}( A)$. Then
\begin{align*}
&( d^{A}\circ L_{z}) \omega ( z_{0},z_{1},\cdots ,z_{q})=i_{z_{0}}\circ (
d^{A}\circ L_{z}) \omega ( z_{1},\cdots ,z_{q})\\ 
&=( i_{z_{0}}\circ d^{A}) \circ L_{z}\omega ( z_{1},\cdots ,z_{q})\\ 
&=[ ( L_{z_{0}}\circ L_{z}) \omega -( ( d^{A}\circ i_{z_{0}}) \circ L_{z})
\omega ] ( z_{1},\cdots ,z_{q})\\ 
&=[ ( L_{z_{0}}\circ L_{z}) \omega -L_{[ z,z_{0}] _{A}}\omega ] (
z_{1},\cdots ,z_{q})\\ 
&+[ d^{A}\circ i_{[ z,z_{0}] _{A}}\omega -d^{A}\circ L_{z}( i_{z_{0}}\omega )
] ( z_{1},\cdots ,z_{q}).
\end{align*}
Using (ii) in the above equation we obtain
\begin{align*}
&( d^{A}\circ L_{z}) \omega ( z_{0},z_{1},\cdots ,z_{q})=[ ( L_{z_{0}}\circ L_{z}) \omega -L_{[ z,z_{0}] _{A}}\omega
] ( z_{1},\cdots ,z_{q})\\ 
&+[ d^{A}\circ i_{[ z,z_{0}] _{A}}\omega -L_{z}\circ d^{A}( i_{z_{0}}\omega )
] ( z_{1},\cdots ,z_{q})\\ 
&=[ ( L_{z}\circ i_{z_{0}}) ( d^{A}\omega ) -L_{[ z,z_{0}] _{A}}\omega
+d^{F}\circ i_{[ z,z_{0}] _{A}}\omega ] ( z_{1},\cdots ,z_{q})\\ 
&=[ ( i_{z_{0}}\circ L_{z}) ( d^{A}\omega ) +L_{[ z,z_{0}] _{A}}\omega -L_{[
z,z_{0}] _{A}}\omega ] ( z_{1},\cdots ,z_{q}) \\ 
&=i_{z_{0}}\circ ( L_{z}\circ d^{A}) \omega ( z_{1},\cdots ,z_{q})\\ 
&=( L_{z}\circ d^{A}) \omega ( z_{0},z_{1},\cdots ,z_{q}) ,~\forall
z_{0},z_{1},\cdots ,z_{q}\in A,%
\end{align*}%
which implies $q+1\in S$. Using the Peano's Axiom we result that $S=\mathbb{N}$.%
\newline
(iii)\ It is remarked that 
\begin{equation*}
\begin{array}{l}
i_{z}\circ ( d^{A}\circ d^{A}) =( i_{z}\circ d^{A}) \circ d^{A}=L_{z}\circ
d^{A}-( d^{A}\circ i_{z}) \circ d^{A}\vspace*{1mm} \\ 
=L_{z}\circ d^{A}-d^{A}\circ L_{z}+d^{A}\circ ( d^{A}\circ i_{z}) =(
d^{A}\circ d^{A}) \circ i_{z},%
\end{array}%
\end{equation*}%
for any $z\in A$. Now, let $\omega \in \Lambda ^{q}( A)$. Then we get 
\begin{equation*}
\begin{array}{l}
( d^{A}\circ d^{A}) \omega ( z_{1},\cdots ,z_{q+2}) =i_{z_{q+2}}\circ \cdots
\circ i_{z_{1}}\circ ( d^{A}\circ d^{A}) \omega\\ 
=i_{z_{q+2}}\circ ( d^{A}\circ d^{A}) \circ i_{z_{q+1}}( \omega (
z_{1},\cdots ,z_{q}) ) \vspace*{1mm} \\ 
=i_{z_{q+2}}\circ ( d^{A}\circ d^{A}) ( 0) =0,~\forall z_{1},\cdots
,z_{q+2}\in A.%
\end{array}%
\end{equation*}
\end{proof}

\begin{theorem}
If $d^{A}$ is the exterior differentiation operator for the exterior
differential $\mathcal{F}$-algebra $(\Lambda (A),\wedge )$, then we
obtain the structure equations of Maurer-Cartan type 
\begin{equation}  \label{36}
\begin{array}{c}
d^{A}t^{\alpha }=-\displaystyle\frac{1}{2}L_{\beta \gamma }^{\alpha
}t^{\beta }\wedge t^{\gamma },%
\end{array}%
\end{equation}%
where $\{ t^{\alpha }\} ~$ is the coframe of the Lie algebra $( A,[
,] _{A})$.
\end{theorem}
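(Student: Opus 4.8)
The plan is to verify the structure equation (\ref{36}) by evaluating both sides on an arbitrary pair of elements of the base $\{t_\alpha\}$ and then invoking the fact, recorded in the excerpt, that $\{t^{\alpha_1}\wedge t^{\alpha_2}\mid \alpha_1<\alpha_2\}$ is a basis of $\Lambda^2(A)$, so that a $2$-form is completely determined by its values on the pairs $(t_\mu,t_\nu)$. Since $d^A t^\alpha\in\Lambda^2(A)$ by Theorem \ref{Th34} and $-\tfrac12 L^\alpha_{\beta\gamma}t^\beta\wedge t^\gamma$ is manifestly in $\Lambda^2(A)$, it is enough to check that the two sides agree after evaluation on $(t_\mu,t_\nu)$ for all $\mu,\nu$.

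First I would apply the defining formula (\ref{29}) with $q=1$ and $\omega=t^\alpha$, which gives $d^A t^\alpha(z_0,z_1)=\rho(z_0)(t^\alpha(z_1))-\rho(z_1)(t^\alpha(z_0))-t^\alpha([z_0,z_1]_A)$ for all $z_0,z_1\in A$. Specializing to $z_0=t_\mu$, $z_1=t_\nu$, the entries $t^\alpha(t_\nu)$ and $t^\alpha(t_\mu)$ equal the zero or the unit element of $\mathcal{F}$; since every element of $Der(\mathcal{F})$ annihilates $0$ and also annihilates the unit (because $X(1)=X(1\cdot 1)=2X(1)$ forces $X(1)=0$), the first two terms drop out. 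Writing $[t_\mu,t_\nu]_A=L^\delta_{\mu\nu}t_\delta$ and using the $\mathcal{F}$-linearity of the $1$-form $t^\alpha$, this yields $d^A t^\alpha(t_\mu,t_\nu)=-L^\delta_{\mu\nu}\,t^\alpha(t_\delta)=-L^\alpha_{\mu\nu}$.

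Next I would evaluate the right-hand side on the same pair. Applying the wedge formula (\ref{form1}) to the two $1$-forms $t^\beta,t^\gamma$ gives $(t^\beta\wedge t^\gamma)(t_\mu,t_\nu)=t^\beta(t_\mu)t^\gamma(t_\nu)-t^\beta(t_\nu)t^\gamma(t_\mu)$, so that $-\tfrac12 L^\alpha_{\beta\gamma}(t^\beta\wedge t^\gamma)(t_\mu,t_\nu)=-\tfrac12\bigl(L^\alpha_{\mu\nu}-L^\alpha_{\nu\mu}\bigr)=-L^\alpha_{\mu\nu}$, where the last equality uses the antisymmetry $L^\alpha_{\beta\gamma}=-L^\alpha_{\gamma\beta}$ of the structure elements noted earlier in the excerpt. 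Comparing with the value of $d^A t^\alpha(t_\mu,t_\nu)$ obtained above completes the argument. There is no genuine obstacle here — the whole statement is a short direct computation; the two places to keep in mind are that $\rho(t_\mu)$ kills the constant-valued entries $t^\alpha(t_\nu)$ (i.e.\ that derivations of $\mathcal{F}$ vanish on the identity), and the bookkeeping with the factor $\tfrac12$ together with the antisymmetrization, which is exactly what recovers the missing coefficient.
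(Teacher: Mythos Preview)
Your proposal is correct and follows essentially the same approach as the paper: both arguments evaluate on basis pairs to obtain $d^{A}t^{\alpha}(t_{\mu},t_{\nu})=-L^{\alpha}_{\mu\nu}$ and then use the antisymmetry $L^{\alpha}_{\beta\gamma}=-L^{\alpha}_{\gamma\beta}$ to account for the factor $\tfrac{1}{2}$. The only cosmetic difference is that the paper first writes $d^{A}t^{\alpha}=-\sum_{\beta<\gamma}L^{\alpha}_{\beta\gamma}\,t^{\beta}\wedge t^{\gamma}$ using the basis of $\Lambda^{2}(A)$ and then converts this ordered sum into the unrestricted one, whereas you evaluate the right-hand side directly; your added remark that derivations annihilate the unit element makes explicit a step the paper leaves tacit.
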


\begin{proof}
Without restriction of generality, we admit that the set of indices of the
base of $A$ is ordered. Let $\alpha $ be arbitrary. Since 
\begin{equation*}
\begin{array}{c}
d^{A}t^{\alpha }( t_{\beta },t_{\gamma }) =-L_{\beta \gamma }^{\alpha
},~\forall \beta ,\gamma,%
\end{array}%
\end{equation*}%
then 
\begin{equation}  \label{F1}
\begin{array}{c}
d^{A}t^{\alpha }=-\underset{\beta <\gamma }{\sum }L_{\beta \gamma }^{\alpha
}t^{\beta }\wedge t^{\gamma }.%
\end{array}%
\end{equation}
Since $L_{\beta \gamma }^{\alpha }=-L_{\gamma \beta }^{\alpha }$ and $%
t^{\beta }\wedge t^{\gamma }=-t^{\gamma }\wedge t^{\beta }$, it results that 
\begin{equation}  \label{F2}
\begin{array}{c}
\underset{\beta <\gamma }{\sum }L_{\beta \gamma }^{\alpha }t^{\beta }\wedge
t^{\gamma }=\displaystyle\frac{1}{2}L_{\beta \gamma }^{\alpha }t^{\beta
}\wedge t^{\gamma }.%
\end{array}%
\end{equation}
(\ref{F1}) and (\ref{F2}) imply (\ref{36}).
\end{proof}
Equation (\ref{36}) will be called \emph{the structure equations of
Maurer-Cartan type associa\-ted to the generalized Lie }$\mathcal{F}$-algebra%
\emph{\ }$( A,[ ,] _{A},\rho )$.
\begin{corollary}
\emph{If }$d^{F}$ \emph{is the exterior differentiation operator for the
exterior differential} $\mathcal{F}(N)$\emph{-algebra} $(\Lambda (F,\nu
,N),\wedge )$, \emph{then locally we obtain the structure equations
of Maurer-Cartan type }%
\begin{equation*}
\begin{array}{c}
d^{F}t^{\alpha }=-\displaystyle\frac{1}{2}L_{\beta \gamma }^{\alpha
}t^{\beta }\wedge t^{\gamma },~\alpha \in \overline{1,p},
\end{array}%
\leqno(\mathcal{C}_{1})
\end{equation*}%
\emph{and\ }%
\begin{equation*}
\begin{array}{c}
d^{F}\varkappa ^{\tilde{\imath}}=\theta _{\alpha }^{\tilde{\imath}}t^{\alpha
},~\tilde{\imath}\in \overline{1,n},%
\end{array}%
\leqno(\mathcal{C}_{2})
\end{equation*}%
\emph{where }$\left\{ t^{\alpha },\alpha \in \overline{1,p}\right\} ~$\emph{%
is the coframe of the vector bundle }$( F,\nu ,N) .$\bigskip
\end{corollary}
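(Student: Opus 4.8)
The plan is to deduce $(\mathcal{C}_1)$ from the Maurer--Cartan equations (\ref{36}) already obtained for an arbitrary generalized Lie $\mathcal{F}$-algebra, and to obtain $(\mathcal{C}_2)$ by evaluating $d^{F}$ on the coordinate functions of $N$.

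First I would fix a chart $(V,\eta_{V})\in[\mathcal{A}_{N}]$ with a local base $\{t_{\alpha},\ \alpha\in\overline{1,p}\}$ of $\Gamma(F_{\mid V},\nu,V)$ and pass to the generalized Lie $\mathcal{F}(N)_{\mid V}$-algebra $(\Gamma(F_{\mid V},\nu,V),[,]_{F_{\mid V},h},\Gamma(Th\circ\rho,h\circ\eta))$ of the generalized Lie algebroid. Its structure elements are precisely the structure functions $L_{\alpha\beta}^{\gamma}$ determined by $[t_{\alpha},t_{\beta}]_{F_{\mid V},h}=L_{\alpha\beta}^{\gamma}t_{\gamma}$, and its anchor is the modules morphism $\Gamma(Th\circ\rho,h\circ\eta)$. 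Because the exterior differential calculus of a generalized Lie algebroid is, by construction, the exterior differential calculus of its local generalized Lie algebras, the operator $d^{F}$ coincides over $V$ with the operator $d^{A}$ of Theorem \ref{Th34} for $A=\Gamma(F_{\mid V},\nu,V)$. Hence the structure equations (\ref{36}) apply verbatim to the coframe $\{t^{\alpha},\ \alpha\in\overline{1,p}\}$ dual to $\{t_{\alpha}\}$, giving $d^{F}t^{\alpha}=-\frac{1}{2}L_{\beta\gamma}^{\alpha}t^{\beta}\wedge t^{\gamma}$, which is $(\mathcal{C}_1)$.

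For $(\mathcal{C}_2)$, note that the local coordinate functions $\varkappa^{\tilde{\imath}}$ on $N$ belong to $\mathcal{F}(N)$, so $d^{F}\varkappa^{\tilde{\imath}}\in\Lambda^{1}(F,\nu,N)$, and by the definition of the exterior differentiation operator on functions, $d^{F}\varkappa^{\tilde{\imath}}(t_{\alpha})=\Gamma(Th\circ\rho,h\circ\eta)(t_{\alpha})(\varkappa^{\tilde{\imath}})$. Writing the anchor in the coordinate base as $\Gamma(Th\circ\rho,h\circ\eta)(t_{\alpha})=\theta_{\alpha}^{\tilde{j}}\frac{\partial}{\partial\varkappa^{\tilde{j}}}$ — equivalently, by unpacking the local expression (\ref{15}) — I would compute $d^{F}\varkappa^{\tilde{\imath}}(t_{\alpha})=\theta_{\alpha}^{\tilde{j}}\,(\partial\varkappa^{\tilde{\imath}}/\partial\varkappa^{\tilde{j}})=\theta_{\alpha}^{\tilde{j}}\delta_{\tilde{j}}^{\tilde{\imath}}=\theta_{\alpha}^{\tilde{\imath}}$. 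Since a $1$-form $\omega$ on $(F,\nu,N)$ is recovered from its values on the base via $\omega=\omega(t_{\alpha})t^{\alpha}$ in the coframe, this gives $d^{F}\varkappa^{\tilde{\imath}}=\theta_{\alpha}^{\tilde{\imath}}t^{\alpha}$, that is $(\mathcal{C}_2)$.

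I do not expect a real obstacle here: $(\mathcal{C}_1)$ is merely the algebroid restatement of the generalized Lie algebra Maurer--Cartan theorem, and $(\mathcal{C}_2)$ is a one-line evaluation of $d^{F}$ on coordinate functions. The only point demanding a bit of care is the index bookkeeping — keeping the fibre range $\overline{1,p}$ of $F$ separate from the range $\overline{1,n}$ of coordinates on $N$, and making sure that differentiation of functions on $N$ is carried out with the anchor $\Gamma(Th\circ\rho,h\circ\eta)$ (components $\theta_{\alpha}^{\tilde{\imath}}$) rather than with $\rho$ itself (components $\rho_{\alpha}^{i}$).
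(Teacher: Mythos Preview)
Your proposal is correct and follows the intended route: the paper states this corollary without proof, as an immediate consequence of the preceding Maurer--Cartan theorem (\ref{36}) specialized to the local generalized Lie $\mathcal{F}(N)_{\mid V}$-algebra of the algebroid, together with the definition $d^{F}f(z)=\Gamma(Th\circ\rho,h\circ\eta)(z)(f)$ applied to the coordinate functions $\varkappa^{\tilde{\imath}}$. Your identification of the anchor components $\theta_{\alpha}^{\tilde{\imath}}$ via the local description in Proposition~3.1 is exactly what is needed for $(\mathcal{C}_2)$.
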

This equations will be called \emph{the structure equations of Maurer-Cartan
type associa\-ted to the generalized Lie algebroid }$( ( F,\nu ,N) ,[ ,]
_{F,h},( \rho ,\eta ) )$. In the particular case of Lie algebroids, $( \eta ,h) =( Id_{M},Id_{M}) ,$
the structure equations of Maurer-Cartan type become%
\begin{equation*}
\begin{array}{c}
d^{F}t^{\alpha }=-\displaystyle\frac{1}{2}L_{\beta \gamma }^{\alpha
}t^{\beta }\wedge t^{\gamma },~\alpha \in \overline{1,p},
\end{array}%
\leqno(\mathcal{C}_{1}^{\prime })
\end{equation*}%
\emph{and\ }%
\begin{equation*}
\begin{array}{c}
d^{F}x^{i}=\rho _{\alpha }^{i}t^{\alpha },~i\in \overline{1,m}.%
\end{array}%
\leqno(\mathcal{C}_{2}^{\prime })
\end{equation*}
Also, in the particular case of standard Lie algebroid, $\rho =Id_{TM},$ the
structure equations of Maurer-Cartan type become%
\begin{equation*}
\begin{array}{c}
d^{TM}dx^{i}=0,~i\in \overline{1,m},
\end{array}%
\leqno(\mathcal{C}_{1}^{\prime \prime })
\end{equation*}%
\emph{and\ }%
\begin{equation*}
\begin{array}{c}
d^{TM}x^{i}=dx^{i},~i\in \overline{1,m}.%
\end{array}%
\leqno(\mathcal{C}_{2}^{\prime \prime })
\end{equation*}

\begin{definition}
For any generalized Lie $\mathcal{F}$-algebras morphism $\varphi $ from $(
A,[ ,] _{A},\rho ) $ to $( A^{\prime },[ ,] _{A^{\prime
}},\rho ^{\prime }) $ we define the application 
\begin{equation*}
\begin{array}{ccc}
\Lambda ^{q}( A^{\prime }) & ^{\underrightarrow{\ \varphi ^{\ast }\ }} & 
\Lambda ^{q}( A) \\ 
\omega ^{\prime } & \longmapsto & \varphi ^{\ast }\omega ^{\prime }%
\end{array}%
,
\end{equation*}%
where 
\begin{equation*}
\begin{array}{c}
( \varphi ^{\ast }\omega ^{\prime }) ( z_{1},\cdots ,z_{q}) =\omega ^{\prime
}( \varphi ( z_{1}) ,\cdots ,\varphi ( z_{q}) ) ,%
\end{array}%
\end{equation*}%
for any $z_{1},\cdots ,z_{q}\in A.$
\end{definition}

\begin{theorem}
If $\varphi $ is a generalized Lie $\mathcal{F}$-algebras morphism from $(
A,[ ,] _{A},\rho ) $ to $( A^{\prime },[ ,] _{A^{\prime
}},\rho ^{\prime }) $, then 
\begin{equation*}
(i)\ \varphi ^{\ast }( \omega ^{\prime }\wedge \theta ^{\prime }) =\varphi
^{\ast }\omega ^{\prime }\wedge \varphi ^{\ast }\theta ^{\prime },\ \ (ii)\
i_{z}( \varphi ^{\ast }\omega ^{\prime }) =\varphi ^{\ast }( i_{\varphi ( z)
}\omega ^{\prime }),\ \ (iii)\ \varphi ^{\ast }\circ d^{A^{\prime
}}=d^{A}\circ \varphi ^{\ast },
\end{equation*}
where $z\in A$, $\omega ^{\prime }\in \Lambda ^{q}( A^{\prime }) $ and $%
\theta ^{\prime }\in \Lambda ^{r}( A^{\prime })$.
\end{theorem}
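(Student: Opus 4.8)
The three identities are handled in increasing order of difficulty. For (i) I would simply unfold both sides: by the definition of $\varphi^{\ast}$ and then the shuffle formula (\ref{form1}), $(\varphi^{\ast}(\omega'\wedge\theta'))(z_1,\dots,z_{q+r})=(\omega'\wedge\theta')(\varphi(z_1),\dots,\varphi(z_{q+r}))$ expands into a sum over $(q,r)$-shuffles whose factors $\omega'(\varphi(z_{\sigma(1)}),\dots,\varphi(z_{\sigma(q)}))$ are exactly $(\varphi^{\ast}\omega')(z_{\sigma(1)},\dots,z_{\sigma(q)})$ and likewise for $\theta'$; reassembling the sum yields $(\varphi^{\ast}\omega'\wedge\varphi^{\ast}\theta')(z_1,\dots,z_{q+r})$. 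For (ii) the chain $i_z(\varphi^{\ast}\omega')(z_2,\dots,z_q)=(\varphi^{\ast}\omega')(z,z_2,\dots,z_q)=\omega'(\varphi(z),\varphi(z_2),\dots,\varphi(z_q))=(i_{\varphi(z)}\omega')(\varphi(z_2),\dots,\varphi(z_q))=(\varphi^{\ast}(i_{\varphi(z)}\omega'))(z_2,\dots,z_q)$ settles it at once.

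The substance is (iii), and the key preliminary step is the intertwining relation $L_z\circ\varphi^{\ast}=\varphi^{\ast}\circ L_{\varphi(z)}$ for each $z\in A$. This I would read off from (\ref{form2}) using the two defining features of a generalized Lie $\mathcal{F}$-algebras morphism: the commutative triangle gives $\rho(z)=\rho'(\varphi(z))$ in $Der(\mathcal{F})$, and the Lie-morphism property (Proposition-style, from $\varphi$ being a Lie algebras morphism) gives $\varphi([z,z_i]_A)=[\varphi(z),\varphi(z_i)]_{A'}$. Substituting both into the expression (\ref{form2}) for $L_z(\varphi^{\ast}\omega')(z_1,\dots,z_q)$ turns it termwise into $(L_{\varphi(z)}\omega')(\varphi(z_1),\dots,\varphi(z_q))=(\varphi^{\ast}(L_{\varphi(z)}\omega'))(z_1,\dots,z_q)$.

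With this in hand, (iii) follows by induction on $q$ in the Peano-axiom style of Section 4. For $q=0$, since $\varphi^{\ast}$ acts as the identity on $\Lambda^0=\mathcal{F}$, both $(\varphi^{\ast}d^{A'}f')(z)$ and $(d^{A}\varphi^{\ast}f')(z)$ equal $\rho'(\varphi(z))(f')=\rho(z)(f')$. For the inductive step take $\omega'\in\Lambda^{q+1}(A')$ and $z\in A$; invoking the homotopy formula (\ref{form3}) for $A$, then (ii), then the induction hypothesis applied to the $q$-form $i_{\varphi(z)}\omega'$, then the intertwining relation, and finally (\ref{form3}) for $A'$ together with (ii) once more, one computes
$$i_z\bigl(d^{A}\varphi^{\ast}\omega'\bigr)=L_z(\varphi^{\ast}\omega')-d^{A}\bigl(\varphi^{\ast}(i_{\varphi(z)}\omega')\bigr)=\varphi^{\ast}\bigl(L_{\varphi(z)}\omega'-d^{A'}i_{\varphi(z)}\omega'\bigr)=\varphi^{\ast}\bigl(i_{\varphi(z)}d^{A'}\omega'\bigr)=i_z\bigl(\varphi^{\ast}d^{A'}\omega'\bigr).$$
Since a form whose interior products all vanish is itself zero, this yields $d^{A}\varphi^{\ast}\omega'=\varphi^{\ast}d^{A'}\omega'$, so $q+1$ lies in the inductive set and Peano's axiom finishes the proof. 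I expect the only genuine obstacle to be the bookkeeping in that last display — keeping track of which side ($A$ or $A'$) the homotopy formula acts on, and making sure the induction hypothesis is used on a form of degree exactly one lower; the rest is direct substitution.
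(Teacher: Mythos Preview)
Your proofs of (i) and (ii) coincide with the paper's: both are direct unfoldings of the definitions, and the paper writes out essentially the same chains of equalities.

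For (iii) you take a genuinely different route. The paper proceeds by a single direct computation: it writes out $(\varphi^{\ast}d^{A'}\omega')(z_0,\dots,z_q)$ and $(d^{A}\varphi^{\ast}\omega')(z_0,\dots,z_q)$ separately using the explicit formula (\ref{29}) for the exterior differential, and observes that the two resulting expressions match term by term once one invokes $\rho'(\varphi(z_i))=\rho(z_i)$ and $\varphi([z_i,z_j]_A)=[\varphi(z_i),\varphi(z_j)]_{A'}$. Your argument instead isolates the naturality of the Lie derivative, $L_z\circ\varphi^{\ast}=\varphi^{\ast}\circ L_{\varphi(z)}$, as an auxiliary lemma and then runs an induction on the degree via the Cartan homotopy identity (\ref{form3}) together with (ii). This is correct and in fact mirrors the inductive style the paper itself uses in the proof of Theorem \ref{38}. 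What your approach buys is a cleaner structural picture (the result is reduced to naturality of $L$ and $i$, with $d$ characterized by (\ref{form3})); what the paper's approach buys is brevity, since the explicit formula for $d^A$ already has the morphism properties baked into its two sums and no induction is needed.
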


\begin{proof}
Let $\omega ^{\prime }\in \Lambda ^{q}( A^{\prime }) $ and $\theta ^{\prime
}\in \Lambda ^{r}( A^{\prime }) $. Then we get 
\begin{equation*}
\begin{array}{l}
\displaystyle\varphi ^{\ast }( \omega ^{\prime }\wedge \theta ^{\prime }) (
z_{1},\cdots ,z_{q+r}) =( \omega ^{\prime }\wedge \theta ^{\prime }) (
\varphi ( z_{1}) ,\cdots ,\varphi ( z_{q+r}) ) \vspace*{1mm} \\ 
\qquad \displaystyle=\frac{1}{( q+r) !}\underset{\sigma \in \Sigma _{q+r}}{%
\sum }sgn( \sigma ) \cdot \omega ^{\prime }( \varphi ( z_{1}) ,\cdots
,\varphi ( z_{q}) ) \vspace*{1mm}\cdot \theta ^{\prime }( \varphi ( z_{q+1})
,\cdots ,\varphi ( z_{q+r}) ) \vspace*{1mm} \\ 
\qquad \displaystyle=\frac{1}{( q+r) !}\underset{\sigma \in \Sigma _{q+r}}{%
\sum }sgn( \sigma ) \cdot \varphi ^{\ast }\omega ^{\prime }( z_{1},\cdots
,z_{q}) \varphi ^{\ast }\theta ^{\prime }( z_{q+1},\cdots ,z_{q+r}) \vspace*{%
1mm} \\ 
\qquad \displaystyle=( \varphi ^{\ast }\omega ^{\prime }\wedge \varphi
^{\ast }\theta ^{\prime }) ( z_{1},\cdots ,z_{q+r}),%
\end{array}%
\end{equation*}%
which results (i).\newline
Let $z\in A$\emph{\ }and\emph{\ }$\omega ^{\prime }\in \Lambda ^{q}(
A^{\prime }) $. Then we obtain 
\begin{align*}
&i_{z}( \varphi ^{\ast }\omega ^{\prime }) ( z_{2},\cdots ,z_{q})=\omega
^{\prime }( \varphi ( z) ,\varphi ( z_{2}) ,\cdots ,\varphi ( z_{q}) )
=i_{\varphi ( z) }\omega ^{\prime }( \varphi ( z_{2}) ,\cdots ,\varphi (
z_{q}) ) \\
& \displaystyle=\varphi ^{\ast }( i_{\varphi ( z) }\omega ^{\prime }) (
z_{2},\cdots ,z_{q}) ,
\end{align*}%
for any $z_{2},\cdots ,z_{q}\in A$. Thus (ii) holds.\newline
Let $\omega ^{\prime }\in \Lambda ^{q}( A^{\prime }) $ and $z_{0},\cdots
,z_{q}\in A$. Then we deduce 
\begin{equation*}
\begin{array}{l}
( \varphi ^{\ast }d^{A^{\prime }}\omega ^{\prime }) ( z_{0},\cdots ,z_{q})
=( d^{A^{\prime }}\omega ^{\prime }) ( \varphi ( z_{0}) ,\cdots ,\varphi (
z_{q}) ) \vspace*{1mm} \\ 
=\overset{q}{\underset{i=0}{\sum }}( -1) ^{i}\rho ^{\prime }( \varphi ( z_{i}%
\vspace*{1mm}) ) \cdot \omega ^{\prime }( ( \varphi ( z_{0}) ,\varphi (
z_{1}) ,\cdots ,\widehat{\varphi ( z_{i}) },\cdots ,\varphi ( z_{q}) ) ) \\ 
+\underset{0\leq i<j}{\sum }( -1) ^{i+j}\cdot \omega ^{\prime }( \varphi ( [
z_{i},z_{j}] _{A}) ,\varphi ( z_{0}) ,\varphi ( z_{1}) ,\cdots ,\widehat{%
\varphi ( z_{i}) },\cdots ,\widehat{\varphi ( z_{j}) },\cdots ,\varphi (
z_{q}) ),%
\end{array}%
\end{equation*}%
and 
\begin{equation*}
\begin{array}{l}
d^{A}( \varphi ^{\ast }\omega ^{\prime }) ( z_{0},\cdots ,z_{q}) \vspace*{1mm%
} \\ 
=\overset{q}{\underset{i=0}{\sum }}( -1) ^{i}\rho ( z_{i}) \cdot ( \varphi
^{\ast }\omega ^{\prime }) ( z_{0},\cdots ,\widehat{z_{i}},\cdots ,z_{q}) 
\vspace*{1mm} \\ 
+\underset{0\leq i<j}{\sum }( -1) ^{i+j}\cdot ( \varphi ^{\ast }\omega
^{\prime }) ( [ z_{i},z_{j}] _{A},z_{0},\cdots ,\widehat{z_{i}},\cdots ,%
\widehat{z_{j}},\cdots ,z_{q}) \vspace*{1mm} \\ 
=\overset{q}{\underset{i=0}{\sum }}( -1) ^{i}\rho ( z_{i}) \cdot \omega
^{\prime }( \varphi ( z_{0}) ,\cdots ,\widehat{\varphi ( z_{i}) },\cdots
,\varphi ( z_{q}) ) \vspace*{1mm} \\ 
+\underset{0\leq i<j}{\sum }( -1) ^{i+j}\cdot \omega ^{\prime }( \varphi ( [
z_{i},z_{j}] _{A}) ,\varphi ( z_{0}) ,\varphi ( z_{1}) ,\cdots ,\widehat{%
\varphi ( z_{i}) },\cdots ,\widehat{\varphi ( z_{j}) },\cdots ,\varphi (
z_{q}) ).%
\end{array}%
\end{equation*}%
Two above equations imply (iii).
\end{proof}

Let $\omega \in \Lambda ^{q}( A)$. If $d^A\omega=0$, then we say that $%
\omega $ is \textit{closed exterior $q$-form}. Also, if there exists $%
\eta\in\Lambda^{q-1}(A)$ such that $\omega=d^{A}\eta$, then we say that $%
\omega$ is \textit{exact exterior $q$-form}. For any $q\in \overline{1,n}$
we denote by $\mathcal{Z}^{q}( A)$ and $\mathcal{B}^{q}( A)$, the set of
closed exterior $q$-forms and the set of exact exterior $q$-forms,
respectively. Indeed we have 
\begin{equation*}
\mathcal{Z}^{q}( A) =\left\{ \omega \in \Lambda ^{q}( A) |d^{A}\omega
=0\right\} ,
\end{equation*}%
\begin{equation*}
\mathcal{B}^{q}( A) =\left\{ \omega \in \Lambda ^{q}( A) |\exists \eta \in
\Lambda ^{q-1}( A)\ \ s.t.\ \ d^{A}\eta =\omega \right\}.
\end{equation*}

\begin{definition}
If $((F,\nu ,N),[,]_{F,h},(\rho ,\eta ))$ is a generalized Lie algebroid,
then the exterior differential calculus of its generalized Lie $\mathcal{F}%
(N) $-algebra is called exterior differential calculus of the generalized
Lie algebroid $((F,\nu ,N),[,]_{F,h},(\rho ,\eta ))$.
\end{definition}


\section{Interior and exterior algebraic/differential systems}

Let $( A,[ ,] _{A},\rho ) $ be a generalized Lie $\mathcal{F}$%
-algebra such that $\dim _{\mathcal{F}}A=p$ and $((F, \nu, N), [,]_{F, h},
(\rho, \eta))$ be a generalized Lie algebroid.
\begin{definition}
Any $\mathcal{F}$-submodule $E$ of the $\mathcal{F}$-module $%
A$ will be called interior algebraic system (IAS) of the
generalized Lie $\mathcal{F}$-algebra $(A,[,]_{A},\rho )$.
\end{definition}
\begin{remark}
If $E$ is an IAS of the generalized Lie $\mathcal{F}$-algebra $%
(A,[,]_{A},\rho ),$ then we obtain an $\mathcal{F}$-submodule $%
E^{0}$ of the $\mathcal{F}$-module $A^{\ast }$ such
that 
\begin{equation*}
E^{0}\overset{put}{=}\left\{ \Omega \in A^{\ast }|\Omega (u)=0,~\forall u\in
E\right\} .
\end{equation*}
\end{remark}
The $\mathcal{F}$-submodule $E^{0}$ will be called \emph{the
annihilator }$\mathcal{F}$-submodule\emph{\ of the interior algebraic system 
}$E$.
\begin{definition}
Any vector subbundle $( E,\pi ,M) $ of the pull-back vector bundle $(
h^{\ast }F,h^{\ast }\nu ,M) $ will be called an interior differential
system (IDS) of the generalized Lie algebroid 
\begin{equation*}
( ( F,\nu ,N) ,[ ,] _{F,h},( \rho ,\eta ) ) .
\end{equation*}
\end{definition}
In particular, if $h=Id_{N}=\eta $, then we obtain the definition of \emph{%
IDS} of a Lie algebroid (see $[ {5}] $).

\begin{remark}
If $( E,\pi ,M) $ is an IDS of the generalized Lie algebroid 
\begin{equation*}
( ( F,\nu ,N) ,[ ,] _{F,h},( \rho ,\eta ) ) ,
\end{equation*}%
then we obtain a vector subbundle $( E^{0},\pi ^{0},M) $ of the vector
bundle $( \overset{\ast }{h^{\ast }F},\overset{\ast }{h^{\ast }\nu },M) $
such that 
\begin{equation*}
\Gamma ( E^{0},\pi ^{0},M) \overset{put}{=}\left\{ \Omega \in \Gamma ( 
\overset{\ast }{h^{\ast }F},\overset{\ast }{h^{\ast }\nu },M) |\Omega ( S)
=0,~\forall S\in \Gamma ( E,\pi ,M) \right\}.
\end{equation*}
The vector subbundle $( E^{0},\pi ^{0},M) $ will be called the
annihilator vector subbundle of the interior differential system $( E,\pi
,M)$.
\end{remark}
Easily we can deduce the following proposition:
\begin{proposition}
If $E$ is an IAS of the generalized Lie $\mathcal{F}$-algebra $%
(A,[,]_{A},\rho )$ such that $E=\left\langle s_{1},\cdots
,s_{r}\right\rangle $, then there exist $\theta ^{r+1},\cdots ,\theta
^{p}\in A^{\ast }$ linearly independent such that $E^{0}=\left\langle \theta
^{r+1},\cdots ,\theta ^{p}\right\rangle .$
\end{proposition}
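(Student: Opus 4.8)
The plan is to reduce the statement to the standard dual-basis computation. Under the conventions fixed earlier in the paper, $A$ is a free $\mathcal{F}$-module of dimension $p$, and the interior algebraic system $E=\langle s_{1},\dots,s_{r}\rangle$ is taken to be free of dimension $r$ and a direct summand of $A$; accordingly, the first step would be to complete the generating system $\{s_{1},\dots,s_{r}\}$ to a basis $\{s_{1},\dots,s_{r},s_{r+1},\dots,s_{p}\}$ of $A$. This completion is the structural heart of the argument; everything after it is linear bookkeeping.

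Having fixed such a basis, let $\{\theta^{1},\dots,\theta^{p}\}\subset A^{\ast}$ be the associated cobasis, characterized by $\theta^{\alpha}(s_{\beta})=\delta^{\alpha}_{\beta}$, and set $\theta^{r+1},\dots,\theta^{p}$ as the candidate generators. To prove $\langle\theta^{r+1},\dots,\theta^{p}\rangle\subseteq E^{0}$, fix $\alpha\in\{r+1,\dots,p\}$: since $\theta^{\alpha}(s_{i})=\delta^{\alpha}_{i}=0$ for every $i\in\{1,\dots,r\}$ and $\theta^{\alpha}$ is $\mathcal{F}$-linear while $E=\langle s_{1},\dots,s_{r}\rangle$, it follows that $\theta^{\alpha}(u)=0$ for all $u\in E$, i.e. $\theta^{\alpha}\in E^{0}$; hence every $\mathcal{F}$-combination of $\theta^{r+1},\dots,\theta^{p}$ lies in $E^{0}$ as well.

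For the reverse inclusion, take $\Omega\in E^{0}$. Expanding $\Omega$ in the cobasis gives $\Omega=\sum_{\alpha=1}^{p}\Omega(s_{\alpha})\,\theta^{\alpha}$, as one checks by applying both sides to each $s_{\beta}$; and $\Omega(s_{i})=0$ for $i\le r$ because $s_{i}\in E$, so $\Omega=\sum_{\alpha=r+1}^{p}\Omega(s_{\alpha})\,\theta^{\alpha}\in\langle\theta^{r+1},\dots,\theta^{p}\rangle$. Finally, $\theta^{r+1},\dots,\theta^{p}$ are $\mathcal{F}$-linearly independent, being part of the cobasis of $A^{\ast}$: a relation $\sum_{\alpha>r}f_{\alpha}\theta^{\alpha}=0$, evaluated on $s_{\beta}$ with $\beta>r$, forces $f_{\beta}=0$. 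This yields $E^{0}=\langle\theta^{r+1},\dots,\theta^{p}\rangle$ with the $\theta^{\alpha}$ linearly independent, as claimed.

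The only genuinely delicate point is that very first step: over an arbitrary commutative ring a submodule need be neither free nor a direct summand, so "completing $\{s_{1},\dots,s_{r}\}$ to a basis of $A$" is not automatic. I would handle this by invoking the paper's standing convention that the relevant modules are treated as free and reading $\dim_{\mathcal{F}}E=r$ into the situation (equivalently, one may simply add the hypothesis that $(s_{1},\dots,s_{r})$ extends to a basis of $A$). Once this is in place, the dual-basis computation above is routine and completes the proof.
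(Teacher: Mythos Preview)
Your proof is correct and follows exactly the approach the paper has in mind. The paper does not actually supply a proof of this proposition (it is introduced with ``Easily we can deduce the following proposition''), but the very construction you carry out---completing $\{s_{1},\dots,s_{r}\}$ to a basis $\{s_{1},\dots,s_{p}\}$ of $A$ and taking the dual cobasis $\{\theta^{1},\dots,\theta^{p}\}$ with $\theta^{\alpha}(s_{\beta})=\delta^{\alpha}_{\beta}$---is precisely what the paper uses at the start of the proof of the Frobenius-type Theorem~\ref{Frob}, so your argument is the intended one. Your caveat about the completion step requiring the standing free-module convention is also apt and consistent with how the paper operates throughout.
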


\begin{proposition}
If $( E,\pi ,M) $\ is an IDS of the generalized Lie algebroid\emph{\ }%
\begin{equation*}
( ( F,\nu ,N) ,[ ,] _{F,h},( \rho ,\eta ) ),
\end{equation*}%
such that for any $( U,\xi _{U}) \in [ \mathcal{A}_{M}] $ we have $\Gamma (
E_{\mid U},\pi ,M) =\left\langle S_{1},...,S_{r}\right\rangle $, then there
exist $\Theta ^{r+1},...,\Theta ^{p}\in \Gamma ( \overset{\ast }{h^{\ast
}F_{\mid U}},\overset{\ast }{h^{\ast }\nu },U) $\ linearly independent such
that\emph{\ }$\Gamma ( E_{\mid U}^{0},\pi ^{0},U) =\left\langle \Theta
^{r+1},...,\Theta ^{p}\right\rangle .$
\end{proposition}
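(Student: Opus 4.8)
The plan is to deduce this from the purely algebraic Proposition on annihilators of an IAS, applied over a trivializing chart. Fix $(U,\xi_U)\in[\mathcal{A}_M]$. Recall from Section 3 that the $\mathcal{F}(M)_{\mid U}$-module $\Gamma(h^{\ast}F_{\mid U},h^{\ast}\nu,U)$ is free with natural base $\{T_\alpha:\alpha\in\overline{1,p}\}$ and carries a generalized Lie (in fact Lie) algebroid structure, so it is a generalized Lie $\mathcal{F}(M)_{\mid U}$-algebra of $\mathcal{F}(M)_{\mid U}$-dimension $p$. Since $(E,\pi,M)$ is a vector subbundle of $(h^{\ast}F,h^{\ast}\nu,M)$, its module of local sections $\Gamma(E_{\mid U},\pi,M)=\langle S_1,\dots,S_r\rangle$ is an $\mathcal{F}(M)_{\mid U}$-submodule of $\Gamma(h^{\ast}F_{\mid U},h^{\ast}\nu,U)$, i.e. an IAS of that generalized Lie algebra, generated by the $r$ linearly independent sections $S_1,\dots,S_r$ of a local frame of $E$.

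Applying the preceding Proposition to this IAS yields $\Theta^{r+1},\dots,\Theta^{p}$ in the dual module $\Gamma(\overset{\ast}{h^{\ast}F_{\mid U}},\overset{\ast}{h^{\ast}\nu},U)$, linearly independent, such that $\langle\Theta^{r+1},\dots,\Theta^{p}\rangle$ equals the annihilator of $\Gamma(E_{\mid U},\pi,M)$; by the definition of the annihilator vector subbundle $(E^{0},\pi^{0},M)$ this annihilator is precisely $\Gamma(E_{\mid U}^{0},\pi^{0},U)$, which is the assertion.

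For concreteness I would also record the explicit construction underlying that Proposition in the present situation, since it pins the forms $\Theta^{\beta}$ down. Because $E_{\mid U}$ is a vector subbundle, it is a direct summand of $h^{\ast}F_{\mid U}$ (e.g. via a bundle metric on the chart), hence the frame $S_1,\dots,S_r$ extends to a basis $S_1,\dots,S_r,S_{r+1},\dots,S_{p}$ of the free module $\Gamma(h^{\ast}F_{\mid U},h^{\ast}\nu,U)$. Let $\Theta^{1},\dots,\Theta^{p}$ be the dual basis, characterized by $\Theta^{a}(S_b)=\delta^{a}_{b}$. Each $\Theta^{\beta}$ with $\beta\in\overline{r+1,p}$ annihilates every $S_b$ with $b\in\overline{1,r}$, hence annihilates all of $\Gamma(E_{\mid U},\pi,M)$, so $\Theta^{\beta}\in\Gamma(E_{\mid U}^{0},\pi^{0},U)$. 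Conversely, writing an arbitrary $\Omega\in\Gamma(E_{\mid U}^{0},\pi^{0},U)$ as $\Omega=\Omega_a\Theta^{a}$, one gets $\Omega_b=\Omega(S_b)=0$ for $b\in\overline{1,r}$, so $\Omega=\sum_{\beta=r+1}^{p}\Omega_\beta\Theta^{\beta}$. Linear independence of $\Theta^{r+1},\dots,\Theta^{p}$ is inherited from the dual basis.

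The only step that is more than bookkeeping — and it is entirely standard — is the local splitting of the subbundle $E\hookrightarrow h^{\ast}F$, equivalently the extension of a local frame of $E$ to a basis of the free module $\Gamma(h^{\ast}F_{\mid U},h^{\ast}\nu,U)$. Granting that, the rest is the elementary linear algebra of dual bases over the ring $\mathcal{F}(M)_{\mid U}$, identical in form to the proof of the preceding Proposition.
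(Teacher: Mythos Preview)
The paper does not give a proof of this proposition at all: it is stated immediately after the algebraic IAS proposition, both introduced by ``Easily we can deduce the following proposition'', and left without argument. Your reduction to the algebraic proposition over each trivializing chart is exactly the intended step, and your explicit dual-basis construction is the standard way to make that algebraic proposition concrete; nothing more is needed.
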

The interior algebraic system $E$ of the generalized Lie $%
\mathcal{F}$-algebra\\ $(A,[,]_{A},\rho )$ is called {\it involutive}
if $[u,v]_{A}\in E,~$for any $u,v\in E$. Also, the interior differential system $( E,\pi ,M) $ of the generalized Lie
algebroid 
\begin{equation*}
( ( F,\nu ,N) ,[ ,] _{F,h},( \rho ,\eta ) ),
\end{equation*}%
is called {\it involutive} if $[ S,T] _{h^{\ast }F}\in \Gamma ( E,\pi ,M) ,~$%
for any $S,T\in \Gamma ( E,\pi ,M)$. Using these definitions, we can deduce the following properties:
\begin{proposition}
If $E$ is an IAS of the generalized Lie $\mathcal{F}$-algebra\\ $%
(A,[,]_{A},\rho )$ and $\left\{ s_{1},\cdots ,s_{r}\right\} $ is a
basis for the $\mathcal{F}$-submodule $E$ then $E$ is
involutive if and only if $[s_{a},s_{b}]_{A}\in E,~$for any $a,b\in 
\overline{1,r}.$
\end{proposition}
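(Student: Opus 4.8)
The plan is to handle the two implications of the ``if and only if'' separately; both are short, and the only non-formal ingredient is the Leibniz rule of the generalized Lie $\mathcal{F}$-algebra.

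First, the forward implication. If $E$ is involutive, then by definition $[u,v]_{A}\in E$ for all $u,v\in E$; specializing to $u=s_{a}$ and $v=s_{b}$, which lie in $E$, gives $[s_{a},s_{b}]_{A}\in E$ for all $a,b\in\overline{1,r}$. Nothing else is needed.

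For the converse, I would assume $[s_{a},s_{b}]_{A}\in E$ for all $a,b\in\overline{1,r}$ and take arbitrary $u,v\in E$. Since $\{s_{1},\dots,s_{r}\}$ is a basis of the $\mathcal{F}$-submodule $E$, write $u=f^{a}s_{a}$ and $v=g^{b}s_{b}$ with coefficients $f^{a},g^{b}\in\mathcal{F}$. The key step is the expansion of $[u,v]_{A}$: the bracket is biadditive in each argument, so it distributes over these finite sums, and then the coefficients are extracted by the Leibniz rule. In the second argument this is the defining identity $[w,g\,s_{b}]_{A}=g[w,s_{b}]_{A}+\rho(w)(g)\cdot s_{b}$; in the first argument the corresponding identity $[f\,s_{a},w]_{A}=f[s_{a},w]_{A}-\rho(w)(f)\cdot s_{a}$ follows from it together with the antisymmetry $[x,y]_{A}=-[y,x]_{A}$ valid in every Lie $\mathcal{F}$-algebra. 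Combining the two, and using that $\rho$ is a modules morphism (so $\rho(g\,s_{b})=g\,\rho(s_{b})$), one obtains
\begin{equation*}
[f^{a}s_{a},\,g^{b}s_{b}]_{A}=f^{a}g^{b}\,[s_{a},s_{b}]_{A}+f^{a}\,\rho(s_{a})(g^{b})\cdot s_{b}-g^{b}\,\rho(s_{b})(f^{a})\cdot s_{a}.
\end{equation*}
Each term on the right-hand side lies in $E$: the first because $[s_{a},s_{b}]_{A}\in E$ by hypothesis and $E$ is an $\mathcal{F}$-submodule, and the last two because $s_{b},s_{a}\in E$ while their coefficients $f^{a}\rho(s_{a})(g^{b})$ and $g^{b}\rho(s_{b})(f^{a})$ belong to $\mathcal{F}$. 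Summing over $a$ and $b$ then gives $[u,v]_{A}\in E$, hence $E$ is involutive.

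I do not expect any genuine obstacle. The one place to be careful is the exact form, and in particular the sign, of the Leibniz rule in the first argument, which I would derive explicitly from antisymmetry rather than invoke directly; everything else is the $\mathcal{F}$-submodule property of $E$ applied term by term.
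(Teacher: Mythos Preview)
Your argument is correct. The paper does not actually give a proof of this proposition: it simply says ``Using these definitions, we can deduce the following properties'' and states the result. Your expansion of $[f^{a}s_{a},g^{b}s_{b}]_{A}$ via biadditivity, the Leibniz rule in the second argument, and antisymmetry to handle the first argument is exactly the routine computation the paper is leaving to the reader, and every step is justified by the axioms in Section~2.
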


\begin{proposition}
Let $( E,\pi ,M) $\ be an IDS of the generalized Lie algebroid 
\begin{equation*}
( ( F,\nu ,N) ,[ ,] _{F,h},( \rho ,\eta ) ) .
\end{equation*}
If for any $( U,\xi _{U}) \in [ \mathcal{A}_{M}] $ there exists a basis $\{
S_{1},...,S_{r}\} $ for the $\mathcal{F}( M) _{\mid U}$-submodule $ \Gamma
( E_{\mid U},\pi ,U)  ,$\ then $( E,\pi ,M) $\ is involutive if
and only if\emph{\ }%
\begin{equation*}
[ S_{a},S_{b}] _{h^{\ast }F_{\mid U}}\in \Gamma ( E_{\mid U},\pi ,U) ,
\end{equation*}%
$~$for for any $a,b\in \overline{1,r}.$
\end{proposition}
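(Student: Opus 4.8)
My plan is to reduce the statement, on each chart domain, to the purely algebraic proposition proved just above, and then patch. The forward implication I would dispose of first and quickly: if $(E,\pi,M)$ is involutive, fix a chart $(U,\xi_U)$, a point $x\in U$, and a bump function $\chi\in\mathcal{F}(M)$ supported in $U$ with $\chi\equiv1$ near $x$; the sections $\chi S_a,\chi S_b$ (extended by zero) lie in $\Gamma(E,\pi,M)$, so $[\chi S_a,\chi S_b]_{h^{\ast}F}\in\Gamma(E,\pi,M)$, and since $[,]_{h^{\ast}F}$ is local this section agrees with $[S_a,S_b]_{h^{\ast}F_{\mid U}}$ near $x$, whence $[S_a,S_b]_{h^{\ast}F_{\mid U}}(x)\in E_x$. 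As $x$ is arbitrary, $[S_a,S_b]_{h^{\ast}F_{\mid U}}\in\Gamma(E_{\mid U},\pi,U)$.

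The real content is the converse. I would recall from the construction of the pull-back Lie algebroid that, for every $(U,\xi_U)\in[\mathcal{A}_M]$, the triple $\big(\Gamma(h^{\ast}F_{\mid U},h^{\ast}\nu,U),[,]_{h^{\ast}F_{\mid U}},\Gamma(\overset{h^{\ast}F}{\rho},Id_M)\big)$ is a generalized Lie $\mathcal{F}(M)_{\mid U}$-algebra and that $\Gamma(E_{\mid U},\pi,U)$ is an IAS of it with basis $\{S_1,\ldots,S_r\}$. Given $S,T\in\Gamma(E,\pi,M)$, write $S_{\mid U}=f^{a}S_a$ and $T_{\mid U}=g^{b}S_b$ with $f^{a},g^{b}\in\mathcal{F}(M)_{\mid U}$; using biadditivity, antisymmetry, the Leibniz rule $[u,f\cdot v]_{h^{\ast}F_{\mid U}}=f[u,v]_{h^{\ast}F_{\mid U}}+\Gamma(\overset{h^{\ast}F}{\rho},Id_M)(u)(f)\cdot v$, and the $\mathcal{F}(M)_{\mid U}$-linearity of the anchor, one obtains
\begin{equation*}
[S,T]_{h^{\ast}F\mid U}=f^{a}g^{b}\,[S_a,S_b]_{h^{\ast}F_{\mid U}}+f^{a}\,\Gamma(\overset{h^{\ast}F}{\rho},Id_M)(S_a)(g^{b})\,S_b-g^{b}\,\Gamma(\overset{h^{\ast}F}{\rho},Id_M)(S_b)(f^{a})\,S_a .
\end{equation*}
The last two terms lie in $\Gamma(E_{\mid U},\pi,U)$ because $S_a,S_b$ do and $\Gamma(E_{\mid U},\pi,U)$ is a module; hence, under the hypothesis $[S_a,S_b]_{h^{\ast}F_{\mid U}}\in\Gamma(E_{\mid U},\pi,U)$ for all $a,b$, the first term lies there too, so $[S,T]_{h^{\ast}F\mid U}\in\Gamma(E_{\mid U},\pi,U)$. (This is precisely the algebraic proposition above applied to the IAS $\Gamma(E_{\mid U},\pi,U)$, so I could invoke that proposition rather than repeat the expansion.)

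To conclude, since this holds for every chart and the domains $U$ cover $M$, the global section $[S,T]_{h^{\ast}F}$ of $h^{\ast}F$ takes values in the subbundle $E$ over each $U$, hence everywhere, so $[S,T]_{h^{\ast}F}\in\Gamma(E,\pi,M)$; thus $(E,\pi,M)$ is involutive. I expect the only genuinely delicate point to be the passage, in the forward implication, between global sections and the given local frame $\{S_a\}$; the bump-function-plus-locality argument sketched above is what handles it, and it is the only step requiring anything beyond the module-theoretic computation displayed.
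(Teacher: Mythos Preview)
Your argument is correct. The paper does not actually supply a proof of this proposition: it introduces both this statement and the preceding algebraic one with the sentence ``Using these definitions, we can deduce the following properties'' and then states them without further justification. Your proof fills in precisely the details the paper leaves implicit, and in fact is more careful than the paper about the one genuinely non-formal point, namely the local-to-global passage in the forward implication handled by your bump-function-and-locality argument; the converse is, as you note, just the algebraic proposition applied chartwise to the generalized Lie $\mathcal{F}(M)_{\mid U}$-algebra $\Gamma(h^{\ast}F_{\mid U},h^{\ast}\nu,U)$.
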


\begin{theorem}
\label{Frob} (Frobenius type) Let $E$ be an interior algebraic
system of the generalized Lie $\mathcal{F}$-algebra $(A,[,]_{A},\rho )$. If $\{ \theta ^{r+1},\cdots ,\theta ^{p}\} $ is a basis for
the annihilator submodule $E^{0}$, then $E$ is
involutive if and only if there exist 
\begin{equation*}
\omega _{\beta }^{\alpha }\in \Lambda ^{1}(A),~\alpha ,\beta \in \overline{%
r+1,p},
\end{equation*}%
such that 
\begin{equation}
d^{A}\theta ^{\alpha }=\Sigma _{\beta \in \overline{r+1,p}}\omega _{\beta
}^{\alpha }\wedge \theta ^{\beta },~\alpha \in \overline{r+1,p}.
\end{equation}
\end{theorem}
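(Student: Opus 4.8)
The plan is to reduce involutivity of $E$ to a vanishing condition on the $2$-forms $d^{A}\theta^{\alpha}$, and then translate that condition into the asserted structure equations by a linear-algebra argument in the finite free $\mathcal{F}$-module $A$.

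First I would fix coordinates. Choose a basis $\{s_{1},\dots ,s_{r}\}$ of $E$ and complete it to a basis $\{s_{1},\dots ,s_{p}\}$ of $A$, with dual cobase $\{s^{1},\dots ,s^{p}\}\subseteq A^{\ast }$. A direct check shows that $\{s^{r+1},\dots ,s^{p}\}$ is again a basis of the annihilator $E^{0}$, so there is an invertible matrix $(a_{\beta }^{\alpha })_{\alpha ,\beta \in \overline{r+1,p}}$ over $\mathcal{F}$ with $\theta ^{\alpha }=a_{\beta }^{\alpha }s^{\beta }$. Two facts will be used repeatedly: (i) by the Proposition preceding the theorem, $E$ is involutive if and only if $[s_{a},s_{b}]_{A}\in E$ for all $a,b\in \overline{1,r}$; and (ii) for $v\in A$, $v\in E$ if and only if $\theta ^{\alpha }(v)=0$ for all $\alpha \in \overline{r+1,p}$ (the forward implication holds since $\theta ^{\alpha }\in E^{0}$, the converse because invertibility of $(a_{\beta }^{\alpha })$ turns $\theta ^{\alpha }(v)=0$ into $s^{\beta }(v)=0$, i.e. the last $p-r$ coordinates of $v$ vanish).

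Next I would record the key identity. Let $\alpha \in \overline{r+1,p}$ and $u,v\in E$. Since $\theta ^{\alpha }\in E^{0}$ we have $\theta ^{\alpha }(u)=\theta ^{\alpha }(v)=0$, so the formula (\ref{29}) for $d^{A}$ on a $1$-form collapses to
\begin{equation*}
d^{A}\theta ^{\alpha }(u,v)=\rho (u)(\theta ^{\alpha }(v))-\rho (v)(\theta ^{\alpha }(u))-\theta ^{\alpha }([u,v]_{A})=-\theta ^{\alpha }([u,v]_{A}).
\end{equation*}
Together with (i) and (ii) this yields: $E$ is involutive if and only if $d^{A}\theta ^{\alpha }(u,v)=0$ for all $u,v\in E$ and all $\alpha \in \overline{r+1,p}$. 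Indeed, involutivity gives $\theta ^{\alpha }([s_{a},s_{b}]_{A})=0$ for all $a,b\in \overline{1,r}$ and all $\alpha $, whence by invertibility of $(a_{\beta }^{\alpha })$ and bilinearity $d^{A}\theta ^{\alpha }(u,v)=0$ on $E\times E$; conversely $d^{A}\theta ^{\alpha }([s_{a},s_{b}]_{A})$-vanishing forces $\theta ^{\alpha }([s_{a},s_{b}]_{A})=0$, i.e. $[s_{a},s_{b}]_{A}\in E$ by (ii).

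Finally I would prove that "$d^{A}\theta ^{\alpha }$ vanishes on $E\times E$ for every $\alpha $" is equivalent to the existence of $\omega _{\beta }^{\alpha }\in \Lambda ^{1}(A)$ with $d^{A}\theta ^{\alpha }=\Sigma _{\beta \in \overline{r+1,p}}\omega _{\beta }^{\alpha }\wedge \theta ^{\beta }$. The direction ``$\Leftarrow$'' is immediate: for $u,v\in E$ each term $(\omega _{\beta }^{\alpha }\wedge \theta ^{\beta })(u,v)=\omega _{\beta }^{\alpha }(u)\theta ^{\beta }(v)-\omega _{\beta }^{\alpha }(v)\theta ^{\beta }(u)=0$. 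For ``$\Rightarrow$'' I would expand $d^{A}\theta ^{\alpha }$ in the basis $\{s^{\mu }\wedge s^{\nu }:\mu <\nu \}$ of $\Lambda ^{2}(A)$; its coefficient on $s^{\mu }\wedge s^{\nu }$ is $d^{A}\theta ^{\alpha }(s_{\mu },s_{\nu })$, which vanishes whenever $\mu ,\nu \le r$. Hence every surviving monomial contains some $s^{\nu }$ with $\nu \ge r+1$, and grouping each such monomial with the largest index appearing gives $d^{A}\theta ^{\alpha }=\Sigma _{\nu \in \overline{r+1,p}}\eta _{\nu }^{\alpha }\wedge s^{\nu }$ with $\eta _{\nu }^{\alpha }\in \Lambda ^{1}(A)$; substituting $s^{\nu }=(a^{-1})_{\beta }^{\nu }\theta ^{\beta }$ and pulling scalars through $\wedge$ (as permitted by Theorem on the exterior product) produces the claimed expression with $\omega _{\beta }^{\alpha }=(a^{-1})_{\beta }^{\nu }\eta _{\nu }^{\alpha }$. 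The only delicate point is the index bookkeeping in this regrouping — each wedge $s^{\mu }\wedge s^{\nu }$ must be assigned to exactly one $\eta _{\nu }^{\alpha }$ — together with the implicit use of the finite-free-module fact that $E$ admits a complement, so that the dual bases above exist and $\{s^{r+1},\dots ,s^{p}\}$ spans $E^{0}$; everything else is formal.
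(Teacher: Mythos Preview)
Your argument is correct and follows essentially the same route as the paper: extend a basis of $E$ to one of $A$, use the formula $d^{A}\theta^{\alpha}(s_b,s_c)=-\theta^{\alpha}([s_b,s_c]_A)$ to identify involutivity with vanishing of the $s^a\wedge s^b$-coefficients ($a,b\le r$) of $d^A\theta^\alpha$, and then regroup the remaining terms. The only cosmetic difference is that the paper extends the given $\theta^{r+1},\dots,\theta^{p}$ directly to a full cobasis dual to $s_1,\dots,s_p$, whereas you build the dual cobasis $s^1,\dots,s^p$ first and pass through an explicit change-of-basis matrix $(a^\alpha_\beta)$; both are equivalent bookkeeping.
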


\begin{proof}
Let $\{ s_{1},\cdots ,s_{r}\} $ be a basis for the $\mathcal{F}$-submodule $%
E$ and we suppose that $s_{r+1},\cdots ,s_{p}\in A$ such that 
\begin{equation*}
\{ s_{1},\cdots ,s_{r},s_{r+1},\cdots ,s_{p}\} ,
\end{equation*}%
is a basis for the $\mathcal{F}$-module $A$. Also, let $\theta
^{1},\cdots ,\theta ^{r}\in A^{\ast }$ such that 
\begin{equation*}
\left\{ \theta ^{1},\cdots ,\theta ^{r},\theta ^{r+1},\cdots ,\theta
^{p}\right\} ,
\end{equation*}%
be a basis for the $\mathcal{F}$-module $A^{\ast }$. For any $%
a,b\in \overline{1,r}$ and $\alpha ,\beta \in \overline{r+1,p}$, we have the
equalities:%
\begin{equation*}
\theta ^{a}(s_{b})=\delta _{b}^{a},\ \ \theta ^{a}(s_{\beta })=0,\ \ \theta
^{\alpha }(s_{b})=0,\ \ \theta ^{\alpha }(s_{\beta })=\delta _{\beta
}^{\alpha }.
\end{equation*}%
We remark that the set of the 2-forms 
\begin{equation*}
\left\{ \theta ^{a}\wedge \theta ^{b},\theta ^{a}\wedge \theta ^{\beta
},\theta ^{\alpha }\wedge \theta ^{\beta },~a,b\in \overline{1,r}\wedge
\alpha ,\beta \in \overline{r+1,p}\right\} ,
\end{equation*}%
is a base for the $\mathcal{F}$-module $\Lambda ^{2}(A)$.
Therefore, we have%
\begin{equation}
d^{A}\theta ^{\alpha }=\Sigma _{b<c}A_{bc}^{\alpha }\theta ^{b}\wedge \theta
^{c}+\Sigma _{b,\gamma }B_{b\gamma }^{\alpha }\theta ^{b}\wedge \theta
^{\gamma }+\Sigma _{\beta <\gamma }C_{\beta \gamma }^{\alpha }\theta ^{\beta
}\wedge \theta ^{\gamma },  \label{A1}
\end{equation}%
where, $A_{bc}^{\alpha },B_{b\gamma }^{\alpha }$ and $C_{\beta \gamma
}^{\alpha },~a,b,c\in \overline{1,r}\wedge \alpha ,\beta ,\gamma \in 
\overline{r+1,p}$ are components such that $A_{bc}^{\alpha }=-A_{cb}^{\alpha
}$ and $C_{\beta \gamma }^{\alpha }=-C_{\gamma \beta }^{\alpha }$. Using the
formula 
\begin{equation}
d^{A}\theta ^{\alpha }(s_{b},s_{c})=\rho (s_{b})(\theta ^{\alpha
}(s_{c}))-\rho (s_{c})(\theta ^{\alpha }(s_{b}))-\theta ^{\alpha
}([s_{b},s_{c}]_{A}),  \label{A2}
\end{equation}%
we obtain 
\begin{equation}
A_{bc}^{\alpha }=-\theta ^{\alpha }([s_{b},s_{c}]_{A}),~\forall (b,c\in 
\overline{1,r}\wedge \alpha \in \overline{r+1,p}).  \label{A3}
\end{equation}%
We admit that $E$ is an involutive IAS of the generalized Lie $%
\mathcal{F}$-algebra $(A,[,]_{A},\rho )$. Since $%
[s_{b},s_{c}]_{A}\in E,~\forall b,c\in \overline{1,r}$, then it results that 
$\theta ^{\alpha }([s_{b},s_{c}]_{A})=0$, where $\alpha \in \overline{r+1,p}$%
. Therefore $A_{bc}^{\alpha }=0$, and consequently 
\begin{equation*}
\begin{array}{ccl}
d^{A}\theta ^{\alpha } & = & \Sigma _{b,\gamma }B_{b\gamma }^{\alpha }\theta
^{b}\wedge \theta ^{\gamma }+\frac{1}{2}C_{\beta \gamma }^{\alpha }\theta
^{\beta }\wedge \theta ^{\gamma } \\ 
& = & (B_{b\gamma }^{\alpha }\theta ^{b}+\frac{1}{2}C_{\beta \gamma
}^{\alpha }\theta ^{\beta })\wedge \theta ^{\gamma },\ \ \ \forall \alpha
,\beta ,\gamma \in \overline{r+1,p}.%
\end{array}%
\end{equation*}%
Setting 
\begin{equation*}
\omega _{\gamma }^{\alpha }\overset{put}{=}B_{b\gamma }^{\alpha }\theta ^{b}+%
\frac{1}{2}C_{\beta \gamma }^{\alpha }\theta ^{\beta }\in \Lambda ^{1}(A),
\end{equation*}%
in the above equation the necessity condition of assertion proves. Conversely, we admit that there exist $\omega _{\beta }^{\alpha }\in \Lambda
^{1}(A)$, $\alpha ,\beta \in \overline{r+1,p}$, such that 
\begin{equation}  \label{A4}
d^{A}\theta ^{\alpha }=\Sigma _{\beta \in \overline{r+1,p}}\omega _{\beta
}^{\alpha }\wedge \theta ^{\beta }.
\end{equation}
Using the affirmations (\ref{A1}), (\ref{A2}) and (\ref{A4}) we derive that 
\begin{equation*}
A_{bc}^{\alpha }=0,~\forall b,c\in \overline{1,r}, \ \ \forall\alpha \in 
\overline{r+1,p}.
\end{equation*}
Thus using (\ref{A3}), we obtain 
\begin{equation*}
\theta ^{\alpha }( [ s_{b},s_{c}] _{A}) =0,~\forall ( b,c\in \overline{1,r}%
\wedge \alpha \in \overline{r+1,p}),
\end{equation*}
which gives us 
\begin{equation*}
[ s_{b},s_{c}] _{A}\in E,~\forall b,c\in \overline{1,r}.
\end{equation*}
Therefore from previous proposition we deduce that $E$ is
involutive.
\end{proof}

\begin{corollary}
\label{CFrob} \textbf{(}of Frobenius type)\textbf{\ } Let $( E,\pi ,M) $\ be
an IDS of the generalized Lie algebroid $( ( F,\nu ,N) ,[ ,] _{F,h},( \rho
,\eta ) ) .$\ If for any $( U,\xi _{U}) \in [ \mathcal{A}_{M}] $, there
exists the basis $\left\{ \Theta ^{r+1},...,\Theta ^{p}\right\} $ for the $%
\mathcal{F}( M) _{\mid U}$-submodule $\Gamma ( E_{\mid U}^{0},\pi ^{0},U)
 $, then $( E,\pi ,M) $ is involutive if and only if there exist 
\begin{equation*}
\Omega _{\beta }^{\alpha }\in \Lambda ^{1}( h^{\ast }F_{\mid U},h^{\ast }\nu
,U) ,~\alpha ,\beta \in \overline{r+1,p}
\end{equation*}%
such that 
\begin{equation*}
d^{h^{\ast }F}\Theta ^{\alpha }=\Sigma _{\beta \in \overline{r+1,p}}\Omega
_{\beta }^{\alpha }\wedge \Theta ^{\beta },~\alpha \in \overline{r+1,p}.
\end{equation*}
\end{corollary}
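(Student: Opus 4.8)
The plan is to deduce this corollary from Theorem \ref{Frob} by passing to the pull-back Lie algebroid and arguing chart by chart. First I would recall, from the construction of the pull-back Lie algebroid in Section 3, that for every $(U,\xi_{U})\in[\mathcal{A}_{M}]$ the $4$-tuple $(\Gamma(h^{\ast}F_{\mid U},h^{\ast}\nu,U),[,]_{h^{\ast}F_{\mid U}},\Gamma(\overset{h^{\ast}F}{\rho},Id_{M}))$ is a generalized Lie $\mathcal{F}(M)_{\mid U}$-algebra, of $\mathcal{F}(M)_{\mid U}$-dimension $p$. Since $(E,\pi,M)$ is a vector subbundle of $(h^{\ast}F,h^{\ast}\nu,M)$, the module $\Gamma(E_{\mid U},\pi,U)$ is an $\mathcal{F}(M)_{\mid U}$-submodule of $\Gamma(h^{\ast}F_{\mid U},h^{\ast}\nu,U)$, hence an IAS of this generalized Lie algebra; moreover, by the very definition of the annihilator vector subbundle, $\Gamma(E_{\mid U}^{0},\pi^{0},U)=\langle\Theta^{r+1},\dots,\Theta^{p}\rangle$ is precisely the annihilator submodule of that IAS in the sense of Section 5.

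Next I would record two compatibility facts. First, involutivity is a local notion: by the proposition characterising involutivity of an IDS through a local basis, $(E,\pi,M)$ is involutive if and only if $[S_{a},S_{b}]_{h^{\ast}F_{\mid U}}\in\Gamma(E_{\mid U},\pi,U)$ for all $a,b\in\overline{1,r}$ and all $(U,\xi_{U})\in[\mathcal{A}_{M}]$, i.e. if and only if $\Gamma(E_{\mid U},\pi,U)$ is an involutive IAS of the above generalized Lie $\mathcal{F}(M)_{\mid U}$-algebra for every $U$. Second, the exterior differentiation operator $d^{h^{\ast}F}$ of the exterior differential $\mathcal{F}(M)$-algebra $(\Lambda(h^{\ast}F,h^{\ast}\nu,M),\wedge)$ restricts on $U$ to the exterior differentiation operator $d^{h^{\ast}F_{\mid U}}$ of the generalized Lie $\mathcal{F}(M)_{\mid U}$-algebra $(\Gamma(h^{\ast}F_{\mid U},h^{\ast}\nu,U),[,]_{h^{\ast}F_{\mid U}})$; this follows from the local expression (\ref{29}) for $d^{A}$ together with the uniqueness part of Theorem \ref{Th34}.

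With these in hand, the corollary is a chart-by-chart application of Theorem \ref{Frob}: for each $U$, taking $A=\Gamma(h^{\ast}F_{\mid U},h^{\ast}\nu,U)$, $E=\Gamma(E_{\mid U},\pi,U)$ and the basis $\{\Theta^{r+1},\dots,\Theta^{p}\}$ of $E^{0}$, the theorem gives that $\Gamma(E_{\mid U},\pi,U)$ is involutive if and only if there exist $\Omega_{\beta}^{\alpha}\in\Lambda^{1}(h^{\ast}F_{\mid U},h^{\ast}\nu,U)$, $\alpha,\beta\in\overline{r+1,p}$, with $d^{h^{\ast}F}\Theta^{\alpha}=\Sigma_{\beta\in\overline{r+1,p}}\Omega_{\beta}^{\alpha}\wedge\Theta^{\beta}$ for $\alpha\in\overline{r+1,p}$. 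Combining this with the locality of involutivity recorded above yields the assertion.

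I expect the only real obstacle to be bookkeeping rather than mathematics: one must check carefully that the restriction maps $\Gamma(h^{\ast}F,h^{\ast}\nu,M)\to\Gamma(h^{\ast}F_{\mid U},h^{\ast}\nu,U)$ intertwine the global bracket $[,]_{h^{\ast}F}$ with the local brackets $[,]_{h^{\ast}F_{\mid U}}$, that the global annihilator $(E^{0},\pi^{0},M)$ localises to the annihilator submodules $\Gamma(E_{\mid U}^{0},\pi^{0},U)$, and that $d^{h^{\ast}F}$ is compatible with restriction to $U$. Once these identifications are made explicit, no further computation is required and the content is entirely carried by Theorem \ref{Frob}.
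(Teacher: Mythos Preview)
Your proposal is correct and follows exactly the approach the paper intends: the corollary is stated without proof precisely because it is the chart-by-chart specialization of Theorem \ref{Frob} to the local generalized Lie $\mathcal{F}(M)_{\mid U}$-algebras of the pull-back Lie algebroid, using the locality of involutivity recorded in the preceding proposition. Your bookkeeping remarks (compatibility of brackets, annihilators, and $d^{h^{\ast}F}$ with restriction) are the only points one must verify, and they are implicit in the paper's construction of the pull-back Lie algebroid and the exterior differential calculus in Sections 3 and 4.
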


\begin{definition}
An $\mathcal{F}$-submodule $\mathcal{I} $ of the Lie $\mathcal{F}$%
-algebra $( A,[ ,] _{A}) $ such that $[ u,v] _{A}\in \mathcal{I},$
for any $u\in A$ and $v\in \mathcal{I}$, is called the ideal of the Lie $%
\mathcal{F}$-algebra $( A,[ ,] _{A})$.
\end{definition}

\begin{remark}
If $E $ is a $\mathcal{F}$-submodule of the Lie $\mathcal{F}$%
-algebra $( A,[ ,] _{A}) $ and%
\begin{equation*}
\mathcal{I}( E) \overset{put}{=}\underset{\mathcal{I~}\supseteq ~E}{\underset%
{\mathcal{I}=~ideal}{\bigcap }}\mathcal{I},
\end{equation*}%
then $ \mathcal{I}( E)  $ is an ideal of the Lie $\mathcal{F}$%
-algebra $( A,[ ,] _{A}) $ which is called the ideal generated by
the $\mathcal{F}$-submodule $ E .$
\end{remark}
\begin{definition}
An ideal $\mathcal{I} $ of the exterior algebra of the
generalized Lie $\mathcal{F}$-algebra $( A,[ ,] _{A},\rho ) $
closed under differentiation operator $d^{A},$ namely $d^{A}\mathcal{%
I\subseteq I},$ is called a \emph{differential ideal of the generalized
Lie }$\mathcal{F}$-\emph{algebra }$( A,[ ,] _{A},\rho ) .$
\end{definition}
Let $\mathcal{I}$ be a differential ideal of the generalized Lie $%
\mathcal{F}$-algebra $(A,[,]_{A},\rho )$. If there exists an \emph{%
interior algebraic system }$E$ such that for all $k\in \mathbb{N}%
^{\ast }$ and $\omega \in \mathcal{I}\cap \Lambda ^{k}(A)$ we have $\omega
(u_{1},\cdots ,u_{k})=0,$ for any $u_{1},\cdots ,u_{k}\in E,$ then we will
say that $\mathcal{I}$\emph{\ is an exterior algebraic system of
the generalized Lie }$\mathcal{F}$-algebra $(A,[,]_{A},\rho ).$

\begin{definition}
Any ideal $\mathcal{I} $ of the exterior differential algebra of
the pull-back Lie algebroid 
\begin{equation*}
( ( h^{\ast }F,h^{\ast }\nu ,M) ,[ ,] _{h^{\ast }F},( \overset{h^{\ast }F}{%
\rho },Id_{M}) )
\end{equation*}%
closed under differentiation operator $d^{h^{\ast }F},$ namely $d^{h^{\ast
}F}\mathcal{I\subseteq I},$ will be called a differential ideal of the
generalized Lie algebroid\emph{\ }$( ( F,\nu ,N) ,[ ,] _{F,h},( \rho ,\eta )
) .$
\end{definition}
Let $\mathcal{I} $ be a differential ideal of the generalized
Lie algebroid $( ( F,\nu ,N) ,[ ,] _{F,h},( \rho ,\eta ) ) $. If there
exists an IDS\emph{\ }$( E,\pi ,M) $ such that for all $k\in \mathbb{N}%
^{\ast }$ and $\omega \in \mathcal{I}\cap \Lambda ^{k}( h^{\ast }F,h^{\ast
}\nu ,M) $ we have $\omega ( u_{1},...,u_{k}) =0,$ for any $%
u_{1},...,u_{k}\in \Gamma ( E,\pi ,M) ,$ then we will say that $ \mathcal{I}%
 $\emph{\ is an exterior differential system of the generalized
Lie algebroid\ }%
\begin{equation*}
( ( F,\nu ,N) ,[ ,] _{F,h},( \rho ,\eta ) ) .
\end{equation*}
In particular, if $h=Id_{N}=\eta $, then we obtain the definition of the EDS
of a Lie algebroid (see$[ 5] $).

\begin{theorem}
\label{Cartan} (Cartan type) The interior algebraic system $E$ of
the generalized Lie $\mathcal{F}$-algebra $(A,[,]_{A},\rho )$ is
involutive, if and only if the ideal generated by the $\mathcal{F}$%
-submodule $E^{0}$ is an exterior algebraic system of the same
generalized Lie $\mathcal{F}$-algebra $(A,[,]_{A},\rho )$.
\end{theorem}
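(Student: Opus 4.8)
The plan is to deduce the equivalence from the Frobenius-type characterization of Theorem \ref{Frob}, after a reduction. Fix a basis $\{\theta^{r+1},\cdots,\theta^{p}\}$ of the annihilator submodule $E^{0}$. First I would record the concrete description of the ideal: the ideal $\mathcal{I}(E^{0})$ generated by $E^{0}$ in the exterior algebra $(\Lambda(A),\wedge)$ is exactly the set of finite sums $\sum_{\beta}\omega_{\beta}\wedge\theta^{\beta}$ with $\omega_{\beta}\in\Lambda(A)$, $\beta\in\overline{r+1,p}$ (a routine consequence of graded-commutativity and $\mathcal{F}$-bilinearity of $\wedge$ together with the fact that the $\theta^{\beta}$ span $E^{0}$). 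From this I would observe that for every $\mu\in\mathcal{I}(E^{0})\cap\Lambda^{k}(A)$ one has $\mu(u_{1},\cdots,u_{k})=0$ whenever $u_{1},\cdots,u_{k}\in E$: this is immediate from the expansion (\ref{form1}) of the exterior product together with $\theta^{\beta}(u)=0$ for $u\in E$. Hence the vanishing clause in the definition of an exterior algebraic system is automatically satisfied, with $E$ itself as the distinguished interior algebraic system; so $\mathcal{I}(E^{0})$ is an exterior algebraic system of $(A,[,]_{A},\rho)$ if and only if it is a differential ideal, i.e.\ $d^{A}\mathcal{I}(E^{0})\subseteq\mathcal{I}(E^{0})$. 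This reduction is the first step.

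Next I would show that $d^{A}\mathcal{I}(E^{0})\subseteq\mathcal{I}(E^{0})$ is equivalent to the finitely many membership conditions $d^{A}\theta^{\alpha}\in\mathcal{I}(E^{0})$ for $\alpha\in\overline{r+1,p}$. One direction is trivial. For the other, take $\mu=\sum_{\beta}\omega_{\beta}\wedge\theta^{\beta}\in\mathcal{I}(E^{0})$, split each $\omega_{\beta}$ into homogeneous components, and apply the Leibniz rule of Theorem \ref{38}(i): for $\omega_{\beta}$ homogeneous, $d^{A}(\omega_{\beta}\wedge\theta^{\beta})=d^{A}\omega_{\beta}\wedge\theta^{\beta}+(-1)^{\deg\omega_{\beta}}\,\omega_{\beta}\wedge d^{A}\theta^{\beta}$; the first term lies in $\mathcal{I}(E^{0})$ because $\theta^{\beta}\in E^{0}$, and the second lies in $\mathcal{I}(E^{0})$ because $d^{A}\theta^{\beta}\in\mathcal{I}(E^{0})$. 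Summing over $\beta$ gives $d^{A}\mu\in\mathcal{I}(E^{0})$.

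Finally I would connect $d^{A}\theta^{\alpha}\in\mathcal{I}(E^{0})$ with the structure relations of Theorem \ref{Frob}. If $d^{A}\theta^{\alpha}\in\mathcal{I}(E^{0})$, write $d^{A}\theta^{\alpha}=\sum_{\beta}\Omega_{\beta}\wedge\theta^{\beta}$ with $\Omega_{\beta}\in\Lambda(A)$; comparing homogeneous degrees, since $d^{A}\theta^{\alpha}\in\Lambda^{2}(A)$ and $\theta^{\beta}\in\Lambda^{1}(A)$, only the degree-$1$ component $\omega_{\beta}^{\alpha}$ of $\Omega_{\beta}$ survives, so $d^{A}\theta^{\alpha}=\Sigma_{\beta\in\overline{r+1,p}}\omega_{\beta}^{\alpha}\wedge\theta^{\beta}$ with $\omega_{\beta}^{\alpha}\in\Lambda^{1}(A)$; the converse implication is obvious. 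By Theorem \ref{Frob}, the existence of such $1$-forms $\omega_{\beta}^{\alpha}$ is equivalent to involutivity of $E$. Composing the equivalences yields: $E$ involutive $\iff$ $d^{A}\theta^{\alpha}\in\mathcal{I}(E^{0})$ for all $\alpha\in\overline{r+1,p}$ $\iff$ $d^{A}\mathcal{I}(E^{0})\subseteq\mathcal{I}(E^{0})$ $\iff$ $\mathcal{I}(E^{0})$ is an exterior algebraic system of $(A,[,]_{A},\rho)$, which is the assertion. The main obstacle here is not any single hard step but the bookkeeping: correctly identifying $\mathcal{I}(E^{0})$, handling non-homogeneous coefficients in the Leibniz computation, and the degree-truncation in the last step; the substantive content of the result is already packaged in Theorem \ref{Frob}.
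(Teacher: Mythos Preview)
Your proposal is correct and follows essentially the same approach as the paper: both reduce the statement to the Frobenius-type Theorem \ref{Frob} via the Leibniz rule (Theorem \ref{38}(i)) applied to the concrete description of $\mathcal{I}(E^{0})$ as sums $\sum_{\beta}\omega_{\beta}\wedge\theta^{\beta}$. Your write-up is in fact slightly more careful than the paper's, in that you explicitly justify the reduction from ``exterior algebraic system'' to ``differential ideal'' (the vanishing clause is automatic) and you spell out the degree-truncation needed to extract $1$-forms $\omega_{\beta}^{\alpha}$ from an arbitrary expression $d^{A}\theta^{\alpha}=\sum_{\beta}\Omega_{\beta}\wedge\theta^{\beta}$; the paper glosses over both points.
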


\begin{proof}
Let $E$ be an involutive interior algebraic system of the
generalized Lie $\mathcal{F}$-algebra $(A,[,]_{A},\rho )$ and let $%
\left\{ \theta ^{r+1},\cdots ,\theta ^{p}\right\} $ be a basis for the $%
\mathcal{F}$-submodule $E^{0}$. We know that 
\begin{equation*}
\mathcal{I}(E^{0})=\cup _{q\in \mathbb{N}}\left\{ \omega _{\alpha }\wedge
\theta ^{\alpha },~\left\{ \omega _{r+1},\cdots ,\omega _{p}\right\} \subset
\Lambda ^{q}(A)\right\} .
\end{equation*}
Let $q\in \mathbb{N}$ and $\left\{ \omega _{r+1},\cdots ,\omega _{p}\right\}
\subset \Lambda ^{q}(A)$. Using Theorems \ref{38} and \ref{Frob} we obtain 
\begin{equation*}
\begin{array}{ccl}
d^{A}(\omega _{\alpha }\wedge \theta ^{\alpha }) & = & d^{A}\omega _{\alpha
}\wedge \theta ^{\alpha }+(-1)\omega _{\beta }^{q+1}\wedge d^{A}\theta
^{\beta } \\ 
& = & (d^{A}\omega _{\alpha }+(-1)\omega _{\beta }^{q+1}\wedge \omega
_{\alpha }^{\beta })\wedge \theta ^{\alpha }.%
\end{array}%
\end{equation*}
Since 
\begin{equation*}
d^{A}\omega _{\alpha }+(-1)\omega _{\beta }^{q+1}\wedge \omega _{\alpha
}^{\beta }\in \Lambda ^{q+2}(A),
\end{equation*}%
then we get $d^{A}(\omega _{\beta }\wedge \theta ^{\beta })\in \mathcal{I}%
(E^{0})$, and consequently $d^{A}\mathcal{I}(E^{0})\subseteq \mathcal{I}%
(E^{0})$.

Conversely, let $E$ be an interior algebraic system of the
generalized Lie $\mathcal{F}$-algebra $(A,[,]_{A},\rho )$ such that
the $\mathcal{F}$-submodule $\mathcal{I}(E^{0})$ is an exterior
algebraic system of the generalized Lie $\mathcal{F}$-algebra $(A,[,]_{A},\rho )$. Suppose that $\left\{ \theta ^{r+1},\cdots ,\theta
^{p}\right\} $ is a basis for the $\mathcal{F}$-submodule $E^{0}$.
Since $d^{A}\mathcal{I}(E^{0})\subseteq \mathcal{I}(E^{0})$, then there
exist $\omega _{\beta }^{\alpha }\in \Lambda ^{1}(A),~\alpha ,\beta \in 
\overline{r+1,p}$, such that 
\begin{equation*}
d^{A}\theta ^{\alpha }=\Sigma _{\beta \in \overline{r+1,p}}\omega _{\beta
}^{\alpha }\wedge \theta ^{\beta }\in \mathcal{I}(E^{0}).
\end{equation*}%
Using Theorem \ref{Frob}, it results that $E$ is an involutive
interior algebraic system of the generalized Lie $\mathcal{F}$-algebra\emph{%
\ }$(A,[,]_{A},\rho )$\emph{.}
\end{proof}

\begin{corollary}
\label{CCartan}  The interior differential system $( E,\pi ,M) $ of the
generalized Lie algebroid $$( ( F,\nu ,N) ,[ ,] _{F,h},( \rho ,\eta ) ),$$ is
involutive, if and only if for any $( U,\xi _{U}) \in [ \mathcal{A}_{M}] $
 the ideal generated by the $\mathcal{F}( M) _{\mid
U} $-submodule $\Gamma ( E_{\mid U}^{0},\pi ^{0},U)  $ is an
EDS of the same generalized Lie algebroid.
\end{corollary}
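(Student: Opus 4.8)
The plan is to deduce Corollary \ref{CCartan} from Theorem \ref{Cartan} by localisation, exactly as Corollary \ref{CFrob} is deduced from Theorem \ref{Frob}. The underlying observation is that every object appearing in the statement — involutivity of $(E,\pi,M)$, the ideal generated by $\Gamma(E^0_{\mid U},\pi^0,U)$, and the EDS property — is local in $M$, and over each chart domain the ambient structure is a bona fide generalized Lie $\mathcal{F}(M)_{\mid U}$-algebra to which Theorem \ref{Cartan} applies word for word.

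Concretely, I would first fix $(U,\xi_U)\in[\mathcal{A}_M]$ and recall from the example constructing the pull-back Lie algebroid that $A_U:=(\Gamma(h^{\ast }F_{\mid U},h^{\ast }\nu,U),[,]_{h^{\ast }F_{\mid U}},\Gamma(\overset{h^{\ast }F}{\rho },Id_M))$ is a $p$-dimensional generalized Lie $\mathcal{F}(M)_{\mid U}$-algebra. Since $(E,\pi,M)$ is a vector subbundle of $(h^{\ast }F,h^{\ast }\nu,M)$, the module $\Gamma(E_{\mid U},\pi,U)$ is an $\mathcal{F}(M)_{\mid U}$-submodule of $A_U$, hence an IAS of $A_U$, and by the remark defining $\Gamma(E^0,\pi^0,M)$ its annihilator is precisely $\Gamma(E^0_{\mid U},\pi^0,U)$. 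Next I would invoke the preceding proposition characterizing involutivity of an IDS locally, together with the basis criterion for involutivity of an IAS, to reduce the claim to: $(E,\pi,M)$ is involutive $\iff$ for every $U$ the IAS $\Gamma(E_{\mid U},\pi,U)$ of $A_U$ is involutive. Theorem \ref{Cartan} applied to each $A_U$ then says that $\Gamma(E_{\mid U},\pi,U)$ is involutive $\iff$ the ideal $\mathcal{I}(\Gamma(E^0_{\mid U},\pi^0,U))$ inside $\Lambda(h^{\ast }F_{\mid U},h^{\ast }\nu,U)$ is an exterior algebraic system of $A_U$. Finally, since the exterior differential calculus of the generalized Lie algebroid is by definition that of its generalized Lie $\mathcal{F}(N)$-algebra and $d^{h^{\ast }F}$ acts locally through the structure functions of the pull-back Lie algebroid, the exterior differentiation operator of $A_U$ is the restriction of $d^{h^{\ast }F}$ to $U$; hence "$\mathcal{I}(\Gamma(E^0_{\mid U},\pi^0,U))$ is an EAS of $A_U$ for every $U$" is the same statement as "$\mathcal{I}(\Gamma(E^0_{\mid U},\pi^0,U))$ is an EDS of $((F,\nu,N),[,]_{F,h},(\rho,\eta))$", with $(E,\pi,M)$ playing the role of the IDS in that definition. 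Chaining the three equivalences gives both directions; for the converse one simply notes that an EDS is $d^{h^{\ast }F}$-closed, so a local basis $\{\Theta^{r+1},\dots,\Theta^p\}$ of $\Gamma(E^0_{\mid U},\pi^0,U)$ satisfies $d^{h^{\ast }F}\Theta^\alpha=\Sigma_{\beta\in\overline{r+1,p}}\Omega^\alpha_\beta\wedge\Theta^\beta$ for suitable $\Omega^\alpha_\beta\in\Lambda^1(h^{\ast }F_{\mid U},h^{\ast }\nu,U)$, whence $\Gamma(E_{\mid U},\pi,U)$ is involutive by Theorem \ref{Cartan} (equivalently Corollary \ref{CFrob}) and $(E,\pi,M)$ is involutive by the local criterion.

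The hard part is not any single computation but the bookkeeping underlying the last two steps: one must check that $d^{h^{\ast }F}$ is compatible with restriction to the chart domains, so that a $d^{h^{\ast }F}$-closed ideal on $M$ restricts to a $d^{h^{\ast }F}$-closed ideal of each $\Lambda(h^{\ast }F_{\mid U},h^{\ast }\nu,U)$ and, conversely, assembles back, and that both the ideal-generation and the degreewise annihilation condition are local in $M$. These are routine consequences of the sheaf-like constructions of Sections 3--4 — $(u+v)_{\mid U}=u_{\mid U}+v_{\mid U}$, $(f\cdot u)_{\mid U}=f_{\mid U}\cdot u_{\mid U}$, and $d^{h^{\ast }F}$ being expressed locally through the structure functions — but they are exactly what legitimises the descent from the purely algebraic Theorem \ref{Cartan} to the algebroid statement. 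Once they are in place, no further work is needed.
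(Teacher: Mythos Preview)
Your proposal is correct and matches the paper's intent: the paper states Corollary \ref{CCartan} without proof, treating it as the immediate localization of Theorem \ref{Cartan} in exactly the same way that Corollary \ref{CFrob} is the localization of Theorem \ref{Frob}. Your careful bookkeeping on the compatibility of $d^{h^{\ast}F}$ with restriction and the sheaf-like behaviour of the module and ideal constructions simply makes explicit what the paper leaves to the reader.
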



\section{New directions by research}

We know that a generalized Lie $\mathcal{F}$-algebras morphism from $(A,[,]_{A},\rho )$ to $(A^{\prime },[,]_{A^{\prime }},\rho
^{\prime })$ is a Lie $\mathcal{F}$-algebras morphism $\varphi $ from $%
(A,[,]_{A}~)$ to $(A^{\prime },[,]_{A^{\prime }})$ such
that $\rho ^{\prime }\circ \varphi =\rho $.

\begin{definition}
An \emph{algebraic symplectic }$\mathcal{F}$\emph{-space} is a pair $%
((A,[,]_{A},\rho ),\omega )$ consisting by a generalized Lie $%
\mathcal{F}$-algebra $(A,[,]_{A},\rho )$ and a nondegenerate closed 
$2$-form $\omega \in \Lambda ^{2}(A).$
\end{definition}

If $((A^{\prime },[,]_{A^{\prime }},\rho ^{\prime }),\omega
^{\prime })$ is an another generalized symplectic $\mathcal{F}$-space, then
we can define the set of morphisms from $((A,[,]_{A},\rho ),\omega
) $ to $((A^{\prime },[,]_{A^{\prime }},\rho ^{\prime }),\omega
^{\prime })$ as being the set of generalized Lie $\mathcal{F}$-algebras
morphisms $\varphi $ such that $\varphi ^{\ast }(\omega ^{\prime })=\omega $. So, we can discuss about the category of algebraic symplectic $\mathcal{F}$%
-spaces as being a subcategory of the category of generalized Lie $\mathcal{F%
}$-algebras. An algebraic study of objects of this category is a new
direction by research.

It is known that the set of morphisms from $( ( F,\nu ,N) ,[ ,] _{F,h},( \rho
,\eta ) ) $ to $( ( F^{\prime },\nu ^{\prime },N^{\prime }) ,[ ,] _{F^{\prime },h^{\prime
}},( \rho ^{\prime },\eta ^{\prime }) ) $ is the set of vector bundles
morphisms $( \varphi ,\varphi _{0}) $ from $( F,\nu ,N) $ to $( F^{\prime
},\nu ^{\prime },N^{\prime }) $ such that $\varphi _{0}$ is diffeomorphism
and the modules morphism $\Gamma ( \varphi ,\varphi _{0}) $ is a Lie $%
\mathcal{F}( N) $-algebras morphism from $( \Gamma ( F,\nu ,N) ,[
,] _{F,h}) $ to $( \Gamma ( F^{\prime },\nu ^{\prime },N^{\prime }), [ ,] _{F^{\prime },h^{\prime }}) $ (see \cite{A4}). We can define \emph{%
the differential symplectic space }as being a pair 
\begin{equation*}
\begin{array}{c}
( ( ( F,\nu ,N) ,[ ,] _{F,h},( \rho ,\eta ) ) ,\omega ),
\end{array}%
\end{equation*}%
consisting of a generalized Lie algebroid $( ( F,\nu ,N) ,[ ,] _{F,h},( \rho
,\eta ) ) $ and a nondegenerate closed $2$-form $\omega \in \Lambda ^{2}(
F,\nu ,N) .$ If $( ( ( F^{\prime },\nu ^{\prime },N^{\prime }) ,[ ,] _{F^{\prime },h^{\prime
}},( \rho ^{\prime },\eta ^{\prime }) ) ,\omega ^{\prime })$, 
is an another differential symplectic space, then we can define the set of
morphisms from $( ( ( F,\nu ,N) ,[ ,] _{F,h},( \rho ,\eta ) ) ,\omega ) $ to 
$( ( ( F^{\prime },\nu ^{\prime },N^{\prime }) ,[ ,] _{F^{\prime },h^{\prime
}},( \rho ^{\prime },\eta ^{\prime }) ) ,\omega ^{\prime }) $ as being the
set of generalized Lie algebroids morphisms $( \varphi ,\varphi _{0}) $ such
that 
\begin{equation*}
( \varphi ,\varphi _{0}) ^{\ast }( \omega ^{\prime }) =\omega .
\end{equation*}
So, we can discuss about the category of differential symplectic spaces as
being a subcategory of the category of generalized Lie algebroids. The study
of the geometry of objects of this category is an another direction by
research.

\section{Acknowledgment}

\addcontentsline{toc}{section}{Acknowledgment}

The first author would like to thank R\u{a}dine\c{s}ti-Gorj Cultural
Scientifique Society for financial support. In memory of Prof. Dr. Gheorghe
RADU and Acad. Dr. Doc. Cornelius RADU.

\bigskip \addcontentsline{toc}{section}{References}

\end{document}